\documentclass[11pt,a4paper,amsthm]{amsart}
\usepackage{amsthm}
\usepackage{amssymb}
\usepackage{amsmath}
\usepackage{tikz}
\usepackage{cite}
\usepackage{url}
\usepackage{a4wide}

\DeclareMathOperator{\ro}{r}
\DeclareMathOperator{\EE}{EE}
\DeclareMathOperator{\E}{E}
\DeclareMathOperator{\ssum}{sum}
\DeclareMathOperator{\M}{M}
\DeclareMathOperator{\R}{\mathbb{R}}
\DeclareMathOperator{\tr}{tr}

\allowdisplaybreaks

\newtheorem{conj}{Conjecture}
\newtheorem{thm}{Theorem}
\newtheorem{lem}[thm]{Lemma}
\newtheorem{cor}[thm]{Corollary}
\theoremstyle{remark}
\newtheorem{rem}{Remark}
\newtheorem{defn}{Definition}

\begin{document}

\thispagestyle{plain}

\title{Spectral moments of trees with given degree sequence}

\author{Eric Ould Dadah Andriantiana}
\address{Eric Ould Dadah Andriantiana\\
Department of Mathematical Sciences\\
Stellenbosch University\\
Private Bag X1 \\
Matieland 7602\\
South Africa
}
\email{ericoda@sun.ac.za}
\thanks{Supported by the German Academic Exchange
Service (DAAD), in association with the African Institute for Mathematical Sciences
(AIMS). Code No.: A/11/07858.}

\author{Stephan Wagner}
\address{Stephan Wagner \\
Department of Mathematical Sciences \\
Stellenbosch University \\
Private Bag X1 \\
Matieland 7602 \\
South Africa
}
\email{swagner@sun.ac.za}
\thanks{Supported by the National Research Foundation of South Africa under grant number 70560.}

\date{\today}

\begin{abstract}
Let $\lambda_1,\dots,\lambda_n$ be the eigenvalues of a graph $G$. For any 
$k\geq 0$, the $k$-th spectral moment of $G$ is defined by $\M_k(G)=\lambda_1^k+\dots+\lambda_n^k$. We use the fact that $\M_k(G)$ is also the number of closed walks of length $k$ in $G$ to show that among trees $T$ whose degree sequence is $D$ or majorized by $D$, $\M_k(T)$ is maximized by the greedy tree with degree sequence $D$ (constructed by assigning the highest degree in $D$ to the root, the second-, third-, \dots highest degrees to the neighbors of the root, and so on) for any $k\geq 0$. Several corollaries follow, in particular a conjecture of Ili\'c and Stevanovi\'c on trees with given maximum degree, which in turn implies a conjecture of Gutman, Furtula, Markovi\'c and Gli\v{s}i\'c  on the Estrada index of such trees, which is defined as $\EE(G)=e^{\lambda_1}+\dots+e^{\lambda_n}$.
\end{abstract}

\maketitle

\section{Introduction}
\label{Sec:i}
Let $G$ be a graph with adjacency matrix $A$, and let $\lambda_1,\dots,\lambda_n$ be the 
eigenvalues of $A$. The $k$-th spectral moment of $G$ is defined as
\begin{equation}
\label{Eq:EMk}
\M_k(G)=\sum_{k=0}^n\lambda_i^k.
\end{equation}
A \emph{walk} of length $k$ in a graph $G$ is any sequence $w_1w_2\dots w_{k+1}$ of vertices of $G$ such that $w_iw_{i+1}$ is an edge in $G$ for $i=1,\dots,k$. Since $\tr(A^k)=\M_k(G)$, where $\tr(A^k)$ is the trace of the $k$-th power of $A$, 
$\M_k(G)$ is (see \cite{Book_Spect}) exactly the number of closed walks (walks that start and end at the same vertex) of length $k$ in $G$.
The spectral moments of $G$ are also closely related to the so-called \emph{Estrada index} \cite{EstEst}, which is defined as
\begin{equation}
\label{Eq:EE}
 \EE(G)=\sum_{i=1}^ne^{\lambda_i}.
\end{equation}
It follows from \eqref{Eq:EMk}, \eqref{Eq:EE} and the power series expansion of the exponential function that
\begin{equation}
 \EE(G)=\sum_{i=1}^n\sum_{k=0}^{\infty}\frac{\lambda_i^k}{k!}=\sum_{k=0}^{\infty}\frac{\M_k(G)}{k!}.
\end{equation}
Ernesto Estrada \cite{EstradaIntro} introduced the parameter $\EE$ in 2000  and showed how it can be used to study aspects of molecular structures such as the degree of folding of proteins, see also \cite{EstradaIntro2,EstradaInt4}. Applications of $\EE$ expanded quickly to the study of  complex networks \cite{EsCompl} and quantum chemistry \cite{QUA:QUA20850}. See \cite{Gutman2011Estrada} for a recent survey on the Estrada index.

Let us also define a generalization of the graph invariant $\EE$: for any function $f: \R \to \R$, we set
\begin{equation}\label{Eq:efdefi}
 \E_f(G)=\sum_{i=1}^nf(\lambda_i).
\end{equation}
Obviously, we obtain the $k$-th spectral moment for $f(x) = x^k$, the Estrada index for $f(x) = e^x$ and the graph energy (see \cite{li2012graphenergy} and the references therein) for $f(x) = |x|$. More examples will be discussed at a later stage. If we assume that $f$ has a power series expansion around $0$ that converges everywhere, i.e.,
\begin{equation}
\label{Eq:f}
f(x)=\sum_{k=0}^{\infty}a_kx^k,
\end{equation}
then $\E_f$ satisfies the relation 
\begin{equation}
\label{Eq:ef}
\E_f(G)=\sum_{i=1}^n\sum_{k=0}^{\infty}a_k\lambda_i^k=\sum_{k=0}^{\infty}a_k\M_k(G).
\end{equation}

Let $\mathbb{T}_D$ denote the set of trees with degree sequence $D$. The class of trees with fixed degree sequence is very popular in extremal graph theory. 
For example, it has been studied with regards to the Wiener index \cite{WangHua2008,WangHua2009} and other distance-based invariants \cite{schm12}, spectral radius and Laplacian spectral radius \cite{biyikoglu2008graphs,zhang08b,biyi09},
the energy and the number of independent subsets \cite{Andriantiana2012}, and the number of subtrees \cite{zhang12,zhang12b}. 

The greedy tree $G(D)$ is the tree obtained from a ``greedy algorithm'' that we will describe 
in detail in the following section. Roughly speaking, it is obtained by assigning the highest degree in $D$ to the root, the largest degrees that are left to its neighbors, and so on. 

For any degree sequence $D$, we prove that $G(D)$ has maximum $k$-th spectral moment for any $k\geq 0$, and for sufficiently large $k$, it is unique with this property. Consequently, the greedy tree also maximizes $\E_f$ for any $f$ as in \eqref{Eq:f} among all elements of $\mathbb{T}_D$, provided that the coefficients $a_k$ are nonnegative for even $k$ (the odd spectral moments are $0$ for all bipartite graphs, thus in particular for trees). Details of the proof are provided in Section~\ref{Sec:Main}. Furthermore, in Section~\ref{Sec:Diff} we show that if 
two degree sequences $D=(d_1,\dots,d_n)$ and $B=(b_1,\dots,b_n)$ satisfy 
\begin{equation}
 \label{Eq:Major}
\sum_{i=1}^lb_i \leq \sum_{i=1}^ld_i
\end{equation}
for all $1\leq l \leq n$ (i.e., $D$ \emph{majorizes} $B$), then $\M_k(G(B))\leq \M_k(G(D))$ for any $k\geq 0.$ A number of corollaries can be deduced from these results. In particular a
conjecture of Ili\'c and Stevanovi\'c follows as a corollary to our theorems, which reads as follows:
\begin{conj}[Ili\'c/Stevanovi\'c \cite{Conj}]
\label{Conj:1}
For any $k\geq 2$, the Volkmann tree (see Figure~\ref{fig:volkmann}) has maximum spectral moment $\M_{2k}$ among trees of $n$ vertices with maximum degree $\Delta$.
\end{conj}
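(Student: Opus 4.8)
The plan is to derive the conjecture directly from the two results already established in the excerpt: that $G(D)$ maximizes $\M_k$ among all trees whose degree sequence is $D$ or majorized by $D$, together with the accompanying majorization statement $\M_k(G(B)) \le \M_k(G(D))$. The restriction to even moments in the conjecture is immaterial to the method: since trees are bipartite, all odd spectral moments vanish, so $\M_{2k}$ is simply an instance of the $\M_k$ covered by the main theorem. The hypothesis $k \ge 2$ merely excludes the degenerate case $\M_2 = 2(n-1)$, which is constant over all $n$-vertex trees.

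First I would identify the single degree sequence that dominates the whole class. Among all sequences of $n$ positive integers, each at most $\Delta$, with total sum $2(n-1)$ — equivalently, among all degree sequences of $n$-vertex trees with maximum degree at most $\Delta$ — I claim there is a distinguished one,
\[
D^* = (\underbrace{\Delta, \dots, \Delta}_{m}, r, \underbrace{1, \dots, 1}_{n-m-1}),
\]
obtained by making as many entries as possible equal to $\Delta$, where $m$ and $1 \le r \le \Delta$ are fixed by the sum constraint. I would show that $D^*$ majorizes every admissible degree sequence $D$. Writing both in non-increasing order and using that the entries of $D$ never exceed $\Delta$ while those of $D^*$ are pinned at $\Delta$ as long as the budget allows, the prefix sums of $D^*$ dominate those of $D$ for every cutoff $l$; since the two total sums agree, this is exactly the majorization condition \eqref{Eq:Major}. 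Realizability of $D^*$ as a tree is automatic, since any sequence of $n$ positive integers summing to $2(n-1)$ is a tree degree sequence.

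With $D^*$ in hand, the conjecture follows in one step. Every $n$-vertex tree $T$ with maximum degree at most $\Delta$ has a degree sequence that is $D^*$ or is majorized by $D^*$, so the main theorem applied with $D = D^*$ yields $\M_k(T) \le \M_k(G(D^*))$ for all $k \ge 0$, in particular for all even $k$. It then remains to recognize $G(D^*)$ as the Volkmann tree of Figure~\ref{fig:volkmann}: the greedy construction assigns $\Delta$ to the root, $\Delta$ to each of its neighbours, and continues to fill each successive breadth-first layer with degree-$\Delta$ vertices until the budget is exhausted, leaving a single vertex of degree $r$ and then the leaves. This is precisely the balanced, level-by-level description of $V_{n,\Delta}$.

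I expect the only genuine obstacle to be the majorization claim of the second step. While intuitively clear, a careful proof must verify that front-loading the degrees to their maximal value $\Delta$ really dominates every prefix sum, and confirm that the resulting sequence is an admissible tree degree sequence; the remaining steps are a direct invocation of the theorem and an identification of the greedy tree with the Volkmann tree, both of which are essentially definitional.
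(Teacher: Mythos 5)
Your proposal is correct and follows essentially the same route as the paper: the authors likewise deduce the conjecture by observing that the Volkmann tree is the greedy tree of the degree sequence $(\Delta,\dots,\Delta,r,1,\dots,1)$, which majorizes every degree sequence of an $n$-vertex tree with maximum degree $\Delta$, and then invoking Theorems~\ref{Th:Main} and~\ref{Th:comp}. Your added details (the prefix-sum verification of the majorization, realizability of $D^*$, and the identification of $G(D^*)$ with the Volkmann tree) are exactly the routine checks the paper leaves implicit.
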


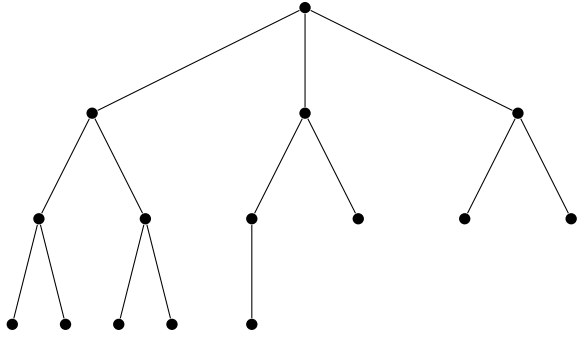
\begin{figure}[htbp]

\centering
    \begin{tikzpicture}[scale=0.7]
        \node[fill=black,circle,inner sep=1.5pt] (v) at (8,6) {};
        \node[fill=black,circle,inner sep=1.5pt] (v1) at (4,4) {};
        \node[fill=black,circle,inner sep=1.5pt] (v2) at (8,4) {};
        \node[fill=black,circle,inner sep=1.5pt] (v3) at (12,4) {};
        \node[fill=black,circle,inner sep=1.5pt] (v11) at (3,2) {};
        \node[fill=black,circle,inner sep=1.5pt] (v12) at (5,2) {};
        \node[fill=black,circle,inner sep=1.5pt] (v21) at (7,2) {};
        \node[fill=black,circle,inner sep=1.5pt] (v22) at (9,2) {};
        \node[fill=black,circle,inner sep=1.5pt] (v31) at (11,2) {};
        \node[fill=black,circle,inner sep=1.5pt] (v32) at (13,2) {};

        \node[fill=black,circle,inner sep=1.5pt] (v111) at (2.5,0) {};
        \node[fill=black,circle,inner sep=1.5pt] (v112) at (3.5,0) {};
        \node[fill=black,circle,inner sep=1.5pt] (v121) at (4.5,0) {};
        \node[fill=black,circle,inner sep=1.5pt] (v122) at (5.5,0) {};
        \node[fill=black,circle,inner sep=1.5pt] (v211) at (7,0) {};

        \draw (v)--(v1);
        \draw (v)--(v2);
        \draw (v)--(v3);
        \draw (v1)--(v11);
        \draw (v1)--(v12);
        \draw (v2)--(v21);
        \draw (v2)--(v22);
        \draw (v3)--(v31);
        \draw (v3)--(v32);
        \draw (v11)--(v111);
        \draw (v11)--(v112);
        \draw (v12)--(v121);
        \draw (v12)--(v122);
        \draw (v21)--(v211);
    \end{tikzpicture}

\caption{The Volkmann tree for $\Delta = 3$, $n = 15$.}
\label{fig:volkmann}
\end{figure}

This, in turn, implies an older conjecture of Gutman, Furtula, Markovi\'c and Gli\v{s}i\'c \cite{gutman2007alkanes}, stating that the Volkmann tree has greatest Estrada index among all trees with maximum degree $\Delta$, see also \cite[p.168]{Gutman2011Estrada}. The Volkmann tree, shown in Figure~\ref{fig:volkmann} in the case $\Delta = 3$ and $n=15$, is essentially a complete $\Delta$-ary tree, and a special case of a greedy tree whose degree sequence is $(\Delta,\Delta,\ldots,\Delta,r,1,1,\ldots,1)$ for some $r$ between $1$ and $\Delta$. Gutman et al. provide an argument supporting their conjecture, which however is not fully rigorous. The Volkmann tree is well-known to be extremal for other graph invariants, notably for the Wiener index \cite{fischermann2002wiener}.

 The conjecture of Ili\'c and Stevanovi\'c was proved by Zhang, Zhou and Li \cite{zhang2011estrada} in the case that the maximum degree $\Delta$ is large (greater than $n/3$). See \cite{du2011estrada1,du2011estrada2,du2012estrada1,du2012estrada2} for further recent extremal results concerning the Estrada index, in particular the Estrada index of trees.

\section{Preliminaries}
\label{Sec:p}
We start with formal definitions of specific terminologies and certain types of trees which will be of central interest in this paper, compare also \cite{schm12,zhang12,andriantiana2013greedy}.
\begin{defn}
Let $F$ be a rooted forest where the maximum height of any component is $k-1$. The \emph{leveled degree sequence} of $F$ is the sequence 
\begin{equation}
\label{Eq:D}
D=(V_1, \dots,V_k),
\end{equation}
where, for any $1\leq i \leq k$, $V_i$ is the non-increasing sequence formed by the degrees of the 
vertices of $F$ at the $i^{\text{th}}$ level (i.e., vertices of distance $i-1$ from the root in the respective component). 
\end{defn}

\begin{defn}
\label{Def:lgf}
The \emph{level greedy forest} with leveled degree sequence 
\begin{equation}\label{eq:ldegseq}
D=((i_{1,1},\dots,i_{1,k_1}), (i_{2,1},\dots,i_{2,k_2}),\dots,(i_{n,1},\dots,i_{n,k_n}))
\end{equation}
is obtained using the following ``greedy algorithm'': 
\begin{enumerate}
\item[(i)] Label the vertices of the first level $g_1^1,\dots,g_{k_1}^1$, and assign degrees 
to these vertices such that $\deg g_j^1 = i_{1,j}$ for all $j$.

\item[(ii)] Assume that the vertices of the $h^{\text{th}}$ level have been labeled 
$g_1^h,\dots,g_{k_h}^h$ and a degree has been assigned to each of them. Then for all 
$1\leq j \leq k_h$ label the neighbors of $g^h_j$ at the $(h+1)^{\text{th}}$ level, if any, 
by $$g_{1+\sum_{m=1}^{j-1}(i_{h,m}-1)}^{h+1},\dots,g^{h+1}_{\sum_{m=1}^{j}(i_{h,m}-1)},$$ and assign 
degrees to the newly labeled vertices such that $\deg g_j^{h+1} = i_{h+1,j}$ for all $j$.
\end{enumerate}
\end{defn}
The level greedy forest with leveled degree sequence $D$ is denoted by $G(D)$. We will use the labeling described in the definition throughout this paper, for level greedy trees and forests as well as related trees.
\begin{defn}
\label{Def:gt}
A connected level greedy forest is called a \emph{level greedy tree}.
\end{defn}
We will also encounter an edge-rooted version of the level greedy tree.
\begin{defn}
The \emph{edge-rooted level greedy tree} with leveled degree sequence
$$D=((i_{1,1},i_{1,2}), (i_{2,1},\dots,i_{2,k_2}),\dots,(i_{n,1},\dots,i_{n,k_n}))$$
is obtained from the two-component level greedy forest with leveled degree sequence
$$((i_{1,1}-1,i_{1,2}-1), (i_{2,1},\dots,i_{2,k_2}),\dots,(i_{n,1},\dots,i_{n,k_n}))$$
by joining the two roots.
\end{defn}
Now, we are ready to define greedy trees:
\begin{defn}
\label{Def:gf}
If a root in a tree can be chosen such that it becomes a level greedy tree whose 
leveled degree sequence, as given in \eqref{eq:ldegseq},
satisfies 
$$\min (i_{j,1},\dots,i_{j,k_j})\geq \max (i_{j+1,1},\dots,i_{j+1,k_{j+1}})$$
for all $1\leq j\leq n-1$, then it is called a \emph{greedy tree}.
\end{defn}

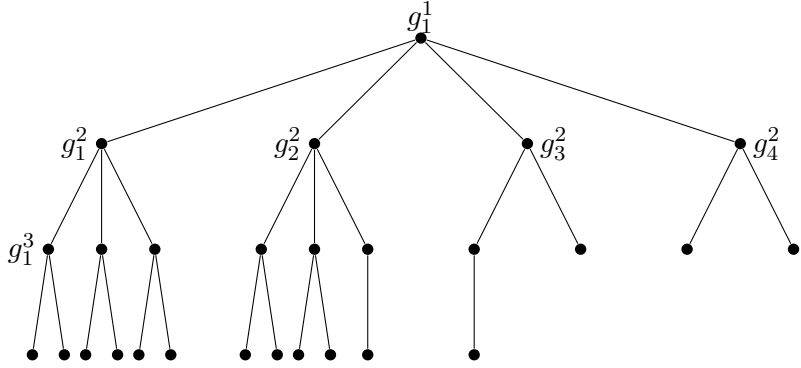
\begin{figure}[htbp]

\centering
    \begin{tikzpicture}[scale=0.7]
        \node[fill=black,circle,inner sep=1.5pt] (v) at (10,6) {};
        \node[fill=black,circle,inner sep=1.5pt] (v1) at (4,4) {};
        \node[fill=black,circle,inner sep=1.5pt] (v2) at (8,4) {};
        \node[fill=black,circle,inner sep=1.5pt] (v3) at (12,4) {};
        \node[fill=black,circle,inner sep=1.5pt] (v4) at (16,4) {};
        \node[fill=black,circle,inner sep=1.5pt] (v42) at (17,2) {};
        \node[fill=black,circle,inner sep=1.5pt] (v12) at (4,2) {};
        \node[fill=black,circle,inner sep=1.5pt] (v41) at (15,2) {};
        \node[fill=black,circle,inner sep=1.5pt] (v32) at (13,2) {};
        \node[fill=black,circle,inner sep=1.5pt] (v22) at (8,2) {};
        \node[fill=black,circle,inner sep=1.5pt] (v31) at (11,2) {};
        \node[fill=black,circle,inner sep=1.5pt] (v23) at (9,2) {};
        \node[fill=black,circle,inner sep=1.5pt] (v21) at (7,2) {};
        \node[fill=black,circle,inner sep=1.5pt] (v13) at (5,2) {};
        \node[fill=black,circle,inner sep=1.5pt] (v11) at (3,2) {};

        \node[fill=black,circle,inner sep=1.5pt] (v111) at (3.3,0) {};
        \node[fill=black,circle,inner sep=1.5pt] (v112) at (2.7,0) {};
        \node[fill=black,circle,inner sep=1.5pt] (v121) at (4.3,0) {};
        \node[fill=black,circle,inner sep=1.5pt] (v122) at (3.7,0) {};
        \node[fill=black,circle,inner sep=1.5pt] (v131) at (5.3,0) {};
        \node[fill=black,circle,inner sep=1.5pt] (v132) at (4.7,0) {};
        \node[fill=black,circle,inner sep=1.5pt] (v211) at (7.3,0) {};
        \node[fill=black,circle,inner sep=1.5pt] (v212) at (6.7,0) {};
        \node[fill=black,circle,inner sep=1.5pt] (v221) at (8.3,0) {};
        \node[fill=black,circle,inner sep=1.5pt] (v222) at (7.7,0) {};
        \node[fill=black,circle,inner sep=1.5pt] (v231) at (9,0) {};
        \node[fill=black,circle,inner sep=1.5pt] (v311) at (11,0) {};

        \draw (v)--(v1);
        \draw (v)--(v2);
        \draw (v)--(v3);
        \draw (v)--(v4);
        \draw (v1)--(v11);
        \draw (v1)--(v12);
        \draw (v1)--(v13);
        \draw (v2)--(v21);
        \draw (v2)--(v22);
        \draw (v2)--(v23);
        \draw (v3)--(v31);
        \draw (v3)--(v32);
        \draw (v4)--(v41);
        \draw (v4)--(v42);
        \draw (v11)--(v111);
        \draw (v11)--(v112);
        \draw (v12)--(v121);
        \draw (v12)--(v122);
        \draw (v13)--(v131);
        \draw (v13)--(v132);
        \draw (v21)--(v211);
        \draw (v21)--(v212);
        \draw (v22)--(v221);
        \draw (v22)--(v222);
        \draw (v23)--(v231);
        \draw (v31)--(v311);

\node at  (10,6.4) {$g_1^1$};

\node at  (3.5,4) {$g_1^2$};
\node at  (7.5,4) {$g_2^2$};
\node at  (12.5,4) {$g_3^2$};
\node at  (16.5,4) {$g_4^2$};

\node at  (2.5,2) {$g_1^3$};
    \end{tikzpicture}

\caption{A greedy tree (only the labels of the first six vertices are shown).}
\label{fig:greedy_tree}
\end{figure}
We denote the set of all permutations of $\{1,\dots,n\}$ by $\mathcal{S}_n$. Let $A=(a_1,\dots,a_n)$ and $B=(b_1,\dots,b_n)$ be sequences of nonnegative numbers. 
We say that $A$ \emph{majorizes} $B$ if for all $1\leq k \leq n$ we have
$$
\sum_{i=1}^ka_i \geq \sum_{i=1}^kb_i.
$$
If for any $\sigma\in \mathcal{S}_n$ the sequence $A$ majorizes 
$(b_{\sigma(1)},\dots,b_{\sigma(n)})$, then we write
\begin{equation}
 \label{Eq:maj}
B \preccurlyeq A.
\end{equation}
\begin{rem}
\label{Rem:1}
Let $\sigma \in \mathcal{S}_n$ be such that $b_{\sigma(1)}\geq\dots \geq b_{\sigma(n)}$.
It is easy to see that $(b_{\sigma'(1)},\dots, b_{\sigma'(n)})\preccurlyeq (b_{\sigma(1)},\dots, b_{\sigma(n)})$
for any $\sigma'\in \mathcal{S}_n$. Relation \eqref{Eq:maj} is equivalent to the statement that $A$ majorizes $(b_{\sigma(1)},\dots,b_{\sigma(n)})$. Furthermore, \eqref{Eq:maj} is equivalent to the statement that for any $k\in\{1,\dots,n\}$
we have 
$$
(b_{\sigma'(1)},\dots,b_{\sigma'(k)}) \preccurlyeq (a_1,\dots,a_k)
$$
for all $\sigma'\in \mathcal{S}_n$.

\end{rem}
The rest of this section consists of a series of lemmas describing properties of 
sequences, which will then be applied to degree sequences in the following sections.
\begin{lem}[cf. \cite{schm12}]
\label{Lem:mj1}
Suppose that $(b_1,\dots,b_n)\preccurlyeq(a_1,\dots,a_n)$ and 
$(b'_1,\dots,b'_n)\preccurlyeq(a'_1,\dots,a'_n)$. Then we have
$$
b'_1b_1+\dots+b'_nb_n\leq a'_1a_1+\dots+a'_na_n.
$$
\end{lem}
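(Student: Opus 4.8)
The plan is to reduce to non-increasing sequences and then apply summation by parts (Abel summation) to each factor in turn, feeding in the prefix-sum inequalities that encode the relation $\preccurlyeq$. By the rearrangement inequality, replacing $(b_1,\dots,b_n)$ and $(b'_1,\dots,b'_n)$ by their non-increasing rearrangements can only increase the left-hand side $b_1b_1+\dots+b_nb_n$ (I mean $b_1b'_1+\dots+b_nb'_n$), and by Remark~\ref{Rem:1} it leaves the relations $\preccurlyeq$ intact; so I may assume $b_1\ge\dots\ge b_n$ and $b'_1\ge\dots\ge b'_n$, and I take $a_1,\dots,a_n$ and $a'_1,\dots,a'_n$ in non-increasing order as well, which is the relevant convention for the degree sequences to which the lemma is applied. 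Writing $A_k=\sum_{i=1}^k a_i$, $B_k=\sum_{i=1}^k b_i$, and likewise $A'_k,B'_k$, Remark~\ref{Rem:1} unwinds the hypothesis $(b_1,\dots,b_n)\preccurlyeq(a_1,\dots,a_n)$ into the prefix inequalities $A_k\ge B_k$ for all $k$, and similarly $(b'_1,\dots,b'_n)\preccurlyeq(a'_1,\dots,a'_n)$ into $A'_k\ge B'_k$ for all $k$.

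With these in hand the core is a two-step Abel summation. Setting $a'_{n+1}=0$ and using that $(a'_i)$ is non-increasing (so $a'_i-a'_{i+1}\ge 0$), summation by parts gives
\[
\sum_{i=1}^n a_ia'_i=\sum_{i=1}^n A_i\,(a'_i-a'_{i+1})\ge\sum_{i=1}^n B_i\,(a'_i-a'_{i+1})=\sum_{i=1}^n b_ia'_i,
\]
where the middle inequality is just the termwise use of $A_i\ge B_i$ against nonnegative weights. Running the same manoeuvre on the remaining factor, now with $b_{n+1}=0$, using that $(b_i)$ is non-increasing together with $A'_i\ge B'_i$, yields
\[
\sum_{i=1}^n b_ia'_i=\sum_{i=1}^n A'_i\,(b_i-b_{i+1})\ge\sum_{i=1}^n B'_i\,(b_i-b_{i+1})=\sum_{i=1}^n b_ib'_i.
\]
Chaining the two displays gives $\sum_i a_ia'_i\ge\sum_i b_ia'_i\ge\sum_i b_ib'_i$, which is the assertion.

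The step I expect to be delicate is the passage to non-increasing order and the attendant monotonicity bookkeeping, rather than the summation by parts itself. Summation by parts only produces a sign-definite estimate when the differenced factor is monotone, so that its increments pair with the nonnegative prefix gaps $A_k-B_k\ge 0$ and $A'_k-B'_k\ge 0$; this is precisely why each step above needs one of the sequences to be non-increasing. For $b,b'$ this arrangement is free (the rearrangement inequality and the permutation-invariance of $\preccurlyeq$ from Remark~\ref{Rem:1}), but the two applications genuinely use that $a'$ and $b$ are non-increasing, and one must check that the relation $\preccurlyeq$ really delivers $A_k\ge B_k$ for the \emph{given} ordering of $a$ (which it does, since $\preccurlyeq$ is equivalent to $A$ majorizing the non-increasing rearrangement of $B$). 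A symmetric variant, Abel-summing first in the other factor, shows the inequality likewise follows if instead $a$ is non-increasing; either way the only real care needed is to confirm the boundary conventions $a'_{n+1}=b_{n+1}=0$ and that the monotonicity used in each step is available.
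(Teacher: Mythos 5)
The paper never proves this lemma: it is imported from \cite{schm12} with a ``cf.''\ and no argument, so there is no internal proof to compare against and your attempt must stand on its own. Your two Abel summations are the standard way to prove such a statement, and the reduction of $(b_i)$, $(b'_i)$ to non-increasing order (rearrangement inequality to increase the left-hand side, Remark~\ref{Rem:1} to preserve the hypotheses) is correct. The problem is the clause ``I take $a_1,\dots,a_n$ and $a'_1,\dots,a'_n$ in non-increasing order as well'': this is not a reduction but an added hypothesis. Sorting $a$ and $a'$ independently re-pairs them, and by the rearrangement inequality this can only \emph{increase} $\sum_i a_ia'_i$; so proving the inequality for the sorted versions says nothing about the original pairing. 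Moreover the gap cannot be closed, because with the paper's definition of $\preccurlyeq$ (which constrains the prefix sums of $a$ in its \emph{given} order, and is permutation-blind only on the $b$ side) the statement is false for general orderings: take $n=3$, $a=(1,2,0)$, $a'=(2,0,2)$, $b=b'=(1,1,1)$. Then $b\preccurlyeq a$ (prefix sums $1,3,3\ge 1,2,3$) and $b'\preccurlyeq a'$ (prefix sums $2,2,4\ge 1,2,3$), yet $\sum_i a_ia'_i=1\cdot 2+2\cdot 0+0\cdot 2=2<3=\sum_i b_ib'_i$. So monotonicity on the majorizing side is genuinely needed, not a convention one may adopt.

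That said, the statement you actually proved is the one the paper uses, and your closing observation is the key to seeing this: each Abel step needs monotonicity of only one factor, so your argument (together with its symmetric variant) shows the conclusion holds whenever \emph{at least one} of $a$, $a'$ is non-increasing, with $b$, $b'$ sorted for free. Every invocation in the paper is of this type: in the proof of Lemma~\ref{Lem:mj3} the lemma is applied with second pair $(c_1,\dots,c_{k+1})\preccurlyeq(c_1,\dots,c_{k+1})$, where $C$ is non-increasing by hypothesis while the first majorizing sequence is arbitrary; and in the walk-counting arguments the majorizing sequences belong to the greedy tree and are non-increasing by \eqref{Eq:th11} and \eqref{Eq:th22}. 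So the fair verdict is: your computation is the correct and standard proof of the lemma under a monotonicity hypothesis (presumably the form it takes in \cite{schm12}); as a proof of the lemma as literally stated here it has a gap at the ordering assumption, but that gap is unfillable because the literal statement is false --- a defect of the paper's formulation, which your proof quietly repairs rather than inherits.
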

The next, stronger looking, lemma is in fact equivalent to Lemma~\ref{Lem:mj1}.
\begin{lem}
 \label{Lem:mj2}
Suppose that $(b_1,\dots,b_n)\preccurlyeq(a_1,\dots,a_n)$ and 
$(b'_1,\dots,b'_n)\preccurlyeq(a'_1,\dots,a'_n)$. Then we have
$$
(b'_1b_1,\dots,b'_nb_n)\preccurlyeq(a'_1a_1,\dots,a'_na_n).
$$
\end{lem}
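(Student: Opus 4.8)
The plan is to deduce Lemma~\ref{Lem:mj2} from Lemma~\ref{Lem:mj1}, which also settles the claimed equivalence: the reverse implication is immediate, since reading off the total sum (the case of the full index set) in the conclusion of Lemma~\ref{Lem:mj2} recovers the inequality $b'_1b_1+\dots+b'_nb_n\leq a'_1a_1+\dots+a'_na_n$ of Lemma~\ref{Lem:mj1}. So the only substantive direction is to derive the majorization relation for the products from the scalar inequality for arbitrary sequences.

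First I would reformulate the goal by means of Remark~\ref{Rem:1}. By that remark, the assertion $(b'_1b_1,\dots,b'_nb_n)\preccurlyeq(a'_1a_1,\dots,a'_na_n)$ is equivalent to the statement that $(a'_1a_1,\dots,a'_na_n)$ majorizes the non-increasing rearrangement of the products $b'_ib_i$; that is, for every $k\in\{1,\dots,n\}$ the sum of the $k$ largest products $b'_ib_i$ is at most $\sum_{i=1}^k a'_ia_i$. Thus it suffices to fix $k$ and bound the sum of the $k$ largest products by the partial sum of the products on the majorizing side.

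For fixed $k$, let $S\subseteq\{1,\dots,n\}$ be a set of $k$ indices for which the products $b'_ib_i$ are largest, so that the sum of the $k$ largest products equals $\sum_{i\in S}b'_ib_i$. The central observation is that $S$ is a single index set, selected according to the products, and both factor sequences are then restricted to it simultaneously. Applying the last characterization in Remark~\ref{Rem:1} to the hypothesis $(b_1,\dots,b_n)\preccurlyeq(a_1,\dots,a_n)$ (with a permutation whose first $k$ values enumerate $S$) yields $(b_i)_{i\in S}\preccurlyeq(a_1,\dots,a_k)$, and the same argument applied to $(b'_1,\dots,b'_n)\preccurlyeq(a'_1,\dots,a'_n)$ gives $(b'_i)_{i\in S}\preccurlyeq(a'_1,\dots,a'_k)$. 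Here I use that $\preccurlyeq$ does not depend on the internal ordering of its left-hand sequence, so I may list the entries $b_i$ and $b'_i$ over $i\in S$ consistently, paired by their common index.

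With these two length-$k$ majorization relations in hand, Lemma~\ref{Lem:mj1} applied to the restricted sequences gives $\sum_{i\in S}b'_ib_i\leq\sum_{i=1}^k a'_ia_i$, which is exactly the inequality required for this value of $k$. As $k$ was arbitrary, the reformulated condition holds for all $k$, and the lemma follows. I expect the only delicate point to be the bookkeeping in the restriction step: one must verify that the same index set $S$ governs both factors—it is chosen by the products $b'_ib_i$, not by $b_i$ or $b'_i$ separately—and that Lemma~\ref{Lem:mj1} is invoked with the entries of $b$ and $b'$ over $S$ paired by index, so that its left-hand side reads precisely $\sum_{i\in S}b'_ib_i$ and not a mismatched pairing of the factors.
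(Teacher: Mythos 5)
Your proof is correct and follows essentially the same route as the paper: both sort by the products $b_ib'_i$ (your index set $S$ of the $k$ largest products is exactly the paper's $\{\sigma(1),\dots,\sigma(k)\}$ for the sorting permutation $\sigma$), then use Remark~\ref{Rem:1} to restrict both hypotheses to that common index set and apply Lemma~\ref{Lem:mj1} to the restricted sequences. The only addition is your explicit verification of the reverse implication (the claimed equivalence), which the paper asserts without proof.
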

\begin{proof}
Let $\sigma$ be the element of $\mathcal{S}_n$ for which $b_{\sigma(1)}b'_{\sigma(1)}\geq\dots \geq b_{\sigma(n)}b'_{\sigma(n)}$.
Using Remark~\ref{Rem:1}, we know that for any $k\in \{1,\dots,n\}$, we have 
$$(b_{\sigma(1)},\dots,b_{\sigma(k)})\preccurlyeq(a_1,\dots,a_k)$$
 and 
$$(b'_{\sigma(1)},\dots,b'_{\sigma(k)})\preccurlyeq(a'_1,\dots,a'_k).$$
 By Lemma~\ref{Lem:mj1} this implies 
$$b_{\sigma(1)}b_{\sigma(1)}'+\dots+b_{\sigma(k)}b'_{\sigma(k)}\leq a_1a'_1+\dots+a_ka'_k.$$
Hence, $(a_1a'_1,\dots,a_na'_n)$ majorizes $(b_{\sigma(1)}b'_{\sigma(1)},\dots, b_{\sigma(n)}b'_{\sigma(n)})$, 
and the lemma follows from Remark~\ref{Rem:1}.
\end{proof}
Let $(k_1,\dots,k_n)$ be a sequence of integers. For any sequence $(a_1,\dots,a_n)$, we define 
$$
(a_1,\dots,a_n)*(k_1,\dots,k_n)=\big(b_1,\dots,b_{\sum_{i=1}^nk_i}\big),
$$
where $b_j=a_{\ell}$ whenever $\sum_{i=1}^{\ell-1}k_i< j \leq \sum_{i=1}^{\ell}k_i$ (i.e., each $a_{\ell}$ is repeated $k_{\ell}$ times). For example, $(1,3,2)*(2,3,4)=(1,1,3,3,3,2,2,2,2).$
\begin{rem}
\label{Rem:2}
 It is easy to see that if the sequences $(k_1,\dots,k_n)$ and $(a_1,\dots,a_n)$ are non-increasing,
then for any $\sigma$ and $\pi$ in $\mathcal{S}_n$ we have
$$
(a_{\sigma(1)},\dots,a_{\sigma(n)})*(k_{\pi(1)},\dots,k_{\pi(n)}) \preccurlyeq (a_1,\dots,a_n)*(k_1,\dots,k_n).
$$
\end{rem}

\begin{lem}
\label{Lem:mj3}
Assume that $B=(b_1,\dots,b_n)\preccurlyeq(a_1,\dots,a_n)=A$ and let $C=(c_1,\dots,c_n)$ be a 
non-increasing sequence of positive integers. Then for any $\sigma \in \mathcal{S}_n$ we have
$
B*(c_{\sigma(1)},\dots,c_{\sigma(n)}) \preccurlyeq A*C.
$
\end{lem}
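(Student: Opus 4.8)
The plan is to reduce the claim to a single partial-sum inequality by first replacing the left factor with the sorted rearrangement of $B$ and replacing the permuted weights by $C$ itself, and then to verify that inequality by Abel summation together with a short interpolation argument. Write $\hat B=(b_{[1]},\dots,b_{[n]})$ for the non-increasing rearrangement of $B$; since $\hat B$ and $B$ have the same entries, the hypothesis $B\preccurlyeq A$ is exactly the statement that $\sum_{i=1}^k a_i\ge\sum_{i=1}^k b_{[i]}$ for every $k$. First I would apply Remark~\ref{Rem:2} to the non-increasing sequences $\hat B$ and $C$: as $B$ is a rearrangement of $\hat B$ and $(c_{\sigma(1)},\dots,c_{\sigma(n)})$ is a rearrangement of $C$, it yields $B*(c_{\sigma(1)},\dots,c_{\sigma(n)})\preccurlyeq \hat B*C$. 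By Remark~\ref{Rem:1} this says that for every $m$ the sum of the $m$ largest entries of $B*(c_{\sigma(1)},\dots,c_{\sigma(n)})$ is at most the sum of the $m$ largest entries of $\hat B*C$; and since $\hat B$ and $C$ are both non-increasing, the sequence $\hat B*C$ is itself non-increasing, so the latter quantity is just the partial sum $\sum_{i=1}^m(\hat B*C)_i$.

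By the same characterization (Remark~\ref{Rem:1}), the desired relation $B*(c_{\sigma(1)},\dots,c_{\sigma(n)})\preccurlyeq A*C$ will follow once I establish the single partial-sum inequality
\begin{equation*}
\sum_{i=1}^m (A*C)_i \ \ge\ \sum_{i=1}^m (\hat B*C)_i, \qquad 1\le m\le c_1+\cdots+c_n,
\end{equation*}
after which chaining with the previous paragraph closes the argument. Both $A*C$ and $\hat B*C$ carry the same block structure: the $j$-th block has $c_j$ equal entries, equal to $a_j$ on the left and to $b_{[j]}$ on the right. At a block boundary $m=\sum_{j=1}^\ell c_j$ the inequality reads $\sum_{j=1}^\ell c_j a_j\ge\sum_{j=1}^\ell c_j b_{[j]}$, which I would prove by Abel summation: setting $D_k=\sum_{j=1}^k(a_j-b_{[j]})\ge 0$ (this is precisely the hypothesis) and using $c_\ell\ge c_{\ell+1}\ge 0$, one obtains $\sum_{j=1}^\ell c_j(a_j-b_{[j]})=\sum_{j=1}^{\ell-1}(c_j-c_{j+1})D_j+c_\ell D_\ell\ge 0$.

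The remaining and most delicate point is to pass from block boundaries to arbitrary $m$, since the Abel step controls only the boundaries. Fix $\ell$ and let $m$ range over the $\ell$-th block; each time $m$ increases by one, the difference $\sum_{i=1}^m(A*C)_i-\sum_{i=1}^m(\hat B*C)_i$ changes by the constant amount $a_\ell-b_{[\ell]}$, so as a function of $m$ it is affine (arithmetic) on the block. An affine function attains its minimum on an interval at one of the endpoints, and the two endpoints of the $\ell$-th block are themselves block boundaries, where the difference is nonnegative by the Abel-summation step. Hence the difference is nonnegative throughout the block, and as $\ell$ was arbitrary this gives the partial-sum inequality and thereby the lemma. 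I expect this interpolation observation—that the partial-sum difference is piecewise linear and nonnegative at every block boundary—to be the crux, as it is exactly what bridges the gap that the boundary estimate leaves open.
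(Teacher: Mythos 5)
Your proof is correct, and it shares its opening reduction with the paper's: both sort $B$ into its non-increasing rearrangement $\hat B$, invoke Remark~\ref{Rem:2} to get $B*(c_{\sigma(1)},\dots,c_{\sigma(n)})\preccurlyeq \hat B*C$, and then reduce, via Remark~\ref{Rem:1}, to showing that the partial sums of $A*C$ dominate those of $\hat B*C$. Where you genuinely diverge is in how that last inequality is established. The paper runs a double induction (on $n$, and within that on the value of $c_{k+1}$), peeling off one copy of the last entry at a time and using Lemma~\ref{Lem:mj1} at each stage to compare the total sums. You instead argue directly: at block boundaries $m=c_1+\cdots+c_\ell$ the inequality $\sum_{j\le\ell}c_ja_j\ge\sum_{j\le\ell}c_jb_{[j]}$ follows by Abel summation from $D_k=\sum_{j\le k}(a_j-b_{[j]})\ge0$ together with the monotonicity and positivity of $C$, and at interior indices the partial-sum difference is affine in $m$ on each block, hence bounded below by its values at the two adjacent boundaries. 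Note that your Abel step is exactly the special case of Lemma~\ref{Lem:mj1} with $(a'_i)=(b'_i)=(c_1,\dots,c_\ell)$ (admissible since $C\preccurlyeq C$ for non-increasing $C$), so you could have cited that lemma rather than reproving it; but your interpolation observation is the real substitute for the paper's double induction, and it buys a shorter, non-inductive verification that makes transparent precisely where the positivity and monotonicity of $C$ enter. The paper's route, by contrast, stays entirely inside its established majorization toolkit at the cost of a somewhat fiddlier two-level induction.
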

\begin{proof}
Let $\pi\in\mathcal{S}_n$ be such that $b_{\pi(1)}\geq \dots \geq b_{\pi(n)}$, and let 
$B_{\pi}=(b_{\pi(1)}, \dots,$ $b_{\pi(n)})$. By Remark~\ref{Rem:2}, we know that 
$B*(c_{\sigma(1)},\dots,c_{\sigma(n)}) \preccurlyeq B_{\pi}*C.$ Since $B_{\pi}*C$ is a 
non-increasing sequence, we can prove the lemma by showing that $A*C$ majorizes $B_{\pi}*C$.

The case $n=1$ is trivial. Assume that the statement holds for $n=k$. For $n=k+1$, the relation $B\preccurlyeq A$ implies that 
$(b_{\pi(1)},\dots,b_{\pi(k)})\preccurlyeq(a_1,\dots,a_{k})$. By the induction hypothesis we deduce that 
\begin{equation}
 \label{Eq:8}
(b_{\pi(1)},\dots,b_{\pi(k)})*(c_1,\dots,c_k)\preccurlyeq(a_1,\dots,a_k)*(c_1,\dots,c_k).
\end{equation}
Now we reason by induction with respect to $c_{k+1}$. For any two sequences $S=(s_1,\dots,s_l)$ 
and $S'=(s'_1,\dots,s'_{l'})$, let $S:S'$ denote the sequence obtained by concatenation, i.e., $(s_1,\dots,s_l,s'_1,$ $\dots,s'_{l'})$.
If $c_{k+1}=1$, then $$(b_{\pi(1)},\dots,b_{\pi(k+1)})*C=((b_{\pi(1)},\dots,b_{\pi(k)})*(c_1,\dots,c_k)):(b_{\pi(k+1)})$$ and 
$A*C=((a_1,\dots,a_k)*(c_1,\dots,c_k)):(a_{k+1})$. Using Lemma~\ref{Lem:mj1} we know that
$$
\ssum (B_{\pi}*C)=\sum_{i=1}^{k+1}b_{\pi(i)}c_i\leq \sum_{i=1}^{k+1}a_ic_i =\ssum (A*C),
$$
where $\ssum (B_{\pi}*C)$ and $\ssum (A*C)$ are the sums of the entries in $B_{\pi}*C$ 
and $A*C$, respectively. With \eqref{Eq:8}, this implies that $A*C$ majorizes $(b_{\pi(1)},\dots,b_{\pi(k+1)})*C$.
 The (second) induction step follows from the relations
\begin{align*}
B_{\pi}*(c_1,\dots,c_{k+1})&=(b_{\pi(1)},\dots,b_{\pi(k+1)})*(c_1,\dots,c_{k+1}-1):(b_{\pi(k+1)}),\\
A*(c_1,\dots,c_{k+1})&=(a_1,\dots,a_{k+1})*(c_1,\dots,c_{k+1}-1):(a_{k+1}).
\end{align*}
\end{proof}

\section{Trees with given degree sequence}
\label{Sec:Main}
Let $T$ be a tree and $v$ one of its vertices. We denote by $\mathcal{W}_v(k;T)$ the set of all walks of length $k$ in $T$ starting at $v$, and by $\mathcal{C}_v(k;T)$ the set of all closed walks of length $k$ in $T$ starting and ending at $v$. We also write
\begin{equation}
\label{Eq:op}
\mathcal{W}(k;T)=\bigcup_{v\in V(T)}\mathcal{W}_v(k;T)
\end{equation}
for the set of all walks of length $k$ in $T$ and
\begin{equation}
\label{Eq:cl}
\mathcal{C}(k;T)=\bigcup_{v\in V(T)}\mathcal{C}_v(k;T)
\end{equation}
for the set of all closed walks of length $k$. Note that $\mathcal{C}(k;T) = \emptyset$ whenever $k$ is odd.

\subsection{Vertex rooted trees}
\label{Sec:vr}
Let $W=w_1\dots w_k$ be a walk in a rooted tree $T$. We say that $(i_1,i_2,\ldots,i_k)$ is the \emph{level sequence} of $W$ if $w_l$ is at the $i_l^{\text{th}}$ 
level in $T$, i.e., at distance $i_l-1$ from the root, for all $l \leq k$. We denote by $\mathcal{W}(i_1,\dots,i_k;T)$ the set of walks with level sequence 
$(i_1,\dots,i_k)$ in $T$. For any vertex $v$ of $T$ we define
$$
\mathcal{W}_{v}(i_1,\dots,i_k;T)=\{w_1\dots w_k\in \mathcal{W}(i_1,\dots,i_k;T): w_1=v\}.
$$
The sets $\mathcal{C}(i_1,\dots,i_k;T)$ and $\mathcal{C}_{v}(i_1,\dots,i_k;T)$ are defined analogously. Moreover, we denote the cardinalities of $\mathcal{W}(k;T)$ and $\mathcal{C}(k;T)$ by $W(k;T)$ and $C(k;T)$ respectively, the cardinality of $\mathcal{W}_{v}(i_1,\dots,i_k;T)$ by $W_{v}(i_1,\dots,i_k;T)$, etc. This convention will be kept even if not mentioned explicitly. Finally, the set of rooted forests with leveled degree sequence $D$ is denoted by $\mathcal{T}_D$.
\begin{lem}
 \label{Lem:open_wa_ver}
Let $T\in \mathcal{T}_D$ for some leveled degree sequence $D$ of a vertex-rooted forest, and let $G = G(D)$ be the associated greedy forest. Let 
$v_1^i,\dots,v_{d_i}^i$ be the vertices of $T$ at the $i^{\text{th}}$ level. 
Then for any level sequence of walks $(i_1,\dots,i_l)$, the following relations hold for all $i$:
\begin{align}
\label{Eq:th1}
 &(W_{v^{i}_1}(i_1,\dots,i_l;T),\dots,W_{v^{i}_{d_{i}}}(i_1,\dots,i_l;T))\preccurlyeq (W_{g^{i}_1}(i_1,\dots,i_l;G),\dots,W_{g^{i}_{d_{i}}}(i_1,\dots,i_l;G))
\end{align}
and
\begin{equation}
 \label{Eq:th11}
W_{g^{i}_1}(i_1,\dots,i_l;G)\geq W_{g^{i}_2}(i_1,\dots,i_l;G)\geq \dots\geq W_{g^{i}_{d_{i}}}(i,\dots,i_l;G).
\end{equation}
\end{lem}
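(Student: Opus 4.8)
The plan is to prove \eqref{Eq:th1} and \eqref{Eq:th11} \emph{simultaneously} by induction on the length $l$ of the level sequence. First note that we may assume $i_1=i$: a walk counted by $W_{v^i_j}(i_1,\dots,i_l;T)$ starts at a level-$i$ vertex, so unless $i_1=i$ both sequences in \eqref{Eq:th1} are identically zero and there is nothing to show. The base case $l=1$ is trivial, since then every count equals $1$ and both sequences reduce to $(1,\dots,1)$. For the inductive step I would peel off the first edge of each walk, classifying the second vertex by its level:
\[
W_{v}(i_1,\dots,i_l;T)=\sum_{\substack{w\sim v\\ \text{level}(w)=i_2}}W_{w}(i_2,\dots,i_l;T).
\]
Since $T$ is a forest there are only two cases, $i_2=i-1$ (\emph{ascending}: $w$ is the unique parent of $v$) and $i_2=i+1$ (\emph{descending}: $w$ ranges over the children of $v$), and in both the inner counts are controlled by the induction hypothesis applied to the shorter sequence $(i_2,\dots,i_l)$ at level $i_2$.

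In the ascending case each level-$i$ vertex simply inherits the count of its parent. Hence the level-$i$ sequence in $G$ is obtained from the parent-count sequence $X=(W_{g^{i-1}_1}(i_2,\dots;G),\dots)$, which is non-increasing by \eqref{Eq:th11} of the induction hypothesis, by repeating the entry belonging to $g^{i-1}_j$ exactly $\deg g^{i-1}_j-1$ times; that is, it equals $X*C$, where $C$ records the child-counts at level $i-1$ (a non-increasing sequence of positive integers, after discarding the leaves, which spawn no level-$i$ vertices). The corresponding $T$-sequence is an arrangement of $Y*C'$, where $Y$ collects the $T$-parent-counts and $C'$ is a permutation of $C$; restricting $Y$ to those parents that actually have children, Remark~\ref{Rem:1} and the induction hypothesis give that this restriction is majorized by the first entries of $X$. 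Lemma~\ref{Lem:mj3} then yields $Y*C'\preccurlyeq X*C$, and $X*C$ is non-increasing because $X$ is, which establishes both \eqref{Eq:th1} and \eqref{Eq:th11} here.

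The descending case carries the real work. Write $X=(x_1,x_2,\dots)=(W_{g^{i+1}_1}(i_2,\dots;G),\dots)$, which is non-increasing by \eqref{Eq:th11}. Then $W_{g^i_j}(i_1,\dots;G)$ is the sum of the block of $X$ belonging to $g^i_j$; the greedy labelling makes these blocks \emph{consecutive} with non-increasing sizes $c_1\geq c_2\geq\cdots$ (the level-$i$ child-counts). Since consecutive blocks of non-increasing sizes cut from a non-increasing sequence have non-increasing sums, \eqref{Eq:th11} follows at once. For \eqref{Eq:th1}, the $T$-counts arise by partitioning the multiset $Y=(W_{v^{i+1}_1}(i_2,\dots;T),\dots)$, with $Y\preccurlyeq X$, into blocks whose sizes again form the multiset $\{c_j\}$, and summing. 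Any $k$ of the resulting $T$-block-sums involve at most $c_1+\dots+c_k$ of the $Y$-values; since $Y\preccurlyeq X$ and $X$ is non-increasing, the sum of any $m$ entries of $Y$ is at most $x_1+\dots+x_m$, and by non-negativity this is at most $x_1+\dots+x_{c_1+\dots+c_k}=W_{g^i_1}(i_1,\dots;G)+\dots+W_{g^i_k}(i_1,\dots;G)$. Thus every $k$-subset of the $T$-block-sums is dominated by the sum of the top $k$ $G$-block-sums, which via Remark~\ref{Rem:1} is exactly \eqref{Eq:th1}.

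The main obstacle is this descending step, and it is precisely here that the greedy construction is essential, through the confluence of three facts: the induction hypothesis furnishes a \emph{non-increasing} sequence $X$; the greedy labelling pairs the largest blocks with the largest entries of $X$ in consecutive order; and the block sizes form a fixed multiset. Monotonicity together with non-negativity of the counts is what turns ``at most $c_1+\dots+c_k$ values'' into a clean bound. Without \eqref{Eq:th11} being transported alongside \eqref{Eq:th1} in the induction, one could not identify the top blocks of $G$, which is the reason the two assertions must be proved together.
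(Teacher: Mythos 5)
Your proof is correct and follows essentially the same route as the paper's: induction on $l$ carrying \eqref{Eq:th1} and \eqref{Eq:th11} together, splitting on $i_2=i\pm 1$, using Lemma~\ref{Lem:mj3} for the parent (ascending) case and a block-sum majorization argument via \eqref{Eq:Deg} for the children (descending) case. You merely spell out details the paper leaves terse (the explicit domination of $k$ block sums, and discarding leaves so that Lemma~\ref{Lem:mj3} applies to positive repetition counts), which is fine.
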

\begin{proof}
The situation where $i\neq i_1$ is not interesting, since we get 
$$W_{v^{i}_j}(i_1,\dots,i_l;T)=W_{g^{i}_j}(i_1,\dots,i_l;G)=0$$
for any $j$. So we assume that $i=i_1$ and proceed by induction with respect to $l$.
The initial case $l=1$ is trivial, since we know that 
$$W_{v^{i_1}_j}(i_1;T)=W_{g^{i_1}_j}(i_1;G)=1$$
for all $i_1$ and $j$. Assume that the relations \eqref{Eq:th1} and \eqref{Eq:th11} hold whenever $l\leq k$ for some integer $k\geq 1$. Now consider a longer level sequence $(i_1,\dots,i_l)$ where $l=k+1$. There are two cases: $i_2 = i_1 - 1$ or $i_2 = i_1 + 1$ (in all other cases, the number of walks is $0$).
\begin{itemize}
 \item \textbf{Case 1:} Assume that $i_2 = i_1+1 = i+1$.
For $1\leq j \leq d_{i}$, we use $a_j$ as an abbreviation for the number of children of $v^i_j$ and $b_j$ for the number of children of $g^i_j$. Clearly, $a_j = \deg v_j^{i} -1$ and $b_j = \deg g_j^i - 1$ if $i \neq 1$, and $a_j = \deg v_j^i$, $b_j = \deg g_j^i$ if $i =1$. In view of the construction of greedy trees, we have
$$b_1 \geq b_2 \geq \cdots \geq b_{d_i},$$
and since $(a_1,a_2,\ldots,a_{d_i})$ is a permutation of $(b_1,b_2,\ldots,b_{d_i})$, it is clear that
\begin{equation}\label{Eq:Deg}
(a_1,a_2,\ldots,a_{d_i})  \preccurlyeq (b_1,b_2,\ldots,b_{d_i}).
\end{equation}
We also write $r_j$ and $s_j$ for the sums
$$r_j = \sum_{t=1}^j a_t \quad \text{and} \quad s_j = \sum_{t=1}^j b_t,$$
and $r_0 = s_0 = 0$. Now note that
$$
W_{v^{i}_j}(i_1,\dots,i_l;T) = \sum_{v^{i+1}_h \sim v_j^{i}} W_{v^{i+1}_h}(i_2,\dots,i_l;T) =
\sum_{h = r_{j-1}+1}^{r_j} W_{v^{i+1}_h}(i_2,\dots,i_l;T),
$$
since every walk with level sequence $(i_1,i_2,\ldots,i_l)$ starting at $v^i_j$ has to go to one of the children $v^{i+1}_h$ ($r_{j-1}+1 \leq h \leq r_j$) first.
Likewise,
$$
W_{g^{i}_j}(i_1,\dots,i_l;G) = \sum_{g^{i+1}_h \sim g_j^{i}} W_{g^{i+1}_h}(i_2,\dots,i_l;G) =
\sum_{h = s_{j-1}+1}^{s_j} W_{g^{i+1}_h}(i_2,\dots,i_l;G).
$$
Now the relation 
$$(W_{v^{i}_1}(i_1,\dots,i_l;T),\dots,W_{v^{i}_{d_{i}}}(i_1,\dots,i_l;T)) 
\preccurlyeq  (W_{g^{i}_1}(i_1,\dots,i_l;G),\dots,W_{g^{i}_{d_{i}}}(i_1,\dots,i_l;G))$$
as well as \eqref{Eq:th11}, follow from the induction hypothesis applied to the level sequence $(i_2,\dots,i_l)$ and the majorization inequality \eqref{Eq:Deg}, which also implies that $r_j \leq s_j$ for all $j$.

\item \textbf{Case 2:} Assume that $i_2 = i_1 - 1 = i-1$. This time, we write $a_j$ for the number of children of $v^{i-1}_j$ (which is either $\deg v^{i-1}_j$ or $\deg v^{i-1}_j - 1$) and $b_j$ for the number of children of $g^{i-1}_j$. The relation~\eqref{Eq:Deg} is still valid. Now we have
\begin{multline*}
(W_{v^{i}_1}(i_1,\dots,i_l;T),\dots,W_{v^{i}_{d_{i}}}(i_1,\dots,i_l;T))\\
= (W_{v^{i-1}_1}(i_2,\dots,i_l;T),\dots,W_{v^{i-1}_{d_{i-1}}}(i_2,\dots,i_l;T))*
  (a_1,\ldots,a_{d_{i-1}}),
\end{multline*}
since if $v^i_h$ is one of the $a_j$ children of $v^{i-1}_j$, a walk with level sequence $(i_1,\ldots,i_l)$ starting at $v^i_h$ has to start with a step to $v^{i-1}_j$, which means that
$$W_{v^i_h}(i_1,\dots,i_l;T) = W_{v^{i-1}_j}(i_2,\dots,i_l;T).$$
Likewise,
\begin{multline*}
 (W_{g^{i}_1}(i_1,\dots,i_l;G),\dots,W_{g^{i}_{d_{i}}}(i_1,\dots,i_l;G))\\
= (W_{g^{i-1}_1}(i_2,\dots,i_l;G),\dots,W_{g^{i-1}_{d_{i-1}}}(i_2,\dots,i_l;G))*
  (b_1,\ldots,b_{d_{i-1}}).
\end{multline*}
So \eqref{Eq:th1} and \eqref{Eq:th11} follow from \eqref{Eq:Deg} and the induction hypothesis by means of Lemma~\ref{Lem:mj3}. 
\end{itemize}
\end{proof}

Next we study closed walks: it turns out that a completely analogous statement holds.
\begin{lem}
\label{Lem:closed_wa_ver}
Let $T\in \mathcal{T}_D$ for some leveled degree sequence $D$ of a vertex-rooted forest, and let $G = G(D)$ be the associated greedy forest. Let 
$v_1^i,\dots,v_{d_i}^i$ be the vertices of $T$ at the $i^{\text{th}}$ level. 
Then for any level sequence of walks $(i_1,\dots,i_l)$, the following relations hold for all $i$:
\begin{align}
\label{Eq:th2}
 &(C_{v^{i}_1}(i_1,\dots,i_l;T),\dots,C_{v^{i}_{d_{i}}}(i_1,\dots,i_l;T))\preccurlyeq (C_{g^{i}_1}(i_1,\dots,i_l;G),\dots,C_{g^{i}_{d_{i}}}(i_1,\dots,i_l;G))
\end{align}
and
\begin{equation}
 \label{Eq:th22}
C_{g^{i}_1}(i_1,\dots,i_l;G)\geq\dots\geq C_{g^{i}_{d_{i}}}(i,\dots,i_l;G).
\end{equation}
\end{lem}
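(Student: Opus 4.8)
The plan is to reduce closed walks to open walks, so that Lemma~\ref{Lem:open_wa_ver} can be applied directly. Since adjacent vertices of a tree differ in level by exactly one, the level sequence of any walk is a lattice path with steps $\pm1$, and I would exploit the following geometric fact: a walk starting at a vertex $v$ reaches its minimum level $m=\min_s i_s$ only at the unique ancestor $u$ of $v$ on level $m$. Indeed, if the walk visited two distinct level-$m$ vertices, or a level-$m$ vertex that is not an ancestor of $v$, then it would have to traverse a lowest common ancestor lying strictly above level $m$, contradicting minimality.

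Using this, I would cut a closed walk from $v_j^i$ at the first index $t$ at which it attains level $m$; this index depends only on the level sequence. The initial segment $w_1\cdots w_t$ is an open walk from $v_j^i$ with level sequence $A=(i_1,\dots,i_t)$, and since $A$ reaches its minimum only in its last entry, every such walk ends at $u$; these segments are therefore counted by $W_{v_j^i}(A;T)$. The terminal segment $w_t\cdots w_l$ runs from $u$ to $v_j^i$, and reversing it gives a walk from $v_j^i$ with level sequence $B=(i_l,i_{l-1},\dots,i_t)$; the geometric fact again forces such walks to terminate at $u$, so reversal converts the constraint of ending at $v_j^i$ into an unconstrained walk from $v_j^i$, counted by $W_{v_j^i}(B;T)$. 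Since $t$ is fixed and the two segments meet only at the forced vertex $u$, they vary independently, which yields the factorization
\begin{equation*}
C_{v_j^i}(i_1,\dots,i_l;T)=W_{v_j^i}(A;T)\cdot W_{v_j^i}(B;T).
\end{equation*}
This identity holds verbatim in any rooted forest, and in particular for $G=G(D)$.

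With the factorization established, the two assertions follow mechanically. Applying \eqref{Eq:th1} of Lemma~\ref{Lem:open_wa_ver} to the level sequences $A$ and $B$ separately gives
\begin{align*}
(W_{v^i_1}(A;T),\dots,W_{v^i_{d_i}}(A;T))&\preccurlyeq(W_{g^i_1}(A;G),\dots,W_{g^i_{d_i}}(A;G)),\\
(W_{v^i_1}(B;T),\dots,W_{v^i_{d_i}}(B;T))&\preccurlyeq(W_{g^i_1}(B;G),\dots,W_{g^i_{d_i}}(B;G)),
\end{align*}
and Lemma~\ref{Lem:mj2} yields a majorization between the entrywise products. By the factorization, applied to both $T$ and $G$, these products are exactly the vectors of closed-walk counts, so \eqref{Eq:th2} follows. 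For \eqref{Eq:th22}, the greedy sequences $(W_{g^i_j}(A;G))_j$ and $(W_{g^i_j}(B;G))_j$ are non-increasing by \eqref{Eq:th11}, and the entrywise product of two non-increasing nonnegative sequences is again non-increasing.

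The only real work lies in the factorization, that is, in the tree-geometric fact that both halves of a closed walk are genuinely \emph{unconstrained} open walks from $v_j^i$ once the cut index $t$ is read off from the level sequence; everything downstream is a direct appeal to the majorization machinery of Section~\ref{Sec:p}. The remaining points are routine: one checks that the degenerate case $m=i$ (the walk never rising above its starting level, so that $t=1$, $A=(i)$ and $W_{v_j^i}(A;T)=1$) causes no difficulty, and that the cases $i\neq i_1$ or $i_1\neq i_l$ contribute zero on both sides exactly as in Lemma~\ref{Lem:open_wa_ver}.
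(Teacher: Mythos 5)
Your proof is correct, and it takes a genuinely different route from the paper's. The paper argues by induction on the length of the level sequence, with three cases: when $i_2=i_{2k}=i-1$, the closed-walk vectors are pulled down from the parent level via the repetition operation $*$ and Lemma~\ref{Lem:mj3}; when $i_2=i+1$, the walk is split at its \emph{first return} to level $i$, giving $C_{v}(i_1,\dots,i_{2k+1};T)=W_{v}(i_1,\dots,i_h;T)\,C_{v}(i_h,\dots,i_{2k+1};T)$, which is handled by Lemma~\ref{Lem:open_wa_ver}, the induction hypothesis and Lemma~\ref{Lem:mj2}; the third case follows by reversing the walk. You instead cut every closed walk exactly once, at the first visit to the \emph{global minimum} level of its level sequence, and obtain the factorization $C_{v}(i_1,\dots,i_l;T)=W_{v}(A;T)\,W_{v}(B;T)$ into two unconstrained open-walk counts; then \eqref{Eq:th2} and \eqref{Eq:th22} drop out of Lemma~\ref{Lem:open_wa_ver} and a single application of Lemma~\ref{Lem:mj2}, with no induction, no case analysis, and no appeal to $*$ or Lemma~\ref{Lem:mj3}. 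Both proofs rest on the same tree-geometric fact, namely that all visits of a walk to its minimum level occur at the unique ancestor of the starting vertex on that level: the paper invokes it only locally (``any walk with level sequence $(i_1,\dots,i_h)$ is closed''), you invoke it globally, and your justification via lowest common ancestors (every vertex on the tree path between two vertices is a cut vertex, hence is visited by any walk between them) is sound. What your approach buys is economy and transparency: the closed-walk lemma becomes an immediate corollary of the open-walk lemma, with the multiplicative structure exhibited in one stroke rather than excursion by excursion. What the paper's approach buys is reusability: its first-return decomposition and inductive template are exactly what reappear in the edge-rooted arguments of Lemmas~\ref{Lem:Start_Root} and~\ref{Lem:Closed_Pass}, where your global cut would fail as stated, since an edge-rooted tree has \emph{two} adjacent vertices at level $1$ and visits to the minimum level are then no longer forced to a single vertex.
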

\begin{proof}
As in the proof of Lemma~\ref{Lem:open_wa_ver}, we only need to prove the lemma for $i=i_1$.
The case when $l$ is even is trivial: in this case,
\begin{equation*}
\mathcal{C}_{v^{i}_j}(i_1,\dots,i_l;T)=\mathcal{C}_{g^{i}_j}(i_1,\dots,i_l;G)=\emptyset
\end{equation*}
for all $j$, since there are no closed walks of odd length in a forest. 

For the case of odd $l$, say $l=2l'-1$, the proof is similar to that of Lemma~\ref{Lem:open_wa_ver}:
We reason by induction with respect to $l'$. The case $l'=1$  is again trivial. Assume that the lemma holds 
for all $l'\leq k$ for some $k\geq 1$. Now consider a level sequence $(i_1,\dots,i_{2k+1})$. 
We must have $i_1 = i_{2k+1} = i$ and $i_2 = i \pm 1$ as well as $i_{2k} = i \pm 1$, the other possibilities are trivial.

\medskip

\noindent \textbf{Case 1:} If $i_2 = i_{2k} = i-1$, then, writing $a_j$ for the number of children of $v^{i-1}_j$ and $b_j$ for the number of children of $g^{i-1}_j$, we have
\begin{align*}
&(C_{v^{i}_1}(i_1,\dots,i_{2k+1};T),\dots,C_{v^{i}_{d_{i}}}(i_1,\dots,i_{2k+1};T)) \\
&= (C_{v^{i-1}_1}(i_2,\dots,i_{2k};T),\dots,C_{v^{i-1}_{d_{i-1}}}(i_2,\dots,i_{2k};T))*(a_1,\dots,a_{d_{i-1}})
\end{align*}
and
\begin{align*}
&(C_{g^{i}_1}(i_1,\dots,i_{2k+1};G),\dots,C_{g^{i}_{d_{i}}}(i_1,\dots,i_{2k+1};G)) \\
&= (C_{g^{i-1}_1}(i_2,\dots,i_{2k};G),\dots,C_{g^{i-1}_{d_{i-1}}}(i_2,\dots,i_{2k};G))*(b_1,\dots,b_{d_{i-1}})
\end{align*}
for the same reason as in Case 2 of Lemma~\ref{Lem:open_wa_ver}.
Hence \eqref{Eq:th2} and \eqref{Eq:th22} can be 
obtained using Lemma~\ref{Lem:mj3}, the induction hypothesis and \eqref{Eq:Deg}.

\medskip

\noindent \textbf{Case 2:} Assume that $i_2 = i_1+1 = i+1$. Let $h$ be the smallest integer such that $h > 1$ and $i_h=i_1 = i$. If $h$ does not exist, then there is no closed walk with level sequence $(i_1,\dots,i_{2k+1})$, so we can ignore this case. By the definition of $h$ and the assumption that $i_2 = i+1$, we know that $i=\min (i_1,\dots,i_h)$. Clearly, 
any walk with level sequence $(i_1,\dots,i_h)$ is closed. Hence for all $j$, any element of 
$\mathcal{C}_{g^{i}_j}(i_1,\dots,i_{2k+1};G)$ can be decomposed (uniquely) into a first part that is an element of $\mathcal{C}_{g^{i}_j}(i_1,\dots,i_h;G)$ and a second part that is an element of $\mathcal{C}_{g^{i}_j}(i_h,\dots,i_{2k+1};G)$.
Similarly, an element of 
$\mathcal{C}_{v^{i}_j}(i_1,\dots,i_{2k+1};T)$ splits (uniquely) into two parts:
a first part in $\mathcal{C}_{v^{i}_j}(i_1,\dots,i_h;T)$ and a second part in
$\mathcal{C}_{v^{i}_j}(i_h,\dots,i_{2k+1};T)$. This implies that 
\begin{align*}
C_{g^{i}_j}(i_1,\dots,i_{2k+1};G)
&= C_{g^{i}_j}(i_1,\dots,i_{h};G)C_{g^{i}_j}(i_h,\dots,i_{2k+1};G)\\
&= W_{g^{i}_j}(i_1,\dots,i_{h};G)C_{g^{i}_j}(i_h,\dots,i_{2k+1};G)
\end{align*}
and
\begin{align*}
C_{v^{i}_j}(i_1,\dots,i_{2k+1};T)
&= C_{v^{i}_j}(i_1,\dots,i_{h};T)C_{v^{i}_j}(i_h,\dots,i_{2k+1};T)\\
&= W_{v^{i}_j}(i_1,\dots,i_{h};T)C_{v^{i}_j}(i_h,\dots,i_{2k+1};T).
\end{align*}
Therefore, we can use Lemma~\ref{Lem:open_wa_ver}, the induction hypothesis and Lemma~\ref{Lem:mj2} 
to deduce \eqref{Eq:th2} and \eqref{Eq:th22}; the argument remains valid even if $h=2k+1$, since then the second factor in the formulas above is simply $1$.

\medskip

\noindent \textbf{Case 3:} Assume that $i_{2k} = i_{2k+1}+1 = i+1$. Then the sequence $(i_{2k+1},\dots,i_1)$ satisfies the condition of Case 2. Hence, for this case, \eqref{Eq:th2} and \eqref{Eq:th22} follow from the fact that for any $j$ we have
$$C_{g^{i}_j}(i_1,\dots,i_{2k+1};G) = C_{g^{i}_j}(i_{2k+1},\dots,i_1;G)$$
and 
$$C_{v^{i}_j}(i_1,\dots,i_{2k+1};T) = C_{v^{i}_j}(i_{2k+1},\dots,i_1;T).$$
This completes the proof, since  there are no closed walks in any other cases.
\end{proof}

The following theorem is a direct consequence of the two Lemmas~\ref{Lem:open_wa_ver} and
\ref{Lem:closed_wa_ver} and the relations \eqref{Eq:op} and \eqref{Eq:cl}.

\begin{thm}
\label{Thm:Main_ver_root}
Let $D$ be a leveled degree sequence of a vertex-rooted forest and $G(D)$ the associated level greedy forest. Then for any nonnegative integer $k$ and all $T\in \mathcal{T}_D$, we have
$$W(k;T)\leq W(k;G(D))$$
and
$$\M_k(T)=C(k;T)\leq C(k;G(D))=\M_k(G(D)).$$
\end{thm}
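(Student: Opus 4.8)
The plan is to deduce the global counts $W(k;\cdot)$ and $C(k;\cdot)$ from the entrywise majorization relations \eqref{Eq:th1} and \eqref{Eq:th2} by summing over all admissible level sequences. The single observation that makes this immediate is that the majorization used here is the \emph{weak} one: if $(b_1,\dots,b_m)\preccurlyeq(a_1,\dots,a_m)$, then applying the defining inequality with the full index range gives $\sum_{j=1}^m b_j\le\sum_{j=1}^m a_j$. So entrywise majorization already implies domination of the sums, which is all that the two inequalities in the theorem require.

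First I would set up the bookkeeping for open walks. Every walk of length $k$ starts at a unique vertex, so by \eqref{Eq:op} the set $\mathcal{W}(k;T)$ is the disjoint union of the $\mathcal{W}_v(k;T)$; refining each of these by the uniquely determined level sequence $(i_1,\dots,i_k)$ (the list of levels of the successive vertices) yields
\begin{equation*}
W(k;T)=\sum_{(i_1,\dots,i_k)}\ \sum_{v\in V(T)}W_v(i_1,\dots,i_k;T).
\end{equation*}
For a fixed level sequence only vertices at level $i_1$ contribute, so the inner sum equals $\sum_{j=1}^{d_{i_1}}W_{v^{i_1}_j}(i_1,\dots,i_k;T)$, which is exactly the sum of the entries of the left-hand sequence in \eqref{Eq:th1} with $i=i_1$. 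By Lemma~\ref{Lem:open_wa_ver} this sequence is majorized by the corresponding greedy-forest sequence, and hence its entries sum to at most $\sum_{j=1}^{d_{i_1}}W_{g^{i_1}_j}(i_1,\dots,i_k;G)$. Summing this scalar inequality over all level sequences gives $W(k;T)\le W(k;G(D))$. Only finitely many level sequences contribute, so the sums are well defined, and since $T$ and $G(D)$ share the leveled degree sequence $D$ they admit the same admissible level sequences.

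The closed-walk statement follows in exactly the same way, using \eqref{Eq:cl} for the disjoint-union decomposition, replacing $W_v$ by $C_v$, and invoking the majorization relation \eqref{Eq:th2} of Lemma~\ref{Lem:closed_wa_ver} in place of \eqref{Eq:th1}; summing the resulting sum-domination over all level sequences gives $C(k;T)\le C(k;G(D))$. Finally, the identity $\M_k(T)=C(k;T)$ is the trace identity $\tr(A^k)=\M_k(T)$ recalled in the introduction, which equates the $k$-th spectral moment with the number of closed walks of length $k$; applying it to $G(D)$ as well completes the chain $\M_k(T)=C(k;T)\le C(k;G(D))=\M_k(G(D))$.

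There is essentially no hard step once the two lemmas are in hand — the theorem really is a direct consequence, as claimed. The only point I would flag as a (minor) obstacle is the passage from entrywise majorization to the scalar inequality of sums: one must use that $\preccurlyeq$ encodes \emph{weak} majorization rather than ordinary majorization with equal sums, precisely because the total number of walks of a given length genuinely differs between $T$ and $G(D)$. Everything else is the routine reorganization of a disjoint union into a double sum over vertices and level sequences.
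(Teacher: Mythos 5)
Your proposal is correct and matches the paper's own treatment: the paper likewise presents Theorem~\ref{Thm:Main_ver_root} as a direct consequence of Lemmas~\ref{Lem:open_wa_ver} and \ref{Lem:closed_wa_ver} together with \eqref{Eq:op} and \eqref{Eq:cl}, i.e.\ summing the (weak) majorization inequalities over all level sequences, with the trace identity giving $\M_k = C(k;\cdot)$. The only nitpick is an off-by-one in your indexing (a walk of length $k$ has $k+1$ vertices, hence a level sequence of length $k+1$), which does not affect the argument.
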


It turns out that one has strict inequality for sufficiently large even $k$, which is shown in the following lemma:

\begin{lem}\label{Lem:Strict}
Let $D$ be a leveled degree sequence of a vertex-rooted forest and $G = G(D)$ the associated level greedy forest. If $T \in \mathcal{T}_D$ is not isomorphic (as a rooted forest) to $G$, then there exists an integer $k_0$ such that
$$\M_k(T)=C(k;T) <  C(k;G(D))=\M_k(G(D))$$
for all even $k \geq k_0$.
\end{lem}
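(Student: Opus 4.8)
The non-strict inequality $C(k;T)\le C(k;G)$ is already contained in Theorem~\ref{Thm:Main_ver_root}, so the whole content of the lemma is the \emph{strictness} for all large even $k$. The plan is to deduce this from a single strict inequality by a dominant-term argument. Write $\Delta_k=\M_k(G)-\M_k(T)$; by Theorem~\ref{Thm:Main_ver_root} we have $\Delta_k\ge 0$ for every $k$, and $\Delta_k=0$ for odd $k$ since forests are bipartite. For even $k=2s$ I would group the eigenvalues of $G$ and $T$ according to the value of $\lambda^2$: if $\mu_1>\mu_2>\dots>\mu_p\ge 0$ are the distinct squares of eigenvalues occurring in either graph and $e_r$ denotes the multiplicity of $\mu_r$ among the $\lambda_i(G)^2$ minus its multiplicity among the $\lambda_i(T)^2$, then $\Delta_{2s}=\sum_{r=1}^{p}e_r\mu_r^{s}$.

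Suppose the $e_r$ are not all zero and let $r^\ast$ be the least index with $e_{r^\ast}\neq 0$; one checks $\mu_{r^\ast}>0$ (the two forests have the same number $n$ of eigenvalues, so they cannot differ only in the multiplicity of $0$). Then $\Delta_{2s}=e_{r^\ast}\mu_{r^\ast}^{s}\bigl(1+o(1)\bigr)$ as $s\to\infty$, and $\Delta_{2s}\ge 0$ forces $e_{r^\ast}>0$; hence $\Delta_{2s}>0$ for all large $s$. Thus it suffices to prove that $T$ and $G$ are \emph{not cospectral}, i.e.\ that $\M_{k_1}(T)<\M_{k_1}(G)$ for at least one (even) $k_1$.

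To produce such a $k_1$ I would argue combinatorially. Decomposing $\M_k(\cdot)=C(k;\cdot)$ according to the level sequence $\sigma=(i_1,\dots,i_k)$ of the closed walk and summing the majorization relation \eqref{Eq:th2} of Lemma~\ref{Lem:closed_wa_ver} over the vertices of each level shows that $C(\sigma;G)-C(\sigma;T)\ge 0$ for \emph{every} $\sigma$. Consequently cospectrality would force $C(\sigma;G)=C(\sigma;T)$ for every closed-walk level sequence $\sigma$, and it is enough to exhibit one $\sigma$ for which this fails. Since $T\not\cong G$ as rooted forests, there is a topmost level $i_0$ at which the assignment of subtrees to parents deviates from the greedy one; I would choose $\sigma$ to descend from a root to a vertex of level $i_0$ and then repeatedly probe the subtree hanging there, so that the resulting count reflects the greedy packing of the heaviest subtrees under the heaviest-degree ancestors.

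The step I expect to be the main obstacle is exactly this last one, for a structural reason: the relation $\bigl(C_{v^{i}_j}(\sigma;T)\bigr)_j\preccurlyeq\bigl(C_{g^{i}_j}(\sigma;G)\bigr)_j$ can be strict as a majorization while the \emph{sums} $C(\sigma;T)$ and $C(\sigma;G)$ still coincide, so deviation from greedy does not automatically yield a strict total. One therefore has to select $\sigma$ so that the rearrangement inequality underlying Lemma~\ref{Lem:mj3} (and Lemma~\ref{Lem:mj2}) is strict in the summed value; this requires both that the children-degree sequences are genuinely permuted relative to the greedy order at level $i_0$ and that the inner walk counts entering the $\ast$-product are not all equal, which can be arranged by taking $\sigma$ long and deep enough. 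In the connected case this can be bypassed by instead proving that the greedy tree is the \emph{strict} maximizer of the spectral radius $\rho$ in $\mathcal{T}_D$ (subtree swaps toward the greedy order strictly increase the Rayleigh quotient, the Perron vector being positive and simple), giving $\M_{2s}(T)\le n\,\rho(T)^{2s}<\rho(G)^{2s}\le\M_{2s}(G)$ directly; the genuinely delicate situation is the disconnected one, where distinct components of $T$ and $G$ need not even share a degree sequence and $\rho(T)=\rho(G)$ is possible, so that one must return to the dominant-term reduction above and establish non-cospectrality through the refined walk counts.
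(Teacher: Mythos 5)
Your opening reduction is correct and is genuinely different from the paper's mechanism for getting ``all large even $k$'': writing $\Delta_{2s}=\M_{2s}(G)-\M_{2s}(T)=\sum_r e_r\mu_r^s$ over the distinct squared eigenvalues, noting that the leading nonzero coefficient must be positive because $\Delta_{2s}\ge 0$, and observing that the two forests cannot differ only in the multiplicity of the eigenvalue $0$, does reduce the lemma to exhibiting a \emph{single} even $k_1$ (equivalently, by summing \eqref{Eq:th2} over a level, a single level sequence $\sigma$) with $C(\sigma;T)<C(\sigma;G)$. The paper instead takes one good level sequence and pads it with oscillations between levels $h_1$ and $h_1+1$ to realize every even length $k\ge k_0$, which yields an explicit $k_0$ where your argument gives a non-effective one; both are legitimate, and yours is arguably cleaner.

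However, the proposal never actually produces that one level sequence, and this is the entire substance of the lemma. You correctly diagnose the obstacle (the majorization in Lemma~\ref{Lem:closed_wa_ver} can be strict while the \emph{sums} agree), but the resolution offered --- that strictness ``can be arranged by taking $\sigma$ long and deep enough'' --- is an assertion of the conclusion, not an argument. The paper's proof is precisely this missing construction: it chooses $h_2$ minimal so that the top $h_2$ levels of $T$ are not level greedy and $h_1$ maximal so that levels $h_1,\dots,h_2$ are still not level greedy (forcing each component of the restriction to be level greedy while the union is not); it then shows that the closed walks descending from level $h_2$ to $h_1$ and back \emph{twice} are counted by $\sum_i p_i^2$ in $T$ and $\sum_i q_i^2$ in $G$, and splits into two cases: if $(p_i)\ne(q_i)$, strict convexity of squares under majorizing transfers gives strictness; if $(p_i)=(q_i)$, a second, more delicate argument is needed, comparing $\sum_i p_ip_i'$ with $\sum_i q_iq_i'$ via the level-$(h_2-1)$ counts and deriving an isomorphism contradiction with the choice of $h_1,h_2$. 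None of this case analysis (in particular the equal-descendant-count case, which is exactly where ``strict majorization but equal sums'' threatens) appears in your proposal. Your fallback via strict maximization of the spectral radius does not close the gap either: the result of \cite{biyikoglu2008graphs} concerns trees with a given \emph{degree sequence}, not rooted forests with a given \emph{leveled} degree sequence, so it is not citable here; proving strict Rayleigh-quotient increase in this rooted, level-constrained setting is essentially as hard as the lemma itself; and, as you concede, the disconnected case (where $\rho(T)=\rho(G)$ can occur) throws you back onto the unproven combinatorial step.
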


\begin{proof}
It suffices to find one specific level sequence for which we have strict inequality. We take $h_2$ to be the smallest positive integer such that $T$, restricted to the first $h_2$ levels, is not isomorphic to a level greedy rooted forest. Then let $h_1$ be the largest positive integer such that the restriction of $T$ to levels $h_1,h_1+1,\ldots,h_2$ (which we denote by $P$) is still not isomorphic to a greedy rooted forest.

From now on, we only work with the restricted forest $P$. Let $r$ be the number of its roots and
$P_1,P_2,\ldots,P_r$ the components of $P$. Each of them is a level greedy tree: if not, we could remove the root to obtain a rooted forest that is not level greedy, contradicting the maximality of $h_1$. However, by assumption, their union is not a level greedy forest.

Now let $p_1,p_2,\ldots,p_r$ be the number of descendants of the $r$ roots at level $h_2$ ($p_j$ descendants in component $P_j$). The analogous numbers for the greedy tree are $q_1,q_2,\ldots,q_r$, and we call the corresponding components of the restriction of $G$ to the same levels $Q_1,Q_2,\ldots,Q_r$.

We assume, without loss of generality, that $p_1 \geq p_2 \geq \cdots \geq p_r$ and $q_1 \geq q_2 \geq \cdots \geq q_r$. From the construction of level greedy forests, we know that
$$(p_1,p_2,\ldots,p_r) \preccurlyeq (q_1,q_2,\ldots,q_r).$$
In fact, this is a special case of Lemma~\ref{Lem:open_wa_ver}, since $p_1,\ldots,p_r$ and $q_1,\ldots,q_r$ also count walks with level sequence $(h_1,h_1+1,\ldots,h_2)$. The number of closed walks with level sequence 
$$(h_2,h_2-1,\ldots,h_1+1,h_1,h_1+1,\ldots,h_2-1,h_2,h_2-1,\ldots,h_1+1,h_1,h_1+1,\ldots,h_2)$$
in $T$ and $G$ are
$$p_1^2 + p_2^2 + \cdots + p_r^2\quad \text{and}\quad q_1^2 + q_2^2 + \cdots + q_r^2$$
respectively: such walks start at level $h_2$, move up to the root, return to level $h_2$, then back to the root, and back to the starting point. They are thus completely determined by the two vertices at level $h_2$ (not necessarily distinct), which have to have the same root.

We suppose first that $p = (p_1,p_2,\ldots,p_r) \neq (q_1,q_2,\ldots,q_r) = q$. Let $i$ be the first index and $j$ the last index where the two differ. Since $q$ majorizes $p$ and the two have the same sums, we must have $q_i > p_i$ and $q_j < p_j$. Let $\epsilon = \min(q_i-p_i,p_j-q_j)$, and replace $p_i$ by $p_i+\epsilon$ and $p_j$ by $p_j-\epsilon$. Then the sum of squares increases by
$$(p_i+\epsilon)^2 - p_i^2 + (p_j-\epsilon)^2 - p_j^2 = 2\epsilon(p_i-p_j+\epsilon) > 0.$$
Repeating this process, we can transform $p$ into $q$, which shows that
$$p_1^2 + p_2^2 + \cdots + p_r^2 < q_1^2 + q_2^2 + \cdots + q_r^2,$$
and we are done in that we have found a level sequence such that $G$ has strictly more closed walks than $T$. The same argument applies (mutatis mutandis) to level sequences of the form
\begin{multline*}
(h_2,h_2-1,\ldots,h_1+1,h_1,h_1+1,h_1,h_1+1,h_1,\ldots,  \\
h_1,h_1+1,\ldots,h_2,h_2-1,\ldots,h_1,h_1+1,\ldots,h_2),
\end{multline*}
completing the proof in the case that $p$ and $q$ are not identical (with $k_0 = 4(h_2-h_1)$).

Let us now assume that $p = (p_1,p_2,\ldots,p_r) = (q_1,q_2,\ldots,q_r) = q$, and let $l$ be the last index such that $p_l = q_l \neq 0$. By our choice of $h_2$, the restrictions of $T$ and $G$ to levels $h_1,h_1+1,\ldots,h_2-1$ are isomorphic: they are both level greedy forests consisting of $r$ components. If one component is larger than another, then the number of vertices at level $h_2-1$ is greater as well, and if two components have the same number of vertices at level $h_2-1$, then they are isomorphic by the construction of greedy trees.

Let $m$ be the number of vertices at level $h_2-1$ in the largest component. Then $q_1$ is the sum of the highest $m$ degrees at level $h_2-1$. The only way how $p_1$ can be equal to $q_1$ is thus that $P_1$ and $Q_1$ have the same number of vertices at level $h_2-1$, so they have to be isomorphic (both are known to be level greedy as well!). Likewise, $P_2$ and $Q_2$ have to be isomorphic, etc. The only possible exception are $P_l$ and $Q_l$, the last components with vertices at level $h_2$: here, some vertices in $Q_l$ at level $h_2-1$ might be leaves, so $P_l$ could be smaller than $Q_l$.

Now let $p_1',p_2',\ldots,p_r'$ and $q_1',q_2',\ldots,q_r'$ be the number of vertices at level $h_2-1$ in $P_1,P_2,\ldots,P_r$ and $Q_1,Q_2,\ldots,Q_r$ respectively. The number of closed walks with level sequence
$$(h_2-1,\ldots,h_1+1,h_1,h_1+1,\ldots,h_2-1,h_2,h_2-1,\ldots,h_1+1,h_1,h_1+1,\ldots,h_2-1)$$
in $T$ and $G$ are
$$p_1p_1' + p_2p_2' + \cdots + p_rp_r'\quad \text{and}\quad q_1q_1' + q_2q_2' + \cdots + q_rq_r'$$
respectively, by the same reasoning as before. We know that $p_ip_i' = q_iq_i'$ for $i < l$ and $p_ip_i' = q_iq_i' = 0$ for $i > l$, thus the difference between the two is
$$(q_1q_1' + q_2q_2' + \cdots + q_rq_r') - (p_1p_1' + p_2p_2' + \cdots + p_rp_r') = q_lq_l' - p_lp_l' = q_l(q_l'-p_l').$$
If $q_l' = p_l'$, then the components $P_l$ and $Q_l$ up to level $h_2-1$ have to be isomorphic, and since both are level greedy up to level $h_2$ as well, they must be isomorphic. But then $T$ and $G$, restricted to levels $h_1,h_1+1,\ldots,h_2$, are isomorphic, contradicting our choice of $h_1$ and $h_2$. Thus $q_l' > p_l'$, which means that we have again found a suitable level sequence. Once again, one can generalize to
\begin{multline*}
(h_2-1,\ldots,h_1+1,h_1,h_1+1,h_1,h_1+1,h_1,\ldots,  \\
h_1,h_1+1,\ldots,h_2,h_2-1,\ldots,h_1,h_1+1,\ldots,h_2-1),
\end{multline*}
to show that we have strict inequality $C(k;T) < C(k;G)$ for all even $k \geq k_0$, now with $k_0 = 4(h_2-h_1)-2$.
\end{proof}

\subsection{Edge rooted trees}
\label{Sec:edge_r_t}
As we will see at the end of this subsection, Theorem~\ref{Thm:Main_ver_root} still holds if we 
consider edge-rooted trees instead of vertex-rooted trees.

For any set $\mathcal{A}$ of walks in a graph and any vertex $v$ and edge $e$ of the same graph, we denote by $\mathcal{A}^e$ and by $\mathcal{A}^v$ the subsets of $\mathcal{A}$ that only contain walks passing through $e$ and $v$, respectively.
Instead of $(\mathcal{A}^e)^{e'}$ we simply write $\mathcal{A}^{e,e'}$. Similarly, $(\mathcal{A}^e)^v=(\mathcal{A}^v)^e=\mathcal{A}^{v,e}=\mathcal{A}^{e,v}.$
For any two adjacent vertices $u$ and $v$ in a graph $G$, we define $\mathcal{C}_{u,v}(k;G)$ to be the set and $C_{u,v}(k;G)$ the number of all closed 
walks of length $k$ starting from the edge $uv$ in direction from $u$ to $v$.

Different combinations of these notations are possible. For example, for some edge $uv$ in a graph $G$ and another edge $e$, $\mathcal{C}_{u,v}^{e}(k;G)$ stands for the set of closed walks of length $k$ in $G$ starting at $u$, using the edge $uv$ at the first step and passing through $e$ at a later stage. 

\begin{lem}
\label{Lem:Adj_uv}
Let $u$ and $v$ be two adjacent vertices in a graph $G$, and let $e$ be an edge in $G$. Then for all nonnegative integers $k$ we 
have 
$$
C_{u,v}(k;G)=C_{v,u}(k;G)
\quad\text{and}\quad
C^e_{u,v}(k;G)=C^e_{v,u}(k;G).
$$
\end{lem}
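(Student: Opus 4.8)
The plan is to establish both identities simultaneously by exhibiting a single bijection between the underlying sets $\mathcal{C}_{u,v}(k;G)$ and $\mathcal{C}_{v,u}(k;G)$ that also preserves the multiset of edges traversed by a walk. The natural candidate is \emph{reversal}, but a little care is needed: if $W = w_1w_2\cdots w_{k+1}$ is a closed walk with $w_1 = w_{k+1} = u$ and $w_2 = v$, then reading it backwards produces a closed walk based at $u$ whose \emph{last} step is $v \to u$, not one whose \emph{first} step is $v \to u$. I would therefore combine reversal with a single cyclic shift and define
\begin{equation*}
\Phi(W) = (v,u,w_k,w_{k-1},\dots,w_3,v),
\end{equation*}
that is, start at $v$, traverse the edge $uv$ in the direction $v \to u$, and then follow the remaining steps of $W$ in reverse order.

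First I would check that $\Phi(W)$ is a legitimate element of $\mathcal{C}_{v,u}(k;G)$: each consecutive pair is an edge of $G$ (immediate, since the corresponding pair of $W$ read in the opposite direction is an edge), the walk has length $k$, it returns to its starting vertex $v$, and its first step uses $uv$ in the direction $v\to u$. Next I would verify that $\Phi$ is a bijection by writing down its inverse, namely the entirely symmetric operation applied to a walk of $\mathcal{C}_{v,u}(k;G)$; a direct computation gives $\Phi^{-1}\Phi(W)=W$. This proves $C_{u,v}(k;G)=C_{v,u}(k;G)$.

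For the second identity I would observe that $\Phi$ changes neither the undirected edges used by a walk nor how often each of them is used: it merely reverses the order in which the steps are taken and the direction of each step, so the multiset of edges of $\Phi(W)$ coincides with that of $W$. Consequently $W$ passes through a given edge $e$ if and only if $\Phi(W)$ does, and hence $\Phi$ restricts to a bijection between $\mathcal{C}^e_{u,v}(k;G)$ and $\mathcal{C}^e_{v,u}(k;G)$, yielding $C^e_{u,v}(k;G)=C^e_{v,u}(k;G)$.

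The only point requiring genuine attention — and the step I expect to be the main (if modest) obstacle to a fully clean write-up — is the indexing in the definition of $\Phi$ together with the small cases: for $k=0$ both sets are empty since a length-$0$ closed walk traverses no edge, and for $k=1$ they are empty as well because $u\neq v$ forbids a closed walk of length $1$ beginning with $uv$. The displayed formula for $\Phi$ should therefore be read as describing the generic situation $k\geq 2$, with these boundary cases recorded separately and disposed of trivially.
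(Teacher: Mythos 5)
Your proof is correct and rests on the same idea as the paper's: the time-reversal symmetry of walks. The paper phrases it by identifying both counts with the number of walks of length $k-1$ from $u$ to $v$ (treating $e=uv$ as a separate trivial case), while your single shift-and-reverse bijection $\Phi$ handles both identities at once; the difference is purely one of packaging.
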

\begin{proof}
Both $C_{u,v}(k;G)$ and $C_{v,u}(k;G)$ are equal to the number of walks of length $k-1$ starting from $u$ and ending at $v$ (which is clearly the same as the number of walks of length $k-1$ starting from $v$ and ending at $u$).

If $e\neq uv$, then both $C^e_{u,v}(k;G)$ and 
$C^e_{v,u}(k;G)$ are equal to the number of walks of length $k-1$ starting 
from $u$, passing through $e$ and ending at $v$. If $e=uv$, then clearly $C^{e}_{u,v}(k;G)=C_{u,v}(k;G)$, and we are done.
\end{proof}

For any (edge- or vertex-) rooted tree $T$ we denote by $\ro(T)$ the root of $T$. We extend the 
notation $\mathcal{C}_{v}(k;T)$ and denote by $\mathcal{C}_{e}(k;T)$ the set of walks of 
length $k$ in $T$ which start with the edge $e$ (in either direction). As usual, $C_{v}(k;T)$ and $C_{e}(k;T)$ denote their cardinalities. If $T$ is an edge-rooted tree such that $u$ and 
$v$ are the ends of $\ro(T)$, we know by Lemma~\ref{Lem:Adj_uv} that
$$
C_{\ro(T)}(k;T) = C_{u,v}(k;T) + C_{v,u}(k;T)=2 C_{u,v}(k;T)=2 C_{v,u}(k;T).
$$
\begin{lem}
\label{Lem:Start_Root}
Let $D$ be a leveled degree sequence of an edge-rooted tree and $G = G(D)$ the associated edge-rooted greedy tree. For any element $T\in \mathbb{T}_D$ we have
$$C_{\ro(T)}(k;T)\leq C_{\ro(G)}(k;G)$$ for any nonnegative integer $k$.
\end{lem}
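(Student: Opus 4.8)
The plan is to reduce the statement to a single coefficientwise inequality between two products of generating functions, and then to an optimization over how the branches are distributed between the two ends of the root edge. Write $u,v$ for the ends of $\ro(T)$. By Lemma~\ref{Lem:Adj_uv} we have $C_{\ro(T)}(k;T)=2C_{u,v}(k;T)$, so it suffices to bound $C_{u,v}(k;T)$. Deleting $\ro(T)$ splits $T$ into two rooted trees $T_u,T_v$, and I set $\Phi_u(x)=\sum_{j\ge0}C_u(j;T_u)x^j$ and $\Phi_v(x)=\sum_{j\ge0}C_v(j;T_v)x^j$, the generating functions of closed walks based at the two roots that stay inside the respective component. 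A walk counted by $C_{u,v}(k;T)$ begins with the step $u\to v$ and crosses $\ro(T)$ an even number $2m$ of times ($m\ge1$); cutting it at the crossings exhibits it as an alternating concatenation of $m$ closed walks at $v$ in $T_v$ and $m$ closed walks at $u$ in $T_u$, together with the $2m$ crossing steps, so that
\[
\sum_{k\ge0}C_{u,v}(k;T)\,x^k=\sum_{m\ge1}\bigl(x^2\Phi_u\Phi_v\bigr)^m=\frac{x^2\Phi_u\Phi_v}{1-x^2\Phi_u\Phi_v},
\]
and likewise for $G$ with $\Phi_{g_1},\Phi_{g_2}$. Since $t/(1-t)$ has nonnegative coefficients, it is enough to show that $\Phi_u\Phi_v$ is dominated coefficientwise by $\Phi_{g_1}\Phi_{g_2}$.

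I would prove this coefficientwise domination by induction on the height of $T$. Let $A_1,\dots,A_{d_1}$ be the branches at the children of $u$ and $B_1,\dots,B_{d_2}$ those at the children of $v$, with $d_1\ge d_2$, and let $\varphi(\cdot)$ denote the root-closed-walk generating function of a rooted tree. A first-return decomposition at each root gives $\Phi_u=(1-\alpha)^{-1}$ and $\Phi_v=(1-\beta)^{-1}$ with $\alpha=x^2\sum_i\varphi(A_i)$ and $\beta=x^2\sum_j\varphi(B_j)$, so $\Phi_u\Phi_v=\bigl((1-\alpha)(1-\beta)\bigr)^{-1}$, which is coefficientwise increasing in $\alpha$ and in $\beta$. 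The branches rooted at the level-$2$ vertices form a rooted forest to which Lemma~\ref{Lem:closed_wa_ver} applies; in its greedy version the root generating functions are coefficientwise ordered, $\psi_1\ge\psi_2\ge\cdots$ (by \eqref{Eq:th22}, restricted to level sequences all of whose entries are at least $2$), and they are distributed so that $g_1$ receives the $d_1$ heaviest and $g_2$ the remaining $d_2$. A clean auxiliary fact is that for a \emph{fixed} multiset of branch generating functions the greedy (most unbalanced) distribution maximizes $\Phi_u\Phi_v$: the total $\alpha+\beta$ is then the same for every distribution, so minimizing $\alpha\beta=x^4\Sigma_S\Sigma_{S^c}$ is what matters, and for any size-$d_1$ block $S$ with complement $S^c$,
\[
\Sigma_S(\tau-\Sigma_S)-\Sigma_{S^G}(\tau-\Sigma_{S^G})=(\Sigma_S-\Sigma_{S^G})(\Sigma_{S^c}-\Sigma_{S^G})\ge0
\]
coefficientwise, where $\Sigma_S=\sum_{t\in S}\psi_t$, $\tau=\sum_t\psi_t$, and $S^G$ is the heaviest-$d_1$ block; both factors are coefficientwise nonpositive because $\Sigma_{S^G}$ dominates every $d_1$-sum and every $d_2$-sum of the $\psi_t$. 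Hence $\alpha^G\beta^G\le\alpha\beta$ and the greedy split is optimal among splits of the same branches.

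The remaining, and I expect hardest, point is that the two inputs to the induction pull in opposite directions and must be combined. Replacing each $A_i,B_j$ by the greedy tree of the same degree sequence enlarges each individual branch generating function (this is the single-root case of Lemma~\ref{Lem:closed_wa_ver}, which does give honest coefficientwise domination), yet Lemma~\ref{Lem:closed_wa_ver} only supplies a \emph{majorization} of the whole sibling vector and not a sorted term-by-term domination, while passing to the greedy split simultaneously makes the two sides more unbalanced. After reducing, via the nonnegative-coefficient identity $(D')^{-1}-D^{-1}=(D-D')(DD')^{-1}$ with $D=(1-\alpha)(1-\beta)$ and $D'=(1-\alpha^G)(1-\beta^G)$ (valid since all four $\Phi$'s have nonnegative coefficients), to proving $D'\le D$ coefficientwise, one is left with an inequality of the shape $(\alpha+\beta)-(\alpha^G+\beta^G)\le\alpha\beta-\alpha^G\beta^G$: the left side is controlled by the majorization of total sums, whereas the cross term on the right has to be estimated through the induction hypothesis applied to the pairwise products of child-branch generating functions one level down, where the larger greedy branches tend to raise $\alpha\beta$ and the more unbalanced greedy split tends to lower it. Reconciling these opposing contributions is the crux, and I would handle it by carrying a strengthened inductive hypothesis stated directly for group-sum generating functions of a rooted forest, so that the branch improvement and the redistribution are absorbed into one induction on the number of levels.
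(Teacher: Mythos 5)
Your reduction of the lemma to the coefficientwise inequality $\Phi_u\Phi_v\le\Phi_{g_1}\Phi_{g_2}$ is sound (the crossing decomposition, the monotonicity of $t\mapsto t/(1-t)$, and the identity $(D')^{-1}-D^{-1}=(D-D')(DD')^{-1}$ are all used correctly), and your argument for a \emph{fixed} multiset of coefficientwise-ordered branches---that the greedy split minimizes $\alpha\beta$ and hence maximizes $\Phi_u\Phi_v$---is also correct. But the proof has a genuine gap exactly where you flag it, and it is not a gap your invoked tools can fill. For a general $T\in\mathbb{T}_D$ the branches hanging off $u$ and $v$ are \emph{not} the branches of $G$ redistributed; only the leveled degree sequence of the branch forest is shared. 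Lemma~\ref{Lem:closed_wa_ver} then supplies only a majorization $\preccurlyeq$ of the vector of branch counts, and majorization is the wrong kind of control for your denominator $D=1-(\alpha+\beta)+\alpha\beta$: it bounds the linear term ($\alpha+\beta\le\alpha^G+\beta^G$), but the two effects entering the cross term have opposite signs---replacing branches by heavier greedy ones pushes $\alpha^G\beta^G$ up, while the more unbalanced greedy split pushes it down---and nothing in Lemmas~\ref{Lem:mj1}--\ref{Lem:mj3}, nor anything you prove, controls their difference. The ``strengthened inductive hypothesis \dots for group-sum generating functions'' that would yield $(\alpha+\beta)-(\alpha^G+\beta^G)\le\alpha\beta-\alpha^G\beta^G$ is never formulated, so the central claim $\Phi_u\Phi_v\le\Phi_{g_1}\Phi_{g_2}$---which is in fact \emph{stronger} than the lemma, since $t\mapsto t/(1-t)$ is monotone only in one direction---remains unproven.

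For comparison, the paper's proof avoids precisely this obstruction by cutting closed walks not at every crossing of the root edge but only at the \emph{first return to level} $1$: each walk factors as a closed walk of some length $j$ inside one of the two components, counted by the sum $\sum_{S}\left(C_{\ro(T_1)}(S;T_1)+C_{\ro(T_2)}(S;T_2)\right)$ over the two components, times a shorter closed walk starting with the root edge, which is handled by induction on the length together with Lemma~\ref{Lem:Adj_uv}. Because only the \emph{sum} over the two components ever appears---never the product of the two individual component counts---the sum part of the majorization in Lemma~\ref{Lem:closed_wa_ver} suffices, and the product-versus-majorization tension you ran into never arises. If you wish to keep the generating-function formulation, you would need to prove an auxiliary claim such as $\alpha\beta\ge\alpha^G\beta^G$ coefficientwise (i.e., the greedy tree minimizes the cross term), and that appears to require its own nontrivial induction, which you have not supplied.
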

\begin{proof}
Let $G_1$ and $G_2$ be the components of $G-\ro(G)$, and let $T_1$ and $T_2$ be the 
components of $T-\ro(T)$. Since for odd $k$ we trivially have 
$C_{\ro(T)}(k;T) = C_{\ro(G)}(k;G)=0$, we are only interested in even $k=2l.$
Let us reason by induction on $l$. The cases where $l=1,2$ are easy to check, since the closed walks of 
length at most $4$ starting with the root edge cannot reach beyond the first two levels, but these parts 
of $T$ and $G$ are isomorphic edge-rooted trees. Assume that the lemma holds whenever $l\leq m$ for 
some integer $m\geq 2$. Now consider the case where $l=m+1$. The level sequences of 
the elements in $\mathcal{C}_{\ro(T)}(k;T)$ and $\mathcal{C}_{\ro(G)}(k;G)$ are of the form $(1,1,i_1,i_2,\dots,i_{k-1})$, and $i_{k-1}$ also has to be $1$.

We first consider walks that do not return immediately to the starting point after the first step. For any $j$ with $2 \leq j\leq k-1$, let $\mathcal{C}^j_{\ro(T)}(k;T)$ and $\mathcal{C}^j_{\ro(G)}(k;G)$ be respectively the subsets of $\mathcal{C}_{\ro(T)}(k;T)$ and $\mathcal{C}_{\ro(G)}(k;G)$ whose elements are the walks with level sequences $(1,1,i_1,i_2,\dots,i_{k-1})$, where $i_j=1$ and 
$1\notin \{i_1,i_2,\dots,i_{j-1}\}$. Their cardinalities are denoted by $C^j_{\ro(T)}(k;T)$ and $C^j_{\ro(G)}(k;G)$ respectively. These walks start with the edge root, then go on to higher levels, return to level $1$ for the first time after $j$ steps, and then continue with $k-j-1$ more steps until they return to the starting point. We can uniquely split each of these walks into the $j$ steps from the first step to level $2$ to the first return to level $1$ and the rest. Set
$$\mathbb{S}_j=\{(1,i_1,i_2,\dots,i_{j-1},1): 1\notin \{i_1,i_2,\dots,i_{j-1}\}\}.$$
From Lemma~\ref{Lem:closed_wa_ver}, Lemma~\ref{Lem:Adj_uv} and the induction hypothesis, we now obtain
\begin{align*}
C^j_{\ro(T)}(k;T) &=C_{\ro(T_1),\ro(T_2)}(k-j;T) \sum_{S\in \mathbb{S}_j}C_{\ro(T_2)}(S;T_2)+C_{\ro(T_2),\ro(T_1)}(k-j;T)\sum_{S\in \mathbb{S}_j}C_{\ro(T_1)}(S;T_1)\\
&=\frac{1}{2}C_{\ro(T)}(k-j;T)\big(\sum_{S\in \mathbb{S}_j}C_{\ro(T_2)}(S;T_2)+\sum_{S\in \mathbb{S}_j}C_{\ro(T_1)}(S;T_1)\big)\\
&\leq \frac{1}{2}C_{\ro(G)}(k-j;G)\big(\sum_{S\in \mathbb{S}_j}C_{\ro(G_2)}(S;G_2)+\sum_{S\in \mathbb{S}_j}C_{\ro(G_1)}(S;G_1)\big)\\
&=C^j_{\ro(G)}(k;G)
\end{align*}
for any $j\geq 2$. This covers all the cases where $i_1\neq 1$. Next, consider the subsets 
$\mathcal{C}^*_{\ro(T)}(k;T)$ and $\mathcal{C}^*_{\ro(G)}(k;G)$ of 
$\mathcal{C}_{\ro(T)}(k;T)$ and $\mathcal{C}_{\ro(G)}(k;G)$, respectively; their elements are 
closed walks with level sequence $(1, 1, i_1, i_2,\dots,$ $i_{k-1})$, where $i_1=1$ and for any $h\in \{1,2,\dots,k-2\}$ we always have 
$(1,1)\neq (i_h,i_{h+1})$. In words, these walks move forwards and backwards along the edge root for the first two steps, then never use the edge root again, thus they stay in one of the two branches. From Lemma~\ref{Lem:closed_wa_ver}, we now get
\begin{align}
C^*_{\ro(T)}(k;T)
&=C_{\ro(T_1)}(k-2;T_1)+C_{\ro(T_2)}(k-2;T_2)\nonumber\\
&\leq C_{\ro(G_1)}(k-2;G_1)+C_{\ro(G_2)}(k-2;G_2)\nonumber\\
\label{Eq:C_P}
&=C^*_{\ro(G)}(k;G).
\end{align}
We are left with walks that use the edge root, return immediately, and use the edge root again at some stage. The set of these walks is divided further, depending on the first time that the edge root is used again. For any $j\geq 1$, we consider the subsets $\mathcal{C}'^j_{\ro(T)}(k;T)$ and $\mathcal{C}'^j_{\ro(G)}(k;G)$
of $\mathcal{C}_{\ro(T)}(k;T)$ and $\mathcal{C}_{\ro(G)}(k;G)$ whose elements are the closed
walks with level sequence $(1,1,$ $i_1, i_2,\dots,i_{k-1})$, where $i_1=i_j=i_{j+1}=1$ and $(1,1)\neq (i_h,i_{h+1})$
for any $h\in \{1,\dots,j-1\}$. Such a walk can be split uniquely into a walk of length $j+1$ in 
$\mathcal{C}^*_{\ro(T)}(j+1;T)$ ($\mathcal{C}^*_{\ro(G)}(j+1;G)$, respectively) and a closed walk of length $k-j-1$ starting with the edge root. From \eqref{Eq:C_P} and Lemma~\ref{Lem:Adj_uv}, we obtain
\begin{align*}
C'^j_{\ro(T)}(k;T)
&= C^*_{\ro(T)}(j+1;T) \cdot \frac{1}{2}C_{\ro(T)}(k-j-1;T)\\
&\leq C^*_{\ro(G)}(j+1;G) \cdot \frac{1}{2}C_{\ro(G)}(k-j-1;G)\\
&=C'^j_{\ro(G)}(k;G)
\end{align*}
for any $j\geq 1$. We see that the greedy tree $G$ has more or at least equally many walks of each type as $T$, which completes the proof.
\end{proof}

\begin{lem}
\label{Lem:Closed_Pass}
Let $D$ be a leveled degree sequence of an edge-rooted tree and $G = G(D)$ the associated edge-rooted greedy tree. For any element 
$T\in \mathbb{T}_D$ and for any nonnegative integer $k$ we have
$$
C^{\ro(T)}(k;T)\leq C^{\ro(G)}(k;G).
$$
\end{lem}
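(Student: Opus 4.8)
The claim $C^{\ro(T)}(k;T) \le C^{\ro(G)}(k;G)$ concerns closed walks of length $k$ that \emph{pass through} the root edge $\ro(T)$ (at some point, in either direction), rather than starting with it. My plan is to count such walks by classifying them according to the \emph{first} time the root edge is traversed, and then to reduce everything to Lemma~\ref{Lem:Start_Root}, which already controls walks that \emph{start} with the root edge. The starting point is the observation that any closed walk passing through $\ro(T)$ can be rotated so that it begins precisely when it first crosses the root edge; cyclic rotation of a closed walk preserves the closed-walk property and bijectively matches closed walks starting at different vertices.

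\medskip

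\noindent\textbf{The main decomposition.}
First I would fix a closed walk $W$ of length $k$ that uses $\ro(T)$. Reading $W$ cyclically, let the first traversal of the root edge occur at some step, and let $j$ be the length of the initial segment of $W$ before that first use of $\ro(T)$; this initial segment stays entirely within one of the two branches $T_1$ or $T_2$ (say the one containing the starting vertex), and is a closed walk avoiding $\ro(T)$ in that branch. By cyclically rotating $W$ to begin at the moment it first enters $\ro(T)$, I obtain a closed walk of length $k$ that \emph{starts} with the root edge, i.e., an element counted by $C_{\ro(T)}(k;T)$ as handled in Lemma~\ref{Lem:Start_Root}. The number of rotations that map to the same rotated walk equals the number of times $\ro(T)$ is used, so I would set up the count as
\begin{equation*}
C^{\ro(T)}(k;T) = \sum_{j=0}^{k} \big( C_{\ro(T_1)}(j;T_1) + C_{\ro(T_2)}(j;T_2) \big) \cdot (\text{contribution from the rest starting with } \ro(T)),
\end{equation*}
where the factor $C_{\ro(T_i)}(j;T_i)$ counts the closed excursion of length $j$ into branch $T_i$ before the first crossing. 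Each excursion-plus-remainder splits uniquely, so the whole quantity factors as a convolution over $j$ of a ``branch excursion'' term and a ``starts-with-root-edge'' term of complementary length.

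\medskip

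\noindent\textbf{Applying the earlier lemmas term by term.}
Once the decomposition is in place, the inequality is proved factor by factor. For the excursion factors $C_{\ro(T_i)}(j;T_i)$, the branches $T_1$ and $T_2$ are vertex-rooted trees whose leveled degree sequences are dominated (in the greedy sense) by those of $G_1$ and $G_2$; by Lemma~\ref{Lem:closed_wa_ver} applied at the roots of the branches, I get
\begin{equation*}
C_{\ro(T_1)}(j;T_1) + C_{\ro(T_2)}(j;T_2) \le C_{\ro(G_1)}(j;G_1) + C_{\ro(G_2)}(j;G_2)
\end{equation*}
for every $j$. For the complementary factor, counting closed walks that start with the root edge, I invoke Lemma~\ref{Lem:Start_Root} directly. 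Since all quantities are nonnegative and the total is a sum of products of two such dominated factors, term-by-term domination of a convolution yields $C^{\ro(T)}(k;T) \le C^{\ro(G)}(k;G)$.

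\medskip

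\noindent\textbf{The main obstacle.}
The hard part will be making the decomposition genuinely \emph{bijective} rather than merely an inequality of sums, in particular handling the bookkeeping of the cyclic rotation so that each closed walk through $\ro(T)$ is counted exactly once and matched to a walk that starts with $\ro(T)$. The subtlety is that a single closed walk may cross $\ro(T)$ several times, and I must fix a canonical crossing (the first one encountered from the chosen starting vertex) to avoid overcounting; this parallels the splitting-by-first-return device already used in the proof of Lemma~\ref{Lem:Start_Root}, where walks were classified by the first time the root level was revisited. I expect the cleanest route is to mimic that proof's structure exactly: partition $\mathcal{C}^{\ro(T)}(k;T)$ according to the index of the first step that uses $\ro(T)$, apply the uniqueness of the resulting split, and then bound each block using Lemma~\ref{Lem:Start_Root} together with Lemma~\ref{Lem:closed_wa_ver}, with the symmetry relation $C_{u,v}=C_{v,u}$ from Lemma~\ref{Lem:Adj_uv} ensuring the two directions of the root edge are treated symmetrically.
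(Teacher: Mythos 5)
Your high-level plan (reduce walks \emph{through} the root edge to walks \emph{starting with} it, then apply Lemmas~\ref{Lem:Start_Root} and~\ref{Lem:closed_wa_ver}) is the right one, but the decomposition you base it on fails, and the convolution formula it produces is false. The segment of a closed walk $W=w_1\dots w_{k+1}$ preceding the first traversal of the root edge is a walk from $w_1$ to an end of the root edge; it is a \emph{closed} walk only in the special case where $w_1$ is itself that end. So your factor $C_{\ro(T_1)}(j;T_1)+C_{\ro(T_2)}(j;T_2)$ (walks \emph{starting at} the branch roots) undercounts. Cyclic rotation does not repair this: the fibres of ``rotate to the first crossing'' are the gaps between consecutive uses of the root edge, which are in general unequal and not equal to the number of uses, so no uniform multiplicative relation holds. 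Concretely, take the path $a$--$u$--$v$--$b$ rooted at the edge $e=uv$ and $k=4$: then $C^{e}(4;T)=14-4=10$, while your formula gives $\bigl(C_u(0;T_1)+C_v(0;T_2)\bigr)C_{u,v}(4;T)+\bigl(C_u(2;T_1)+C_v(2;T_2)\bigr)C_{u,v}(2;T)=2\cdot 3+2\cdot 1=8$. The two lost walks are $auvua$ and $bvuvb$, exactly those whose pre-crossing segment (e.g.\ $au$) is not closed; for the class of $auvua$ the rotation map has fibres of sizes $3$ and $1$, not $2$ and $2$ as your fibre claim would require.

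The missing idea is to keep the original starting vertex and excise, not the initial segment, but the \emph{maximal closed sub-walk $W_2$ that starts with the root edge}: writing $W=W_1W_2W_3$, the merge $W'=W_1W_3$ is a closed walk lying entirely in one branch and \emph{passing through} (not necessarily starting at) that branch's root, and $W$ is uniquely recovered by inserting $W_2$ at the \emph{last} visit of the branch root in $W'$ (maximality of $W_2$ forces $W_3$ never to return to that vertex, and $W_3$ cannot reach the other end of the root edge at all since it avoids that edge). This decouples the two factors and yields the correct identity
\begin{equation*}
C^{\ro(T)}(k;T)=\sum_{k_1+k_2=k}\bigl(C^{\ro(T_1)}(k_1;T_1)+C^{\ro(T_2)}(k_1;T_2)\bigr)\cdot\tfrac12\,C_{\ro(T)}(k_2;T),
\end{equation*}
in which the first factor counts closed walks through the branch roots (i.e.\ those whose level sequences contain a $1$), so it is dominated by the corresponding quantity for $G$ via Lemma~\ref{Lem:closed_wa_ver}, while the second factor is handled by Lemma~\ref{Lem:Start_Root} together with the symmetry of Lemma~\ref{Lem:Adj_uv}. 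In your version both the first factor (``starting at'' instead of ``through'') and the splitting rule (first crossing, plus rotation) are wrong, and your fallback plan of ``partitioning by the index of the first step that uses $\ro(T)$'' runs into the same obstruction: both pieces of that split depend on the starting vertex $w_1$, so the count does not factor.
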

\begin{proof}
Any element, say $W$, in $\mathcal{C}^{\ro(T)}(k;T)$ or $\mathcal{C}^{\ro(G)}(k;G)$ has a 
unique decomposition as
$
W=W_1W_2W_3
$
for some $W_1,W_2$ and $W_3$ satisfying the following conditions:
\begin{itemize}
 \item[$i)$] $W_2$ is a closed walk starting from the edge root, chosen to have maximal length.

 \item[$ii)$] $W_1$ and $W_3$ do not use the edge root but can possibly have length zero. By merging the end of $W_1$ with the beginning of $W_3$, we obtain a closed walk $W'$.
\end{itemize}
Under the conditions $i)$ and $ii)$, $W_3$ visits an end of the edge root only once (at its starting point), 
otherwise we could extend $W_2$. This means that $W$ can be uniquely recovered from $W'$ and $W_2$ by inserting $W_2$ into $W'$ at the last appearance of an end vertex of the edge root. So the number of possible walks $W$ is the number of possible walks $W'$ times the number of possible walks $W_2$.

Let $T_1$ and $T_2$ be the components 
of $T - \ro(T)$, and $G_1$ and $G_2$ those of $G-\ro(G)$. By Lemma~\ref{Lem:closed_wa_ver}, we know that 
\begin{align*}
C^{\ro(T_1)}(l;T_1)+C^{\ro(T_2)}(l;T_2)\leq C^{\ro(G_1)}(l;G_1)+C^{\ro(G_2)}(l;G_2)
\end{align*} 
for any nonnegative integer $l$. Hence, using Lemma~\ref{Lem:Start_Root} we have 
\begin{align*}
C^{\ro(T)}(k;T)
&= \sum_{k_1+k_2=k}\big(C^{\ro(T_1)}(k_1;T_1)C_{\ro(T_1),\ro(T_2)}(k_2;T)+C^{\ro(T_2)}(k_1;T_2)C_{\ro(T_2)\ro(T_1)}(k_2;T)\big)\\
&= \sum_{k_1+k_2=k}\big(C^{\ro(T_1)}(k_1;T_1)+C^{\ro(T_2)}(k_1;T_2)\big) \cdot \frac{1}{2}C_{\ro(T)}(k_2;T)\\
&\leq \sum_{k_1+k_2=k}\big(C^{\ro(G_1)}(k_1;G_1)+C^{\ro(G_2)}(k_1;G_2)\big) \cdot \frac{1}{2}C_{\ro(G)}(k_2;G)\\
&=C^{\ro(G)}(k;G).
\end{align*}
\end{proof}
The next theorem combines Theorem~\ref{Thm:Main_ver_root} and Lemma~\ref{Lem:Closed_Pass}.
\begin{thm}
\label{Th:Main_closed_wa_edge}
Let $D$ be a leveled degree sequence of an edge-rooted tree. For any nonnegative integer $k$ and all $T\in \mathbb{T}_D$, we have
$$
\M_k(T)=C(k;T)\leq C(k;G(D))=\M_k(G(D)).
$$
For sufficiently large even $k$, the inequality is strict unless $T$ and $G(D)$ are isomorphic.
\end{thm}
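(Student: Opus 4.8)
The plan is to classify the closed walks of length $k$ in $T$ by their interaction with the root edge $\ro(T)$ and to treat the two resulting classes with the two tools already at hand. Writing $T_1$ and $T_2$ for the two components of $T-\ro(T)$ and $G_1,G_2$ for those of $G-\ro(G)$ (where $G=G(D)$), I observe that every closed walk that never traverses the root edge is trapped inside a single component, so that
$$
C(k;T)=C^{\ro(T)}(k;T)+C(k;T_1)+C(k;T_2)
$$
and, in the same way,
$$
C(k;G)=C^{\ro(G)}(k;G)+C(k;G_1)+C(k;G_2),
$$
where $C^{\ro(T)}(k;T)$ counts exactly those closed walks that do use the root edge.

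For the walks through the root edge, Lemma~\ref{Lem:Closed_Pass} gives directly $C^{\ro(T)}(k;T)\le C^{\ro(G)}(k;G)$. For the walks that avoid it, I would note that deleting the root edge lowers the degrees of its two endpoints by one while leaving all other degrees unchanged; hence $T-\ro(T)$ is a vertex-rooted two-component forest with leveled degree sequence $D'$, obtained from $D=((i_{1,1},i_{1,2}),(i_{2,1},\dots),\dots)$ by replacing its first block with $(i_{1,1}-1,i_{1,2}-1)$. By the very definition of the edge-rooted greedy tree, $G-\ro(G)$ is precisely the level greedy forest $G(D')$. Theorem~\ref{Thm:Main_ver_root}, applied to the forest sequence $D'$, therefore yields
$$
C(k;T_1)+C(k;T_2)=C(k;T-\ro(T))\le C(k;G(D'))=C(k;G_1)+C(k;G_2).
$$
Adding this to the bound on the root-edge walks gives $C(k;T)\le C(k;G)$, which is exactly $\M_k(T)\le\M_k(G(D))$.

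The strictness I would establish by contraposition together with Lemma~\ref{Lem:Strict}. Suppose $T$ and $G$ are not isomorphic as edge-rooted trees. Then $T-\ro(T)$ and $G-\ro(G)$ cannot be isomorphic as rooted forests: any rooted-forest isomorphism between them carries the two root vertices to the two root vertices, and hence, after reattaching the root edge joining those roots, extends to an edge-rooted isomorphism of $T$ and $G$. Since $T-\ro(T)\in\mathcal{T}_{D'}$ is thus not isomorphic to $G(D')$, Lemma~\ref{Lem:Strict} supplies an integer $k_0$ with $C(k;T-\ro(T))<C(k;G-\ro(G))$ for all even $k\ge k_0$. Combined with the non-strict inequality $C^{\ro(T)}(k;T)\le C^{\ro(G)}(k;G)$ from Lemma~\ref{Lem:Closed_Pass}, this forces $C(k;T)<C(k;G)$ for all even $k\ge k_0$.

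The two decomposition identities and the identification of $D'$ are immediate from the definitions, and the inequality itself then falls out by simple addition. The only step that genuinely needs care is the reduction in the strictness argument: one must verify that non-isomorphism of the edge-rooted trees really does transfer to non-isomorphism of the rooted forests obtained by deleting the root edge, so that Lemma~\ref{Lem:Strict} can be invoked. I expect this transfer---and the accompanying bookkeeping that a rooted-forest isomorphism is forced to respect the root edge---to be the main (if modest) obstacle; everything else is a direct assembly of the preceding results.
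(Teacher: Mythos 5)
Your proof is correct and follows essentially the same route as the paper's (very terse) proof: split closed walks into those avoiding the root edge, handled by Theorem~\ref{Thm:Main_ver_root} applied to the vertex-rooted forest $T-\ro(T)\in\mathcal{T}_{D'}$ with $G(D)-\ro(G(D))=G(D')$, and those traversing it, handled by Lemma~\ref{Lem:Closed_Pass}, with strictness coming from Lemma~\ref{Lem:Strict}. You merely make explicit the details the paper leaves implicit, in particular the transfer of non-isomorphism from edge-rooted trees to the rooted forests obtained by deleting the root edge, which you verify correctly.
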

\begin{proof}
Use Theorem~\ref{Thm:Main_ver_root} to compare the number of closed walks of length $k$ not using 
the edge root, and Lemma~\ref{Lem:Closed_Pass} for those which pass through the edge root. The fact that the inequality in Theorem~\ref{Thm:Main_ver_root} is strict for sufficiently large $k$ by Lemma~\ref{Lem:Strict} implies that this is also the case here.
\end{proof}

\subsection{Main result}
The main result of this section is the fact that if we fix a degree sequence $D$, then among 
all trees with degree sequence $D$, the greedy tree $G(D)$ has the maximum number of closed walks of any given length. $G(D)$ is not always the unique element of $\mathbb{T}_D$ which reaches the maximum number of fixed 
length closed walks: for instance, for any $T\in \mathbb{T}_D$, we have $\mathcal{C}(2;T)=2|E(T)|$, which only depends on $D$.
\begin{thm}
\label{Th:Main}
Let $D$ be a degree sequence of a tree. For any element 
$T\in \mathbb{T}_D$ and any $k \geq 0$, we have
$$\M_k(T)=C(k;T)\leq C(k;G(D))=\M_k(G(D)).$$
Moreover, the inequality is strict for sufficiently large even $k$ if $T$ and $G(D)$ are not isomorphic.
\end{thm}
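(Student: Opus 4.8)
The plan is to derive the unrooted statement from its rooted counterparts, Theorem~\ref{Thm:Main_ver_root} and Theorem~\ref{Th:Main_closed_wa_edge}, by equipping every tree with a canonical root---its \emph{centroid}, which is known to be either a single vertex or a single edge. First I would record the structural fact that the centroid of the greedy tree $G(D)$ coincides with its (vertex- or edge-) root, so that rooting $G(D)$ at its centroid recovers the degree-decreasing leveled degree sequence of Definition~\ref{Def:gf}. Then, for an arbitrary $T\in\mathbb{T}_D$, I would root $T$ at its centroid and read off the induced leveled degree sequence $L_T$; according to whether the centroid is a vertex or an edge, Theorem~\ref{Thm:Main_ver_root} or Theorem~\ref{Th:Main_closed_wa_edge} yields
\[
\M_k(T)=C(k;T)\le C(k;G(L_T))=\M_k(G(L_T)),
\]
where $G(L_T)$ is the level greedy tree with the same centroid-leveled degree sequence. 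Because $G(L_T)$ realizes the same degree multiset $D$, it again belongs to $\mathbb{T}_D$; combined with Lemma~\ref{Lem:Strict}, this already shows that any maximizer of $\M_k$ (for large even $k$) must be level greedy when rooted at its centroid, and in fact level greedy from every vertex.

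The crux is the second comparison, between $G(L_T)$ and the fully greedy tree $G(D)$. These have the same degree sequence but are in general \emph{not} isomorphic, since $L_T$ need not be degree-decreasing (a small-degree vertex can occupy the centroid while larger degrees sit below it). Such trees are rigid under the operations above---re-rooting and re-greedifying returns the same tree---so the rooted theorems cannot by themselves bridge the gap, and one must prove directly that moving high-degree vertices towards the centroid increases the closed-walk count. Accordingly, whenever $G(L_T)\ne G(D)$ there is a level with $\min V_j<\max V_{j+1}$, and I would apply a degree-preserving exchange that lifts the offending vertex one level towards the root; the aim is to show each such exchange leaves $\M_k$ unchanged or increases it, and strictly increases it for all large even $k$, so that a finite sequence of exchanges transforms $G(L_T)$ into $G(D)$ without ever decreasing $\M_k$.

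I expect the main obstacle to be quantifying the effect of one such exchange on $C(k;\cdot)$. Here I would split the closed walks according to how they traverse the two branches that are swapped and the path joining their attachment vertices, cancel the contributions common to both configurations, and estimate the remainder. The within-branch walk counts are controlled by Lemmas~\ref{Lem:open_wa_ver} and~\ref{Lem:closed_wa_ver}, while the level-by-level bookkeeping is organized by the majorization Lemmas~\ref{Lem:mj1}--\ref{Lem:mj3}; strictness for large even $k$ comes, as in Lemma~\ref{Lem:Strict}, from walks that oscillate deep into the branch relocated nearer the root. Finally, the weak inequality for every $k\ge0$ (including small and odd $k$) follows by running the argument for each fixed $k$---the odd moments vanish since trees are bipartite, and the low-order cases such as $\M_2=2|E(T)|$ depend only on $D$, which accounts for the equality cases noted before the theorem.
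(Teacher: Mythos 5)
Your first step (root at the centroid, apply Theorem~\ref{Thm:Main_ver_root} or Theorem~\ref{Th:Main_closed_wa_edge}) is fine, but the second half of your plan contains the genuine gap: the degree-preserving exchange lemma --- that swapping a high-degree branch one level towards the root never decreases $C(k;\cdot)$ and strictly increases it for large even $k$ --- is precisely the hard content of the theorem, and you have not proved it, only described how you would try. The tools you invoke do not supply it: Lemmas~\ref{Lem:open_wa_ver} and~\ref{Lem:closed_wa_ver} compare a rooted forest with the level greedy forest having the \emph{same} leveled degree sequence, whereas your exchange changes the leveled degree sequence, so these lemmas say nothing about it. (The paper does prove branch-moving lemmas of this flavour, Lemmas~\ref{Lem:Snake} and~\ref{Lem:Snake_e}, but they concern degree-sequence-\emph{changing} moves for Section~\ref{Sec:Diff}, and each needs a long, delicate walk decomposition; such a statement cannot be waved through.) There is also a well-definedness problem: if the only vertex of minimal degree at level $j$ is an ancestor of the offending high-degree vertex at level $j+1$, the subtree swap you describe does not exist, so even the combinatorial move itself needs care.

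Moreover, your claim that ``the rooted theorems cannot by themselves bridge the gap'' is false, and this is exactly where you miss the paper's idea. The paper re-roots at \emph{arbitrary} vertices \emph{and edges}, not only at the centroid: as long as some vertex or edge root exhibits the current tree as not level greedy, it is replaced by the level greedy tree with that leveled degree sequence. Theorems~\ref{Thm:Main_ver_root} and~\ref{Th:Main_closed_wa_edge} make each replacement non-decreasing in every $\M_k$, Lemma~\ref{Lem:Strict} makes it strictly increasing for all large even $k$, so the process terminates; the terminal tree is level greedy with respect to every vertex and edge root, and such ``semi-regular'' trees are exactly the greedy trees by the result cited from \cite{sze11}. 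Your rigidity intuition fails once edge roots are allowed. Concretely, let $D=(3,2,2,2,1,1,1)$, let $G(D)$ be the spider with three legs of length $2$, and let $T_1$ be the spider with center $a$ and legs of lengths $3,2,1$. Then $T_1$ is level greedy when rooted at its centroid $a$, and indeed at \emph{every} vertex, so your centroid procedure stalls at $T_1\neq G(D)$. But root $T_1$ at the edge $ar$, where $r$ is the first vertex of the length-$3$ leg: the leveled degree sequence is $\bigl((3,2),(2,2,1),(1,1)\bigr)$, whose edge-rooted level greedy tree is $G(D)$, so the paper's step strictly improves $T_1$ and finishes. In short, before inventing a new exchange argument you should have checked whether enlarging the set of roots already closes the gap --- it does, and that is the paper's proof.
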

\begin{proof}
If it is possible to choose an edge or a vertex as root such that $T$ is not level greedy, then we let 
$T_1$ be the level greedy tree with the same leveled degree sequence as $T$. We iterate this process: if an edge or vertex root can be chosen such that $T_l$ is not level greedy, replace it by the corresponding level greedy tree, which we denote by $T_{l+1}$. Then $\M_k(T_{l+1}) \geq \M_k(T_l)$ for all $k \geq 0$, and for sufficiently large even $k$, the inequality is strict. Therefore, no infinite loops are possible in this process.

Hence there exists an integer $m$ 
such that $T_m$ is level greedy with respect to any choice of vertex or edge root. This tree $T_m$ satisfies the ``semi-regular'' property defined in \cite{sze11}, and hence it is a greedy tree. From Theorems~\ref{Thm:Main_ver_root} and \ref{Th:Main_closed_wa_edge}, we obtain
$$
C(k;T)\leq C(k;T_1)\leq \dots\leq C(k;T_m) = C(k;G(D))
$$
for any $k\geq 0$, with strict inequality for sufficiently large even $k$. 
\end{proof}

\begin{rem} While the inequality in Theorem~\ref{Th:Main} is strict for sufficiently large $k$, there is no ``universal'' $k$ with this property: for every $k$, there exists some degree sequence $D$ and a tree $T$ with degree sequence $D$ that is not isomorphic to the greedy tree $G = G(D)$ such that
$$\M_{\ell}(T) = \M_{\ell}(G),\qquad \ell=0,1,\ldots,k.$$
Consider, for instance, the degree sequence $D=(3,3,2,2,\ldots,2,1,1,1,1)$, where the number of $2$s is $4r-2$ for some integer $r \geq 1$. The greedy tree $G = G(D)$ consists of two neighboring vertices of degree $3$ to which paths are attached: two paths of length $r$ to one of the two, two paths of length $r+1$ to the other. Now let $T$ be the tree where one of the paths of length $r$ in $G$ is interchanged with one of the paths of length $r+1$.

$T$ and $G$ have the same number of (closed) walks of any length that do not contain the vertices of degree $3$, since the forests resulting when the two are removed are isomorphic. Moreover, the subtrees of $T$ and $G$ consisting of vertices whose distance from the degree $3$ vertices is at most $r$ are isomorphic as well. Thus
$$\M_{\ell}(T) = \M_{\ell}(G),\qquad \ell \leq 2r.$$
\end{rem}

\subsection{Consequences of the main result}

Several corollaries follow immediately from our main theorem. In particular, in view of \eqref{Eq:ef}, we obtain the following corollary:
\begin{cor}
For any function $f(x) = \sum_{k=0}^{\infty} a_kx^k$ with nonnegative coefficients and for any tree $T$ with degree sequence $D$, we have
$$\E_f(T)\leq \E_f(G(D)),$$
where $E_f$ is defined as in~\eqref{Eq:efdefi}. If the even part of $f$ is not a polynomial (i.e., $a_k > 0$ for infinitely many even values of $k$), then the inequality is strict unless $T$ is isomorphic to $G(D)$. In particular,
$$\EE(T) < \EE(G(D))$$
for all $T\in \mathbb{T}_D$ that are not isomorphic to $G(D)$.
\end{cor}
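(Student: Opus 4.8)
The plan is to apply Theorem~\ref{Th:Main} moment by moment, exploiting the power-series representation \eqref{Eq:ef} together with the bipartiteness of trees. First I would record the decisive structural fact about trees: since every tree is bipartite, its spectrum is symmetric about $0$, so all odd spectral moments vanish, i.e. $\M_k(T) = 0$ whenever $k$ is odd. This is already noted in the excerpt (closed walks of odd length do not exist in a forest), and it means that only the even-indexed coefficients $a_k$ contribute to $\E_f$. Consequently the sign assumption we need is only on the $a_k$ for even $k$, which is exactly the hypothesis of the corollary.

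Next I would write, using \eqref{Eq:ef}, the difference
\begin{equation*}
\E_f(G(D)) - \E_f(T) = \sum_{k=0}^{\infty} a_k\bigl(\M_k(G(D)) - \M_k(T)\bigr).
\end{equation*}
By Theorem~\ref{Th:Main}, every bracketed term with $k$ even is nonnegative, and by the odd-moment vanishing the terms with $k$ odd are all identically zero. Since each $a_k \geq 0$, every summand is nonnegative, and the non-strict inequality $\E_f(T) \leq \E_f(G(D))$ follows at once. The only mild technical point is to justify interchanging the sum over $k$ with the (finite) spectral sum; this is legitimate because $f$ is assumed in \eqref{Eq:f} to have a power series converging everywhere, and there are only finitely many eigenvalues, so the double series converges absolutely.

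For the strict inequality I would argue as follows. Assume $T$ is not isomorphic to $G(D)$. Theorem~\ref{Th:Main} guarantees an integer $k_0$ such that $\M_k(G(D)) > \M_k(T)$ for all \emph{even} $k \geq k_0$. If the even part of $f$ is not a polynomial, then $a_k > 0$ for infinitely many even $k$, so we may select one such even index $k^* \geq k_0$ with $a_{k^*} > 0$; the corresponding summand $a_{k^*}\bigl(\M_{k^*}(G(D)) - \M_{k^*}(T)\bigr)$ is then strictly positive, while all other summands remain nonnegative, forcing $\E_f(G(D)) - \E_f(T) > 0$. Applying this to $f(x) = e^x$, where $a_k = 1/k! > 0$ for every $k$ (in particular for infinitely many even $k$), yields $\EE(T) < \EE(G(D))$.

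\begin{proof}
Since $T$ is a tree, it is bipartite, so its spectrum is symmetric about the origin and all odd spectral moments vanish: $\M_k(T) = 0$ for every odd $k$, and likewise $\M_k(G(D)) = 0$. Using \eqref{Eq:ef} and the fact that $f$ has an everywhere-convergent power series while the spectral sums are finite, we may write
\begin{equation*}
\E_f(G(D)) - \E_f(T) = \sum_{k=0}^{\infty} a_k\bigl(\M_k(G(D)) - \M_k(T)\bigr) = \sum_{\substack{k \geq 0 \\ k \text{ even}}} a_k\bigl(\M_k(G(D)) - \M_k(T)\bigr),
\end{equation*}
where the odd terms drop out. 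By Theorem~\ref{Th:Main}, each bracket with $k$ even is nonnegative, and by hypothesis each $a_k \geq 0$; hence the whole sum is nonnegative, giving $\E_f(T) \leq \E_f(G(D))$.

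Now suppose that $T$ is not isomorphic to $G(D)$ and that the even part of $f$ is not a polynomial. By Theorem~\ref{Th:Main} there is an integer $k_0$ such that $\M_k(G(D)) > \M_k(T)$ for all even $k \geq k_0$, and by assumption there are infinitely many even indices $k$ with $a_k > 0$. Choose one such even $k^* \geq k_0$. The summand indexed by $k^*$ satisfies $a_{k^*}\bigl(\M_{k^*}(G(D)) - \M_{k^*}(T)\bigr) > 0$, while every other summand is nonnegative, so $\E_f(G(D)) - \E_f(T) > 0$. Applying this to $f(x) = e^x$, for which $a_k = 1/k! > 0$ for all $k$ and in particular for infinitely many even $k$, yields $\EE(T) < \EE(G(D))$ whenever $T \in \mathbb{T}_D$ is not isomorphic to $G(D)$.
\end{proof}
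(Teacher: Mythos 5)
Your proof is correct and follows exactly the route the paper intends: the paper derives this corollary directly from Theorem~\ref{Th:Main} and the expansion \eqref{Eq:ef}, with strictness coming from the eventual strict inequality of even moments combined with infinitely many positive even coefficients $a_k$. Your write-up simply makes explicit the details (vanishing odd moments, absolute convergence, choice of $k^* \geq k_0$) that the paper leaves to the reader.
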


Moreover, we also obtain one of the main results of \cite{biyikoglu2008graphs} as another corollary, since the spectral radius $\rho(T)$ of a tree $T$ is equal to the limit $\lim_{\ell \to \infty} \sqrt[2\ell]{\M_{2\ell}(T)}$.

\begin{cor}\label{cor:specrad}
Among all trees with degree sequence $D$, the greedy tree $G(D)$ has the largest spectral radius $\rho(G(D))$.
\end{cor}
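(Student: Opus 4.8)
The plan is to deduce the corollary directly from Theorem~\ref{Th:Main} together with the limit representation $\rho(T) = \lim_{\ell \to \infty} \sqrt[2\ell]{\M_{2\ell}(T)}$ of the spectral radius that was recalled just above the statement. First I would fix an arbitrary tree $T \in \mathbb{T}_D$ and apply Theorem~\ref{Th:Main} with the even exponents $k = 2\ell$, which yields
$$\M_{2\ell}(T) \le \M_{2\ell}(G(D)) \qquad \text{for all } \ell \ge 0.$$
Since both sides are nonnegative and $x \mapsto x^{1/(2\ell)}$ is increasing on $[0,\infty)$, taking $(2\ell)$-th roots preserves the inequality,
$$\sqrt[2\ell]{\M_{2\ell}(T)} \le \sqrt[2\ell]{\M_{2\ell}(G(D))},$$
and letting $\ell \to \infty$ while applying the limit formula to each side gives $\rho(T) \le \rho(G(D))$. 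As $T$ was arbitrary, this is exactly the assertion that $G(D)$ has the largest spectral radius in $\mathbb{T}_D$.

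For completeness I would include a short justification of the limit formula, which carries the only genuine content. Writing $\lambda_1,\dots,\lambda_n$ for the eigenvalues of $T$, we have $\M_{2\ell}(T) = \sum_i \lambda_i^{2\ell}$; if $m \ge 1$ is the number of eigenvalues of modulus $\rho(T)$, each of these contributes $\rho(T)^{2\ell}$ and all remaining terms are of strictly smaller order, so that $\M_{2\ell}(T) = m\,\rho(T)^{2\ell}\bigl(1 + o(1)\bigr)$ as $\ell \to \infty$. Taking $(2\ell)$-th roots and using $m^{1/(2\ell)} \to 1$ then yields $\sqrt[2\ell]{\M_{2\ell}(T)} \to \rho(T)$.

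I do not expect any serious obstacle: the corollary is a soft consequence of the moment comparison, and the passage to the limit needs only monotonicity of the root together with the elementary asymptotics above. The single point worth flagging is that strictness is genuinely lost in the limit. Even though Theorem~\ref{Th:Main} provides strict moment inequalities for all sufficiently large even $k$ whenever $T \not\cong G(D)$, the $(2\ell)$-th roots may still converge to the same value, since the leading eigenvalues could differ only in multiplicity. Accordingly, the corollary correctly claims only that $G(D)$ attains the maximum spectral radius, and does not assert that it is the unique maximizer.
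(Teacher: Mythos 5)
Your proof is correct and follows essentially the same route as the paper, which likewise deduces the corollary from Theorem~\ref{Th:Main} via the limit formula $\rho(T)=\lim_{\ell\to\infty}\sqrt[2\ell]{\M_{2\ell}(T)}$ (the paper states this in one line, while you additionally justify the limit formula). Your remark that strictness is lost in the limit is also consistent with the paper, which attributes uniqueness of the maximizer to \cite{biyikoglu2008graphs} rather than claiming it here.
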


In \cite{biyikoglu2008graphs}, it was also shown that the greedy tree is unique with this property.

The Estrada index is just one of in principle infinitely many graph invariants of the form $E_f$. One could certainly conceive of a ``Hyper-Estrada index'', for example:
$$\operatorname{EEE}(G) = \sum_{i=1}^n e^{e^{\lambda_i}}.$$
A somewhat more natural example is the following: note that the characteristic polynomial of a graph $G$ is given by
$$P_G(x) = \prod_{i=1}^n (x-\lambda_i) = x^n \prod_{i=1}^n \left( 1 - \frac{\lambda_i}{x} \right).$$
If $x$ is greater than the spectral radius, then we can take the logarithm and expand it into a power series:
\begin{align*}
\log P_G(x) &= n \log x + \sum_{i=1}^n \log \left( 1 - \frac{\lambda_i}{x} \right) = n \log x - \sum_{i=1}^n \sum_{k=1}^{\infty} \frac{1}{k} \cdot \frac{\lambda_i^k}{x^k} \\
&= n \log x - \sum_{k=1}^{\infty} \frac{M_k(G)}{k x^k}.
\end{align*}
This formula, together with our main result, implies the following statement:

\begin{cor}\label{cor:charpoly}
For any tree $T$ with degree sequence $D$ and any $x > \rho(G(D))$, the inequality
$$P_T(x) \geq P_{G(D)}(x)$$
holds, with equality only if $T$ is isomorphic to $G(D)$.
\end{cor}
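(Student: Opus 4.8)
The plan is to exploit the power series expansion of $\log P_G(x)$ derived immediately above the statement, which expresses the characteristic polynomial directly in terms of the spectral moments $\M_k(G)$. Since Theorem~\ref{Th:Main} already furnishes $\M_k(T) \le \M_k(G(D))$ for every $k \ge 0$, and the moments enter this expansion with a uniform negative sign, a term-by-term comparison of the two logarithms should immediately yield the desired inequality.

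More precisely, I would first note that for $x > \rho(G(D))$ every factor $x - \lambda_i$ is positive for both $T$ and $G(D)$, so that $P_T(x)$ and $P_{G(D)}(x)$ are positive and their logarithms are well defined. By Corollary~\ref{cor:specrad} we have $\rho(T) \le \rho(G(D))$, so the choice $x > \rho(G(D))$ guarantees that the logarithmic series converges for both trees. Subtracting the two expansions then gives
\begin{equation*}
\log P_T(x) - \log P_{G(D)}(x) = \sum_{k=1}^{\infty} \frac{\M_k(G(D)) - \M_k(T)}{k\,x^k},
\end{equation*}
where the leading term $n\log x$ cancels because $T$ and $G(D)$ share the same number of vertices. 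Each summand is nonnegative by Theorem~\ref{Th:Main} together with $x > 0$, so the right-hand side is $\ge 0$; applying the exponential function and using the monotonicity of $\log$ then yields $P_T(x) \ge P_{G(D)}(x)$.

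For the equality case I would appeal to the strict part of Theorem~\ref{Th:Main}: if $T$ is not isomorphic to $G(D)$, then $\M_k(T) < \M_k(G(D))$ for all sufficiently large even $k$. Consequently infinitely many summands above are strictly positive, forcing the sum to be strictly positive and hence $P_T(x) > P_{G(D)}(x)$. The only genuine subtlety I anticipate is justifying the term-by-term subtraction of the two infinite series and the cancellation of the $n\log x$ terms; this is harmless since both series converge absolutely for $x > \rho(G(D))$, so they may be regrouped and subtracted freely.
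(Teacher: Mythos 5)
Your proposal is correct and follows exactly the route the paper intends: the logarithmic expansion $\log P_G(x) = n\log x - \sum_{k\geq 1} \M_k(G)/(k x^k)$, term-by-term comparison via Theorem~\ref{Th:Main}, convergence secured by $\rho(T)\leq\rho(G(D))$ from Corollary~\ref{cor:specrad}, and strictness from the strict inequality of the even moments for large $k$ when $T\not\cong G(D)$. No gaps; this matches the paper's (implicit) proof.
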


\section{Trees with different degree sequences}
\label{Sec:Diff}

In this section, we compare greedy trees with different degree sequence, in a similar way as it was done in \cite{biyikoglu2008graphs,Andriantiana2012,andriantiana2013greedy,zhang12}. This allows us to determine the maximal spectral moments of trees with different restrictions, e.g. given maximum degree or number of leaves.

To this end, we use a transformation on level greedy trees, where branches are moved between vertices at the same level. We study the 
effect of such a transformation on the number of closed walks of given length. Unlike the procedure in the proof of Theorem~\ref{Th:Main}, the transformation that we consider in the following lemma does not preserve the degree sequence.

For any vertex $v$ in a rooted tree $T$, we denote by $T_v$ the rooted tree spanned by 
$v$ and all its descendants, where $v$ is chosen to be the root.
\begin{lem}
\label{Lem:Snake}
Let $D=((i_{1,1}),(i_{2,1},\dots,i_{2,k_2}),\dots,(i_{n,1},\dots,i_{n,k_n}))$ be a leveled 
degree sequence of a (vertex) rooted tree. For some $i$ and $j$ with $1< i < L(D)$ and $1<j\leq k_i$, let $B$ be a branch of $g^i_j$ in the level greedy tree $G = G(D)$ which does not contain the root. Choose the neighbor of $g_j^i$ in $B$ to be the 
root of $B$. Let $T=G-g^i_j\ro(B)+g^i_{j'}\ro(B)$ for some $j'<j$  (see Figure~\ref{Fig:moving}). Then we have 
$$C(k;T)\geq C(k;G)$$ for any nonnegative integer $k$. For even $k \geq 4$, the inequality is strict.
\end{lem}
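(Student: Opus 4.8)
The plan is to reduce the whole comparison between $G$ and $T$ to a single inequality about closed walks in the common \emph{frame} $G_0 := G - B = T - B$, the rooted tree obtained by detaching $B$ (which is literally the same whether one starts from $G$ or from $T$, since in both cases $w := g^i_j$ ends up with degree $i_{i,j}-1$, $u := g^i_{j'}$ with degree $i_{i,j'}$, and everything else is unchanged). Closed walks avoiding $B$ live entirely in $G_0$ and are equinumerous in $G$ and $T$, so only walks meeting $B$ can differ. To organize these I would pass to the characteristic polynomial $P_\Gamma$, which for a tree coincides with the matching polynomial and hence obeys the edge-deletion recurrence across the bridge $e = w\,\ro(B)$ (resp. $u\,\ro(B)$). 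Writing $r = \ro(B)$ and using $\Gamma - e = G_0 \sqcup B$, $\Gamma - w - r = (G_0-w)\sqcup(B-r)$, one gets $P_G = P_{G_0}P_B - P_{G_0-w}P_{B-r}$ and $P_T = P_{G_0}P_B - P_{G_0-u}P_{B-r}$. With $g_a(x) := P_{G_0-a}(x)/P_{G_0}(x)$ and $h(x) := P_{B-r}(x)/P_B(x)$ this factors cleanly as
\[
\frac{P_G(x)}{P_T(x)} = \frac{1 - g_w(x)h(x)}{1 - g_u(x)h(x)}.
\]

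The payoff of this factorization is that the paper's own identity $\log P_\Gamma(x) = n\log x - \sum_{k\ge1}\M_k(\Gamma)\,x^{-k}/k$ converts the claim into a statement about one logarithm. Taking logarithms and expanding as power series in $1/x$ (legitimate since $g_ah = O(x^{-2})$),
\[
\sum_{k\ge1}\frac{\M_k(T)-\M_k(G)}{k}\,x^{-k} = \log\frac{P_G(x)}{P_T(x)} = \sum_{j\ge1}\frac{h(x)^j\big(g_u(x)^j - g_w(x)^j\big)}{j}.
\]
By the standard resolvent identity $x\,P_{H-a}(x)/P_H(x) = \sum_{m\ge0}C_a(m;H)\,x^{-m}$, valid beyond the spectral radius, both $h$ and the $g_a$ are power series in $1/x$ with nonnegative coefficients, with $g_a = \sum_{m\ge0}C_a(m;G_0)x^{-m-1}$ encoding exactly the frame walk-counts. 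Since $g_u^j-g_w^j = (g_u-g_w)\sum_{t=0}^{j-1}g_u^{t}g_w^{j-1-t}$, the entire right-hand side has nonnegative coefficients as soon as $g_u-g_w$ does. Thus the lemma reduces to the single inequality
\[
(\star)\qquad C_u(m;G_0)\ \ge\ C_w(m;G_0)\qquad\text{for all }m\ge0,
\]
i.e. the more central level-$i$ vertex carries at least as many closed walks in the frame.

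To prove $(\star)$ I would compare $G_0$ with the full greedy tree $G$. Writing $C_a(m;G) = C_a(m;G_0) + \Delta_a(m)$, where $\Delta_a(m)$ counts the length-$m$ closed walks based at $a$ that enter $B$ (use the bridge), $(\star)$ becomes $C_u(m;G) - C_w(m;G) \ge \Delta_u(m) - \Delta_w(m)$. The left-hand side is nonnegative by \eqref{Eq:th22} of Lemma~\ref{Lem:closed_wa_ver} (summed over level sequences), which gives $C_{g^i_{j'}}(m;G) \ge C_{g^i_{j}}(m;G)$ for $j'<j$. It therefore suffices to show $\Delta_w(m)\ge\Delta_u(m)$: at least as many closed walks penetrate $B$ from the bridge endpoint $w$ as from the farther vertex $u$. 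I would establish this by a first-passage decomposition at the bridge—splitting each such walk into the part preceding its first arrival at $w$, the (identical) excursion into $B$, and the remainder—in precisely the style of the edge-rooted analysis of Lemmas~\ref{Lem:Start_Root} and~\ref{Lem:Closed_Pass}, invoking \eqref{Eq:th22} and Lemma~\ref{Lem:Adj_uv} at each stage. I expect $\Delta_w \ge \Delta_u$ to be the main obstacle: although $u$ supports more closed walks overall, one must show that those \emph{starting at the bridge endpoint} $w$ enter $B$ at least as often, and controlling this requires the same delicate balancing of ``upward'' and ``downward'' contributions that underlies Lemma~\ref{Lem:closed_wa_ver}.

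For strictness, observe that $(\star)$ is already strict at $m=2$: $C_a(2;G_0)=\deg_{G_0}(a)$ and $\deg_{G_0}(u) = i_{i,j'} \ge i_{i,j} > i_{i,j}-1 = \deg_{G_0}(w)$. Tracking leading terms, $h(x)=x^{-1}+\cdots$ while $g_u(x)-g_w(x) = (\deg_{G_0}(u)-\deg_{G_0}(w))x^{-3}+\cdots$, both with nonnegative coefficients, so the $j=1$ summand $h(g_u-g_w)$ already forces a strictly positive coefficient at $x^{-4}$ (the $j\ge2$ summands start at $x^{-6}$), giving $\M_4(T)-\M_4(G)=4(\deg_{G_0}(u)-\deg_{G_0}(w))>0$. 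Since $(\star)$ remains strict at every even $m\ge2$ (prepending back-and-forth steps to a length-$2$ walk witnessing the degree surplus produces, for each even length, strictly more closed walks at $u$ than can be matched at $w$), the bound $[x^{-k}]\,h(g_u-g_w)\ge C_u(k-2;G_0)-C_w(k-2;G_0)$ is strictly positive for every even $k\ge4$. Hence $\M_k(T)>\M_k(G)$ for all even $k\ge4$, while $\M_0$ and $\M_2$ agree as they depend only on $n$ and $|E|$, matching the stated threshold.
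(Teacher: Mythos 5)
Your reduction of the lemma to the inequality $(\star)$ is correct, and it is a genuinely different route from the paper's: the bridge-deletion identity $P_G=P_{G_0}P_B-P_{G_0-w}P_{B-r}$, the factorization $P_G/P_T=(1-g_wh)/(1-g_uh)$, the logarithmic expansion, and the resolvent identity $xP_{H-a}(x)/P_H(x)=\sum_{m\ge 0}C_a(m;H)x^{-m}$ are all valid, so the lemma (including strictness) would indeed follow from $C_u(m;G_0)\ge C_w(m;G_0)$ for all $m$. The paper never passes to the frame $G_0$; instead it uses level-greediness to build an injective, level-preserving homomorphism of the subtree on the $g^i_j$-side into the $g^i_{j'}$-side and injects the walk sets $\mathcal{C}^{e}(k;G)$ into $\mathcal{C}^{e'}(k;T)$ directly, splitting walks at their visits to the closest common ancestor.

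The gap is exactly at the step you flag as ``the main obstacle'': the claim $\Delta_w(m)\ge\Delta_u(m)$ is false, so no first-passage decomposition in the style of Lemmas~\ref{Lem:Start_Root} and~\ref{Lem:Closed_Pass} can establish it. Since $\Delta_a(m)=C_a(m;G)-C_a(m;G_0)$ and $\lambda_1(G_0),\lambda_1(B)<\lambda_1(G)$, for even $m\to\infty$ one has $\Delta_a(m)\sim C_a(m;G)\sim 2\lambda_1(G)^m v(a)^2$, where $v$ is the Perron vector of $G$; moreover \eqref{Eq:th22} forces $v(u)\ge v(w)$, generically strictly, whence $\Delta_u(m)/\Delta_w(m)\to v(u)^2/v(w)^2>1$ and the inequality reverses for all large even $m$. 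A concrete counterexample satisfying all hypotheses of the lemma is $D=((2),(3,2),(1,1,1))$: the root has children $u=g^2_1$ (with two leaf children) and $w=g^2_2$ (with one leaf child $b$), and $B=\{b\}$. Here $v(u)/v(w)=(\lambda_1^2-1)/(\lambda_1^2-2)\approx 1.618$, and the resolvent identities give, with $t=x^2$,
\begin{equation*}
\sum_{m\ge 0}\Delta_u(m)x^{-m}=\frac{t}{(t^2-5t+5)(t^2-4t+2)},\qquad
\sum_{m\ge 0}\Delta_w(m)x^{-m}=\frac{t(t-3)^2}{(t^2-5t+5)(t^2-4t+2)},
\end{equation*}
from which $\Delta_u(16)=5490>5421=\Delta_w(16)$. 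The structural reason the approach cannot be repaired is that you split the (true) nonnegative quantity $C_u(m;G_0)-C_w(m;G_0)$ as $[C_u(m;G)-C_w(m;G)]-[\Delta_u(m)-\Delta_w(m)]$ and bound the brackets separately; but both brackets grow like $2\lambda_1(G)^m\big(v(u)^2-v(w)^2\big)$, so the truth of $(\star)$ resides entirely in the cancellation of these leading terms, which any separate bounding discards. Thus $(\star)$ --- the real content of the lemma --- remains unproven; it needs a direct comparison inside $G_0$ (for instance a walk injection of the kind the paper constructs), not a reduction to inequalities in $G$.
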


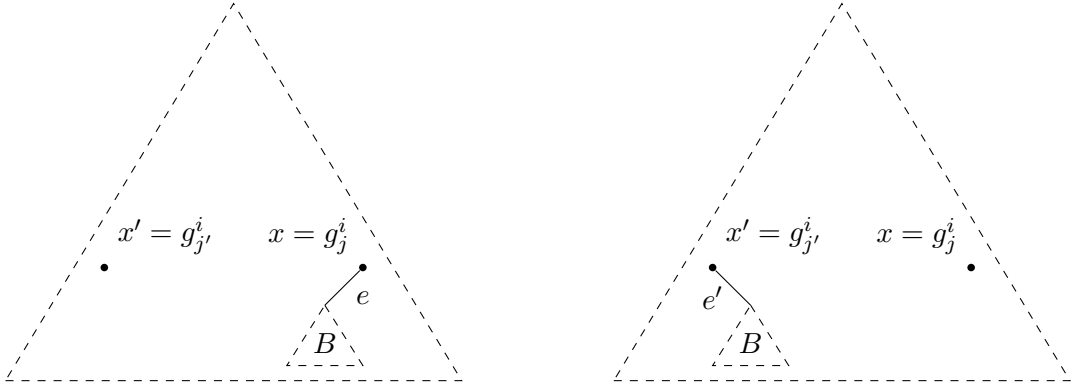
\begin{figure}[htbp]
\centering
    \begin{tikzpicture}[scale=1]
        \node[fill=black,label=above right:{$x' = g^i_{j'}$},circle,inner sep=1pt] (v11) at (1.3,1.5) {};
        \node[fill=black,label=above left:{$x = g^i_j$},circle,inner sep=1pt] (v21) at (4.7,1.5) {};
        \draw[dashed] (0,0)--(6,0)--(3,5)--(0,0);
        \draw (v21) -- (4.2,1);
        \draw[dashed] (4.2,1)--(4.7,0.2)--(3.7,0.2)--(4.2,1);
        \node at (4.2,0.5) {$B$};
        \node at (4.7,1.1) {$e$};

       \node[fill=black,label=above right:{$x' = g^i_{j'}$},circle,inner sep=1pt] (v12) at (9.3,1.5) {};
        \node[fill=black,label=above left:{$x = g^i_j$},circle,inner sep=1pt] (v22) at (12.7,1.5) {};
        \draw[dashed] (8,0)--(14,0)--(11,5)--(8,0);
        \draw (v12) -- (9.8,1);

        \draw[dashed] (9.8,1)--(10.3,0.2)--(9.3,0.2)--(9.8,1);
        \node at (9.8,0.5) {$B$};
        \node at (9.3,1.1) {$e'$};

    \end{tikzpicture}
\caption{Moving a branch: the level greedy tree $G$ (left) and the resulting tree $T$ (right).}
\label{Fig:moving}
\end{figure}

\begin{proof}
We use the same labels for vertices in $T$ as in $G$. For notational convenience, set 
$x= g^i_j$, $x' = g^i_{j'}$, $e=g^i_j\ro(B)$ and $e'=g^i_{j'}\ro(B).$

It is clear that $C(k;T)-C^{e'}(k;T) = C(k;G)-C^{e}(k;G)$
because $T-e'=G-e$. Thus it suffices to prove
\begin{equation}\label{eq:through_e}
C^{e'}(k;T) \geq C^{e}(k;G).
\end{equation}
Let $v=g^{i'}_l$ be the closest common
ancestor of $x = g^i_j$ and $x' = g^i_{j'}$ in $G$, and let
$u = g^{i'+1}_{h}$ and $u' = g^{i'+1}_{h'}$ be the neighbors of $v$ in the branches containing $g^i_{j}$ and $g^i_{j'}$ respectively. 
\begin{figure}[htbp]
\centering
    \begin{tikzpicture}[scale=1]
        \node[fill=black,label=right:{$v$},circle,inner sep=1pt] (u) at (0,1) {};

        \node[fill=black,label=right:{$u'$},circle,inner sep=1pt] (v1) at (-1,0) {};
        \draw[dashed] (v1)--(-1,-4)--(-0.25,-4)--(v1);
        \node[fill=black,circle,inner sep=1pt] (v11) at (-1.5,-1.5) {};
        \draw[dashed] (v11)--(-1.85,-4)--(-1.1,-4)--(v11);
        \node[fill=black,label=right:{$x'$},circle,inner sep=1pt] (v12) at (-2.75,-3) {};
        \node[fill=white,label=right:{$C'_{i-i'}$},circle,inner sep=0pt] () at (-3.35,-3.75) {};
        \node[fill=white,label=right:{$C'_2$},circle,inner sep=0pt] () at (-1.95,-3.75) {};
        \node[fill=white,label=right:{$C'_1$},circle,inner sep=0pt] () at (-1.15,-3.75) {};
        \draw[dashed] (v12)--(-3.5,-4)--(-2,-4)--(v12);
        \node[fill=black,circle,inner sep=0.5pt] () at (-2.55,-2.7) {};
        \node[fill=black,circle,inner sep=0.5pt] () at (-2.17,-2.22) {};
        \node[fill=black,circle,inner sep=0.5pt] () at (-1.8,-1.85) {};

        \node[fill=black,label=right:{$u$},circle,inner sep=1pt] (v2) at (1,0) {};
        \draw[dashed] (v2)--(1,-4)--(0.25,-4)--(v2);
        \node[fill=black,circle,inner sep=1pt] (v21) at (1.5,-1.5) {};
        \draw[dashed] (v21)--(1.85,-4)--(1.1,-4)--(v21);
        \node[fill=black,label=left:{$x$},circle,inner sep=1pt] (v22) at (2.75,-3) {};
        \node[fill=white,label=left:{$C_{i-i'}$},circle,inner sep=0pt] () at (3.35,-3.75) {};
        \node[fill=white,label=left:{$C_2$},circle,inner sep=0pt] () at (1.95,-3.75) {};
        \node[fill=white,label=left:{$C_1$},circle,inner sep=0pt] () at (1.15,-3.75) {};
        \node[fill=white,label=above:{\large $G(D)$},circle,inner sep=0pt] () at (5,-3.75) {};
        \draw[dashed] (v22)--(3.5,-4)--(2,-4)--(v22);
        \node[fill=black,circle,inner sep=0.5pt] () at (2.55,-2.7) {};
        \node[fill=black,circle,inner sep=0.5pt] () at (2.17,-2.22) {};
        \node[fill=black,circle,inner sep=0.5pt] () at (1.8,-1.85) {};

        \draw (v1)--(u)--(v2);
        \draw (v21)--(v2);
        \draw (v1)--(v11);
        \draw (u)--(0.5,1.5);
        \draw[dashed] (1,2.5)--(-5,-4.1)--(7,-4.1)--(1,2.5);
    \end{tikzpicture}
\caption{Decomposition of $G_v$ in the proof of Lemma~\ref{Lem:Snake}}
\label{Fig:isom_VertRoot}
\end{figure}
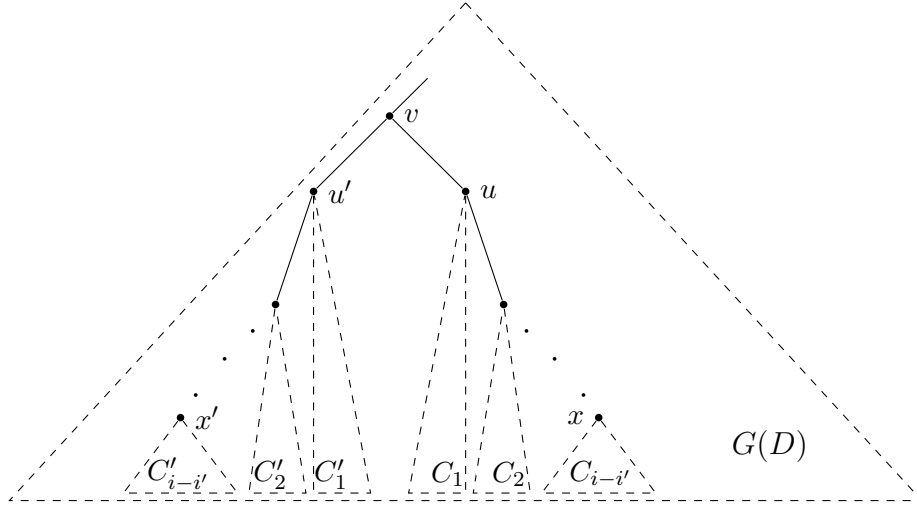

Since $G$ is level greedy, if we decompose $G_{v}$ as in Figure~\ref{Fig:isom_VertRoot}, 
then there is an isomorphism preserving roots between $C_r$ and a subgraph of $C'_r$ for any $r\in\{1,2,\dots,i-i'\}$.
Therefore one can find an injective homomorphism, say 
$f:V\big(G_{u}\big)\longrightarrow V\big(T_{u'}\big)$, which satisfies
$f(u)=u'$, $f(x)=x'$ and $f(e)=e'$. 

The map 
\begin{align*}
F:\mathcal{C}^{e}(k;G)-\mathcal{C}^{v,e}(k;G) &\longrightarrow \mathcal{C}^{e'}(k;T)-\mathcal{C}^{v,e'}(k;T)\\
 w_1\dots w_{k+1} &\longmapsto f(w_1)\dots f(w_{k+1})
\end{align*}
is injective because $f$ is injective. We also define a map 
$$F':\mathcal{C}^{v}(k;G) \longrightarrow \mathcal{C}^{v}(k;T)$$
in a recursive way. Let $W=w_1\dots w_{k+1}\in \mathcal{C}^{v}(k;G),$ 
and let $m$ and $M$ be, respectively, the smallest and largest integers such that 
$w_m=w_M=v$ and $1<m\leq M<k+1$, if there exist such integers. Then we define:
\begin{itemize}
\item  If $v\notin \{w_2,\dots,w_k\}$ (and hence $w_1=w_{k+1}=v$)
and $w_sw_{s+1}\neq e$ for any  $s=1,\dots, k,$ then $F'(W)=w_1\dots w_{k+1}.$
\item If $v\notin \{w_2,\dots,w_k\}$ and $w_sw_{s+1}= e$ for some  $s\in\{1,\dots, k\}$, then
$$F'(W)=w_1f(w_2)\dots f(w_k)w_{k+1}.$$
\item Otherwise we set
$F'(W)=\phi(w_1\dots w_{m-1})F'(w_m\dots w_M)\phi(w_{M+1}\dots w_{k+1}),$
where $\phi(w_1\dots w_{m-1})=f(w_1)\dots f(w_{m-1})$ if $w_sw_{s+1}= e$ for some $s\in\{1,\dots, m-2\}$,
and $\phi(w_1\dots w_{m-1})=w_1\dots w_{m-1}$ otherwise. 
\end{itemize}
In words, we break a walk into pieces separated by visits to vertex $v$. Each piece is either kept the same (if it does not contain $e$) or replaced by its image under the injection $f$ if it contains $e$. Since the decomposition is unique and $f$ is injective, the so constructed map $F'$ is also an injection, and so is its restriction to $\mathcal{C}^{v,e}(k;G)$. This proves inequality~\eqref{eq:through_e} and thus the main inequality.

For even $k \geq 4$, the inequality is strict, since $F$ is not surjective. The degree of $x$ in 
$G$ is strictly less than the degree of $x'$ in $T$ by construction. Hence, there is an edge $e''$ 
incident to $x'$ that does not have a preimage under $F$, and so is any walk starting from $e'$ and uses $e''$.
There is such a closed walk for arbitrary even length larger than $4$.
\end{proof}

\begin{lem}
\label{Lem:Snake_e}
Let $D=((i_{1,1},i_{1,2}),(i_{2,1},\dots,i_{2,k_2}),\dots,(i_{n,1},\dots,i_{n,k_n}))$ be a leveled 
degree sequence of an edge-rooted tree. For some $i$ and $j$ with $1\leq i < L(D)$ and $1<j\leq k_i$, let $B$ be a 
branch of $g^i_j$ in the level greedy tree $G = G(D)$ which does not contain the root. Take the neighbor of $g^i_j$ in $B$ as root of $B$.
Let $T=G-g^i_j\ro(B)+g^i_{j'}\ro(B)$ for some $j'<j$. Then we have
$$
C(k;T)\geq C(k;G).
$$
for any nonnegative integer $k$. For even $k \geq 4$, the inequality is strict.
\end{lem}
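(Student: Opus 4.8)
The plan is to run the proof of Lemma~\ref{Lem:Snake} essentially verbatim, since the two statements differ only in that an edge rather than a vertex is the root and in that $i$ is now allowed to equal $1$. As before I would write $x=g^i_j$, $x'=g^i_{j'}$, $e=g^i_j\ro(B)$ and $e'=g^i_{j'}\ro(B)$, note that $T-e'=G-e$ and hence $C(k;T)-C^{e'}(k;T)=C(k;G)-C^{e}(k;G)$, and thereby reduce everything to the single inequality $C^{e'}(k;T)\geq C^{e}(k;G)$. Whenever $x$ and $x'$ possess a genuine common ancestor vertex---which is automatic for $i>1$ if they descend from the same endpoint of the root edge---the argument of Lemma~\ref{Lem:Snake} applies word for word.

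The only new phenomenon is that $x$ and $x'$ may lie in different components of $G-\ro(G)$, i.e. descend from opposite endpoints of the root edge; this is the only possibility when $i=1$, where necessarily $j=2$, $j'=1$, so that $x=g^1_2$ and $x'=g^1_1$ are the two ends of the root edge. Here the closest common ancestor of Lemma~\ref{Lem:Snake} does not exist as a single vertex, and its role is taken over by the root edge together with its endpoints. The labelling of the edge-rooted greedy tree forces $i_{1,1}\geq i_{1,2}$, and the greedy construction distributes the largest level-$2$ degrees to $g^1_1$ first; consequently the component $G_{g^1_1}$ dominates $G_{g^1_2}$ level by level in the sense underlying Lemma~\ref{Lem:closed_wa_ver}. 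This dominance, applied exactly as the decomposition in Figure~\ref{Fig:isom_VertRoot} was used in the vertex-rooted case, yields a root-preserving injective homomorphism $f\colon V(G_{g^1_2})\to V(T_{g^1_1})$ with $f(g^1_2)=g^1_1$, $f(x)=x'$ and $f(e)=e'$; since $\deg x'\geq\deg x$ in $G$ and $x$ has lost the edge $e$, there is a free child slot at $x'$ into which $B$ is placed, exactly matching the branch moved in forming $T$.

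With $f$ in hand I would reuse the two injections of Lemma~\ref{Lem:Snake}: the map $F$ transporting walks that avoid the meeting point, and the recursively defined $F'$ that breaks a walk at its successive visits to the meeting point and applies $f$ to each piece traversing $e$. The sole adjustment is that a walk is now cut at \emph{uses of the root edge} rather than at visits to a single vertex $v$; here Lemma~\ref{Lem:Adj_uv}, which equates $C_{u,v}$ with $C_{v,u}$ together with their $e$-restricted versions, supplies the directional symmetry of the root edge that keeps the decomposition unambiguous and $F'$ injective. Injectivity of $F$ and $F'$ then gives $C^{e'}(k;T)\geq C^{e}(k;G)$, and strictness for even $k\geq 4$ follows as before: because $\deg x'$ in $T$ exceeds $\deg x$ in $G$, some edge incident to $x'$ has no preimage under $F$, so there is a closed walk through $e'$ of every even length $\geq 4$ outside the image.

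I expect the main obstacle to be verifying that cutting walks at the root edge, rather than at one ancestor vertex, still produces a well-defined and injective recursive map $F'$ when a walk crosses the root edge several times. Once the symmetry furnished by Lemma~\ref{Lem:Adj_uv} is invoked to treat the two traversal directions of the root edge on an equal footing, this collapses to the bookkeeping already carried out in Lemma~\ref{Lem:Snake}.
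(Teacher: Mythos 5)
Your reduction to the single inequality $C^{e'}(k;T)\geq C^{e}(k;G)$, your dispatching of the same-component case to Lemma~\ref{Lem:Snake}, and your construction of the level-preserving injection $f$ with $f(g^1_2)=g^1_1$, $f(x)=x'$, $f(e)=e'$ all match the paper's proof. The gap is in the cross-component case, in your treatment of walks that use the root edge. The piecewise map $F'$ of Lemma~\ref{Lem:Snake} does not survive the change from ``cut at visits to the vertex $v$'' to ``cut at uses of the root edge''. In Lemma~\ref{Lem:Snake}, a piece between consecutive visits to $v$ that contains $e$ lies in $G_u$, and its image is glued back with the cut vertex $v$ kept fixed at both ends (with $f(u)=u'$ adjacent to $v$), so the pieces reassemble into a genuine walk. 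Here, a piece between consecutive uses of the root edge that contains $e$ lies in $G_2$ and starts and ends at $g^1_2$, while its $f$-image starts and ends at $f(g^1_2)=g^1_1$; spliced back between the two adjacent crossings (which end at $g^1_2$ and resume at $g^1_2$), the result is no longer a walk. Lemma~\ref{Lem:Adj_uv} cannot repair this at the level of individual walks: it is a counting identity ($C_{u,v}=C_{v,u}$), not a symmetry that can be applied piecewise inside an injection. The natural repair --- replacing a whole block $[\text{crossing}][P][\text{crossing}]$ by $[f(P)][\text{crossing}][\text{crossing}]$, which does preserve lengths and endpoints --- fails injectivity: if $Q$ is a closed walk at $g^1_1$ lying in $G_1$ that is also of the form $f(Q')$, then the two distinct walks $\ldots[Q_1Q][\text{cross}][P][\text{cross}]\ldots$ and $\ldots[Q_1][\text{cross}][Q'P][\text{cross}]\ldots$ are mapped to the same image.

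This is exactly why the paper's proof of this lemma is not a verbatim rerun of Lemma~\ref{Lem:Snake}. For walks meeting the root edge, the paper abandons injections altogether: it first proves $C^{e'}_{r'}(2l;T)\geq C^{e}_{r}(2l;G)$ (walks \emph{starting} with the root edge) by induction on $l$, decomposing walks according to the pattern of their first returns to level $1$ (the classes $P_i(l)$, $Q_i^j(l)$, $R_i^j(l)$) and writing each count as a sum of products such as $\sum_{S}C^{e'}(S;T_1)\cdot\frac{1}{2}C_{r'}(2l-j-2;T)$ --- the factors $\frac{1}{2}C_{r}$ being where Lemma~\ref{Lem:Adj_uv} actually enters --- and then passes from walks starting with the root edge to walks merely passing through it via the $W_1W_2W_3$ decomposition of Lemma~\ref{Lem:Closed_Pass}. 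This two-stage counting induction is the bulk of the paper's argument, and it is precisely the content missing from your proposal; the assertion that everything ``collapses to the bookkeeping already carried out in Lemma~\ref{Lem:Snake}'' is where the argument breaks down.
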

\begin{proof}
Again, we keep the labels of vertices of $G$ in $T$. For simplicity, we write $x = g^i_j$, $x' = g^i_{j'}$, $e=g^i_j\ro(B),$ $e'=g^i_{j'}\ro(B)$, $r=\ro(G)$ and $r'=\ro(T).$

Let $G_1$ and $G_2$ be the components of $G-r$, and $T_1$ and $T_2$ those of $T-r'$, such that $|V(G_1)|\geq |V(G_2)|$ and $|V(T_1)|\geq |V(T_2)|$.

If $x$ and $x'$ are both vertices of the same component $G_m$, then the proof is exactly the same as that of Lemma~\ref{Lem:Snake}. So from now on, we assume that $x\in V(G_2)$ and $x'\in V(G_1)$.

If $G$ is decomposed as in Figure~\ref{Fig:isom}, then $C_r$ has a copy preserving levels 
in $C'_r$ for any $1\leq r \leq i$. Because of this fact, we know that one can find a level 
preserving injective homomorphism, say $f$, between $G_2$ and 
$T_1$ (which has $G_1$ as a subgraph) which satisfies $f(g^1_2)=g^1_1$, $f(x)=x'$ and $f(e)=e'$.
\begin{figure}[htbp]
\centering
    \begin{tikzpicture}[scale=1]
        \node[fill=black,label=above:{$g^1_1$},circle,inner sep=1pt] (v1) at (-1,0) {};
        \draw[dashed] (v1)--(-1,-4)--(-0.25,-4)--(v1);
        \node[fill=black,circle,inner sep=1pt] (v11) at (-1.5,-1.5) {};
        \draw[dashed] (v11)--(-1.85,-4)--(-1.1,-4)--(v11);
        \node[fill=black,label=right:{$x'$},circle,inner sep=1pt] (v12) at (-2.75,-3) {};
        \node[fill=white,label=right:{$C'_i$},circle,inner sep=0pt] () at (-3.1,-3.75) {};
        \node[fill=white,label=right:{$C'_2$},circle,inner sep=0pt] () at (-1.95,-3.75) {};
        \node[fill=white,label=right:{$C'_1$},circle,inner sep=0pt] () at (-1.15,-3.75) {};
        \draw[dashed] (v12)--(-3.5,-4)--(-2,-4)--(v12);
        \node[fill=black,circle,inner sep=0.5pt] () at (-2.55,-2.7) {};
        \node[fill=black,circle,inner sep=0.5pt] () at (-2.17,-2.22) {};
        \node[fill=black,circle,inner sep=0.5pt] () at (-1.8,-1.85) {};

        \node[fill=black,label=above:{$g^1_2$},circle,inner sep=1pt] (v2) at (1,0) {};
        \draw[dashed] (v2)--(1,-4)--(0.25,-4)--(v2);
        \node[fill=black,circle,inner sep=1pt] (v21) at (1.5,-1.5) {};
        \draw[dashed] (v21)--(1.85,-4)--(1.1,-4)--(v21);
        \node[fill=black,label=left:{$x$},circle,inner sep=1pt] (v22) at (2.75,-3) {};
        \node[fill=white,label=left:{$C_i$},circle,inner sep=0pt] () at (3.1,-3.75) {};
        \node[fill=white,label=left:{$C_2$},circle,inner sep=0pt] () at (1.95,-3.75) {};
        \node[fill=white,label=left:{$C_1$},circle,inner sep=0pt] () at (1.15,-3.75) {};
        \draw[dashed] (v22)--(3.5,-4)--(2,-4)--(v22);
        \node[fill=black,circle,inner sep=0.5pt] () at (2.55,-2.7) {};
        \node[fill=black,circle,inner sep=0.5pt] () at (2.17,-2.22) {};
        \node[fill=black,circle,inner sep=0.5pt] () at (1.8,-1.85) {};

        \draw (v21)--(v2)--(v1)--(v11);
    \end{tikzpicture}
\caption{Decomposition of $G$ in the proof of Lemma~\ref{Lem:Snake_e}}
\label{Fig:isom}
\end{figure}
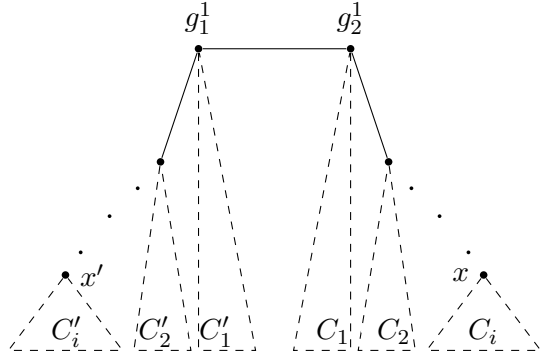

Since we deal with closed walks, we are only interested in even $k=2l.$
We know that 
$$
C(2l;T) - C^{r'}(2l;T)-[C(2l;G) - C^{r}(2l;G)]=C^{e'}(2l;T_1) - C^{e}(2l;G_2)
$$
is nonnegative: as $f$ is injective, so is the map 
\begin{align*}
F:\mathcal{C}^{e}(2l;G_2)&\longrightarrow \mathcal{C}^{e'}(2l;T_1)\\
 w_1\dots w_{k+1}&\longmapsto f(w_1)\dots f(w_{k+1}).
\end{align*}
Since $f$ is level preserving, we can even choose an arbitrary level sequence $S$ of walks 
without two consecutive $1$s and still have
\begin{align}
C^{e'}(S;T_1)&\geq C^{e}(S;G_2), \nonumber \\
C(S,(T_1-B)\cup T_2)&= C(S;G_1\cup (G_2-B)), \label{eq:ls-ineq}
\end{align}
and hence $C(S;T_1\cup T_2)\geq C(S;G_1\cup G_2).$
Now we are left to show that 
\begin{align}
\label{Eq:Sec}
C^{r'}(2l;T)-C^{r}(2l;G)=C^{e',r'}(2l;T)-C^{e,r}(2l;G)\geq 0
\end{align}
for any integer $l\geq 1$. Before that let us first show that 
\begin{equation}
\label{Eq:Inter}
C^{e'}_{r'}(2l;T)\geq C^{e}_{r}(2l;G)
\end{equation}
for any positive integer $l$. Note the subtle difference between $C^{e,r}(2l;G)$ and $C^{e}_{r}(2l;G)$ here: the former counts walks that pass through $r$ \emph{at some stage}, while the latter counts walks that start with $r$. We reason by induction on $l$. For $l=1$  we have
$
C^{e'}_{r'}(2;T)=C^{e}_{r}(2;G)=0.
$
Assume that \eqref{Eq:Inter} holds whenever $l\leq m$ for some $m\geq 1.$ Since 
$
C_{r'}(2l;T)-C^{e'}_{r'}(2l;T)=C_{r}(2l;G)-C^{e}_{r}(2l;G)
$
for all $l$, the induction hypothesis also implies that 
$
C_{r'}(2l;T) \geq C_{r}(2l;G)
$
for all $l\leq m.$

Consider now the case where $l=m+1$. Let $\mathcal{C}^{e'}_{r'}(2l;T)=P_1(l)\cup Q_1(l)\cup R_1(l)$ and 
$\mathcal{C}^{e}_{r}(2l;G)=P_2(l)\cup Q_2(l)\cup R_2(l)$, where the $P_i(l)$'s contain 
walks whose level sequences start with $1,1,1,1$, the $Q_i(l)$'s contain walks whose level sequences start with $1,1,1,2$, and the level sequences of the elements of the $R_i(l)$'s 
start with $1,1,2$. The induction hypothesis implies
\begin{align*}
|P_1(l)|
&=C^{e'}_{r'}(2(l-1);T) \geq C^{e}_{r}(2(l-1);G)=|P_2(l)|.
\end{align*}
It is easy to check that $|Q_1(1)|=|Q_2(1)|=0,$ $|Q_1(2)|=|Q_2(2)|\in \{0,1\}$. For $l\geq 3$, we define  for any $j$ with $2 \leq j \leq 2l-3$ the 
subset $Q_i^j(l)$ of $Q_i(l)$ whose elements have level sequence $(1,1,1,i_1,\dots,i_{2l-2})$, 
where $i_j=i_{j+1}=1$ and $(i_s,i_{s+1})\neq(1,1)$ for $s = 1,\ldots,j-1$. Finally, 
$$Q_i^{2l-2}(l)=Q_i(l)-\bigcup_{j=2}^{2l-3} Q_i^j(l)$$
is the subset of $Q_i(l)$ whose elements 
have level sequence $(1,1,1,i_1,\dots,i_{2l-2})$, where $i_1=2$ and $(1,1)\neq (i_s,i_{s+1})$ for 
$s=1,\dots,2l-3.$ Set 
\begin{align*}
\mathbb{S}^1_j=\{(1,i_1,\dots,i_{j-1},1): i_1 = i_{j-1} = 2, (1,1)\neq(i_s,i_{s+1})\text{ for } s\in\{1,\dots,j-2\} \}.
\end{align*}
Now we decompose walks in $Q_1^j$ and $Q_2^j$: any such walk consists of two steps along the edge root (forwards and backwards), then continues to higher levels and returns to the first level after $j$ steps (possibly earlier as well, but without ever using the edge root). We call this part $U_1$; its level sequence lies in $\mathbb{S}^1_j$. Thereafter, the walk continues for another $2l-j-2$ steps, starting with the edge root; this part is called $U_2$. Since we know that a walk in  $Q_1^j$ has to pass through $e'$, we have the following possibilities:
\begin{itemize}
\item The walk $U_1$ uses $e'$ (which means that it lies entirely in $T_1$), the walk $U_2$ is arbitrary.
\item The walk $U_1$ does not use $e'$, but stays in $T_1$ (thus it lies in $T_1 - B$), the walk $U_2$ contains $e'$. 
\item The walk $U_1$ lies in $T_2$, thus it does not use $e'$. Then the walk $U_2$ has to contain $e'$.
\end{itemize}
For $Q_2^j$, there are three analogous possibilities. Making use of this decomposition, Lemma~\ref{Lem:Adj_uv},~\eqref{eq:ls-ineq} and the induction hypothesis, we obtain
\begin{align*}
&|Q_1^j(l)|\\
&=\sum_{S\in\mathbb{S}^1_j} C^{e'}(S;T_1) C_{\ro(T_1),\ro(T_2)}(2l-j-2;T)+\sum_{S\in\mathbb{S}^1_j}C(S;T_1-B) C^{e'}_{\ro(T_1),\ro(T_2)}(2l-j-2;T)\\
&\qquad+\sum_{S\in\mathbb{S}^1_j} C(S;T_2) C^{e'}_{\ro(T_2),\ro(T_1)}(2l-j-2;T)\\
&=\sum_{S\in\mathbb{S}^1_j} C^{e'}(S;T_1) \cdot \frac{1}{2} C_{r'}(2l-j-2;T)+\sum_{S\in\mathbb{S}^1_j}\left(C(S;T_1-B)+C(S;T_2)\right) \cdot \frac{1}{2} C^{e'}_{r'}(2l-j-2;T)\\
&\geq\sum_{S\in\mathbb{S}^1_j} C^{e}(S;G_2) \cdot \frac{1}{2} C_{r}(2l-j-2;G)+\sum_{S\in\mathbb{S}^1_j}\left(C(S;G_1)+C(S;G_2-B)\right) \cdot \frac{1}{2} C^{e}_{r}(2l-j-2;G)\\
&=|Q_2^j(l)|
\end{align*}
for all $j$ such that $2 \leq j \leq 2l-3$. For $j = 2l-2$, walk $U_2$ is empty, so we have
$$
|Q_1^{2l-2}(l)| =\sum_{S\in\mathbb{S}^1_j} C^{e'}(S;T_1) \geq \sum_{S\in\mathbb{S}^1_j} C^{e}(S;G_2) = |Q_2^{2l-2}(l)|.
$$
We conclude with the third subclass of walks whose level sequences start with $1,1,2$.
For any $j$ with $2 \leq j \leq 2l-2$, let $R_i^j(l)$ be the subset of $R_i(l)$ whose elements have level sequence $(1,1,i_1,\dots,i_{2l-1})$, 
where $i_j=1$ and $1\notin\{i_1,\dots,i_{j-1}\}$. The case that $j=2l-1$ is not interesting since it does 
not correspond to any closed walk. We decompose $R_1^j(l)$ and $R_2^j(l)$ in a similar way as we decomposed $Q_1^j(l)$ and $Q_2^j(l)$. Define 
$$\mathbb{S}^2_j=\{(1,i_1,\dots,i_{j-1},1): 1\notin \{i_1,\dots,i_{j-1}\} \}.$$
A walk in $R_1^j(l)$ (or $R_2^j(l)$) consists of a step along the edge root, then moves to higher levels and only returns to the first level after $j$ steps. This part of $j$ steps has a level sequence in $\mathbb{S}^2_j$, the rest forms a closed walk starting with the edge root. Dividing into three cases again, depending on which part contains $e'$ ($e$, respectively), we obtain
\begin{align*}
|R_1^j(l)|
&=\sum_{S\in\mathbb{S}^2_j} C^{e'}(S;T_1) C_{\ro(T_2),\ro(T_1)}(2l-j;T)+\sum_{S\in\mathbb{S}^2_j} C(S;T_1-B) C^{e'}_{\ro(T_2),\ro(T_1)}(2l-j;T)\\
&\qquad+\sum_{S\in\mathbb{S}^2_j} C(S;T_2) C^{e'}_{\ro(T_1),\ro(T_2)}(2l-j;T)\\
&=\sum_{S\in\mathbb{S}^2_j} C^{e'}(S;T_1) \cdot \frac{1}{2} C_{r'}(2l-j;T)+\sum_{S\in\mathbb{S}^2_j}\left(C(S;T_1-B)+C(S;T_2)\right)\cdot \frac{1}{2}C^{e'}_{r'}(2l-j;T)\\
&\geq\sum_{S\in\mathbb{S}^2_j}C^{e}(S;G_2) \cdot \frac{1}{2} C_{r}(2l-j;G)+\sum_{S\in\mathbb{S}^2_j}\left(C(S;G_1)+C(S;G_2-B)\right) \cdot \frac{1}{2} C^{e}_{r}(2l-j;G)\\
&=|R_2^j(l)|.
\end{align*}
This completes the proof of \eqref{Eq:Inter}. We now proceed to the proof of \eqref{Eq:Sec}, making use of a similar argument as in Lemma~\ref{Lem:Closed_Pass}.
Any element, say $W$, in $\mathcal{C}^{e',r'}(2l;T)$ or $\mathcal{C}^{e,r}(2l;G)$
has a unique decomposition 
\begin{equation}
\label{Eq:UniqDec}
W=W_1W_2W_3,
\end{equation}
where $W_2$ is a closed walk that starts with the edge root and is chosen to have maximal length and 
$W'=W_1W_3$ forms a closed walk which never uses the edge root, but passes at least once through one of its ends (unless it is empty). The decomposition \eqref{Eq:UniqDec} is unique (as it was explained in the proof of Lemma~\ref{Lem:Closed_Pass}). Now let $$\mathbb{S}^3_j=\{(i_1,\dots,i_{j+1}): i_s=1\text{ for some }1\leq s \leq j+1\}.$$ 
The walk $W'$ has a level sequence in $\mathbb{S}^3_j$ for some $j$. Again, there are three possibilities for a walk in $\mathcal{C}^{e',r'}(2l;T)$:
\begin{itemize}
\item The walk $W'$ contains $e'$, and thus lies entirely in $T_1$, and $W_2$ is arbitrary.
\item The walk $W'$ does not contain $e'$, but still lies in $T_1$ (thus entirely in $T_1 - B$), and $W_2$ uses $e'$.
\item The walk $W'$ lies in $T_2$, thus does not use $e'$. Then $W_2$ has to use $e'$.
\end{itemize}
There are three analogous possibilities for $\mathcal{C}^{e,r}(2l;G)$. We obtain
\begin{align*}
&C^{e',r'}(2l;T) \\
&=\sum_{j=0}^{2l-2}\sum_{S\in\mathbb{S}^3_j} C^{e'}(S;T_1) C_{\ro(T_1),\ro(T_2)}(2l-j;T)+C(S;T_1-B) C^{e'}_{\ro(T_1),\ro(T_2)}(2l-j;T)\\
&\qquad+C(S;T_2) C^{e'}_{\ro(T_2),\ro(T_1)}(2l-j;T)\\
&=\sum_{j=0}^{2l-2}\sum_{S\in\mathbb{S}^3_j} C^{e'}(S;T_1) \cdot \frac{1}{2} C_{r'}(2l-j;T)+\left(C(S;T_1-B)+ C(S;T_2)\right) \cdot \frac{1}{2} C^{e'}_{r'}(2l-j;T)\\
&\geq\sum_{j=0}^{2l-2}\sum_{S\in\mathbb{S}^3_j} C^{e}(S;G_2) \cdot \frac{1}{2} C_{r}(2l-j;G)+\left(C(S;G_1)+ C(S;G_2-B)\right) \cdot \frac{1}{2} C^{e}_{r}(2l-j;G)\\
&= C^{e,r'}(2l;G).
\end{align*}
This concludes the proof of~\eqref{Eq:Sec} and thus the theorem. As in the previous lemma, the inequality is strict for even $k \geq 4$ since the map $F$ is not surjective.

\end{proof}

Given two degree sequences $B\preccurlyeq D$ of trees, by iteratively transferring branches, we can transform 
$G(B)$ to become an element of $\mathbb{T}_D$. As seen in the proof of the next theorem, it turns out that 
it is always enough to only use transfers of the type described in the two Lemmas~\ref{Lem:Snake} 
and \ref{Lem:Snake_e} to obtain an element of $\mathbb{T}_D$ from $G(B)$, showing that $G(D)$ has more closed walks of any length than $G(B)$. This parallels analogous results for e.g. the number of subtrees \cite{zhang12} or the spectral radius \cite{biyikoglu2008graphs}.
\begin{thm}
 \label{Th:comp}
Let $D=(d_1,\dots,d_n)$ and $B=(b_1,\dots,b_n)$ be degree sequences of trees of the same order 
such that $B\preccurlyeq D$. Then for any integer $k\geq 0$ we have
$$
C(k;G(B)) \leq C(k;G(D)).
$$
If $B \neq D$ and $k$ is even and $\geq 4$, then the inequality is strict.
\end{thm}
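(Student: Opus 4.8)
The plan is to interpolate between $G(B)$ and $G(D)$ by a sequence of trees, using the branch-shifting operation of Lemmas~\ref{Lem:Snake} and \ref{Lem:Snake_e} as the elementary step. The crucial observation is that such a shift moves a branch from a vertex to another vertex of at least the same degree at the \emph{same} level, thereby lowering one degree by $1$ and raising an at-least-as-large degree by $1$; on the level of degree multisets this is exactly an elementary transfer that strictly increases the sequence in the majorization order. Since $B\preccurlyeq D$, I expect to be able to realise the whole passage from $B$ to $D$ as a finite chain of such transfers, with each individual transfer shown not to decrease the number of closed walks of any length.

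Concretely, I would first construct a chain of tree degree sequences
$$B = B^{(0)} \prec B^{(1)} \prec \cdots \prec B^{(m)} = D,$$
with $B^{(t)}\preccurlyeq D$ throughout, such that for every $t$ the sequence $B^{(t+1)}$ arises from $B^{(t)}$ by a single elementary transfer that can be performed as a branch shift (of the type in Lemma~\ref{Lem:Snake} or Lemma~\ref{Lem:Snake_e}) inside the greedy tree $G(B^{(t)})$. Applying the appropriate lemma to $G(B^{(t)})$ then yields a tree $T^{(t)}\in\mathbb{T}_{B^{(t+1)}}$ with
$$C(k;G(B^{(t)})) \le C(k;T^{(t)}) \qquad (k\ge 0),$$
the inequality being strict for even $k\ge 4$. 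Since $T^{(t)}\in\mathbb{T}_{B^{(t+1)}}$, Theorem~\ref{Th:Main} gives $C(k;T^{(t)})\le C(k;G(B^{(t+1)}))$, and passing to the greedy tree in this way lets me apply the branch-shift lemma again at the next step. Concatenating the resulting inequalities over $t=0,\dots,m-1$ produces $C(k;G(B))\le C(k;G(D))$ for all $k$. For the strictness statement it is enough that at least one step occurs, i.e.\ that $B\ne D$: the very first shift is applied to the greedy tree $G(B)$ and hence already yields $C(k;G(B)) < C(k;G(B^{(1)})) \le \cdots \le C(k;G(D))$ for every even $k\ge 4$.

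The main obstacle is the construction of this chain, i.e.\ proving that the entire majorization gap between $B$ and $D$ can be closed using only \emph{same-level} branch shifts. Two features make this plausible and should be exploited. First, only the degree multiset matters for membership in $\mathbb{T}_{B^{(t+1)}}$, so I am free to let the required transfer ``$v\mapsto v+1$, $w\mapsto w-1$'' be realised by \emph{any} vertex of degree $v$ together with \emph{any} vertex of degree $w$, provided both sit at a common level, with the degree-$v$ vertex at an earlier index and the degree-$w$ vertex retaining a movable branch. Second, in a greedy tree the degrees occurring at a fixed level form a contiguous block of the sorted degree sequence, which strongly constrains where a given value can appear. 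I would argue that whenever $B^{(t)}\ne D$ there is a value that needs to be raised and a value that needs to be lowered sharing a common level (handling the root level, where Lemma~\ref{Lem:Snake} does not apply, via the edge-rooted variant Lemma~\ref{Lem:Snake_e}), and that the associated transfer keeps the sequence $\preccurlyeq D$. Verifying that such a level always exists, together with the feasibility conditions of the shifting lemmas (a non-leaf donor not at the last level, and an earlier recipient at the same level), is the delicate part; the majorization machinery of Section~\ref{Sec:p} is what I would use to bookkeep the transfers and to certify that each step genuinely decreases the distance to $D$.
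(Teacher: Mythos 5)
Your overall architecture is the same as the paper's: interpolate from $B$ to $D$ by unit transfers, realize each transfer as a same-level branch shift (Lemma~\ref{Lem:Snake} or Lemma~\ref{Lem:Snake_e}), return to a greedy tree via Theorem~\ref{Th:Main} (the paper uses Theorem~\ref{Thm:Main_ver_root} resp.\ Theorem~\ref{Th:Main_closed_wa_edge}), and extract strictness from the first shift. The problem is that the step you defer as ``the delicate part'' --- the construction of the chain --- \emph{is} the proof; everything else is quoting earlier results. Two concrete ingredients are missing. The first is the device that makes ``donor and recipient on a common level'' automatic: the paper picks an arbitrary vertex $u$ of degree $b_l$ and an arbitrary vertex $v$ of degree $b_m$ and then \emph{re-roots} $G(B)$ at the middle vertex or middle edge of the path joining $u$ and $v$ (according to the parity of its length). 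Since a greedy tree is level greedy with respect to \emph{every} choice of vertex or edge root (the semi-regular property of \cite{sze11}, already invoked in the proof of Theorem~\ref{Th:Main}), this places $u$ and $v$ on the same level with $u$ at the smaller index (as $\deg u\ge\deg v$), so one of the two shifting lemmas always applies. Without re-rooting, your hope that the needed pair shares a level under some fixed rooting is false in general: in $G\big((3,2,2,2,1,1,1)\big)$ the degree-$3$ vertex is alone on level $1$ and the degree-$2$ vertices sit on level $2$, yet any transfer raising the maximum degree (e.g.\ towards $D=(4,2,2,1,1,1,1)$) must move a branch between exactly such a pair.

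The second missing ingredient is the transfer rule itself, and here the bookkeeping you gesture at (``certify that each step keeps $B^{(t)}\preccurlyeq D$ and decreases the distance to $D$'') is genuinely subtle --- indeed even the paper's own rule fails to preserve your invariant. The paper moves a unit from the \emph{last} differing position to the \emph{first} differing position; applied to $B=(4,4,2,2,1,\ldots,1)\preccurlyeq D=(5,3,3,1,\ldots,1)$ (six, respectively seven, trailing ones, $n=10$) this yields $B_1=(5,4,2,1,\ldots,1)$, which is \emph{not} majorized by $D$; in fact $D\preccurlyeq B_1$, and a direct count gives $\M_6(G(B_1))=486>450=\M_6(G(D))$, so no walk-monotone chain from $B_1$ to $D$ exists at all and the iteration cannot terminate at $D$. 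A correct execution of your plan therefore needs a more careful rule --- for instance, raise the first entry with $b_l<d_l$ and lower the entry at the \emph{end of the block of equal values} containing the first entry with $b_i>d_i$; one then verifies that the result is non-increasing, is still $\preccurlyeq D$, that $\sum_i|b_i-d_i|$ drops by $2$, and that the recipient's degree is at least the donor's, so that the shift is admissible after re-rooting. None of this verification is in your write-up, so what you have is a correct strategic outline (essentially the paper's) with the central combinatorial lemma --- the one place where real care is needed --- left unproved.
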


\begin{proof}
The statement is obvious for $B = D$. From now, we assume that there exists some $i_0$ such that 
$b_{i_0}\neq d_{i_0}$. Since
\begin{equation}
\label{Eq:Mjoo}
\sum_{i=1}^nb_i=\sum_{i=1}^nd_i,
\end{equation}
we know that the set $\{i  : d_i\neq b_i\}$ must have at least two elements. Let 
$l=\min \{i  : d_i\neq b_i\}$ and $m=\max \{i  : d_i\neq b_i\}.$ We must have $b_{l}< d_{l}$,  
$b_{m}> d_m$ and hence $b_{l-1}=d_{l-1}\geq d_l\geq b_l+1$ and $b_{m+1}=d_{m+1}\leq d_m\leq b_m-1$.
Therefore, $B_1 = (b_1,\dots,b_{l-1},b_l+1,b_{l+1},\dots,b_{m-1},b_m-1,b_{m+1},\dots,b_n)$
is a valid degree sequence. It is easy to see that $B\preccurlyeq B_1.$ Consider two vertices $u$ 
and $v$ in the greedy tree $G(B)$ such that $\deg u=b_l$ and $\deg v=b_m$. 

\medskip

\noindent {\bf Case 1:} The length of the path in $G(B)$ joining $u$ and $v$ is even. Let $w$ be the middle vertex of this path. Consider $G(B)$ as a level greedy tree whose root is $w$. Then $u$ and $v$ are on the same level, say level $h$. We have $u = g_i^h$ and $v = g_j^h$ for some $i < j$.
Let $w = g_r^{h+1}$ be a child of $v = g_j^h$, 
and let $H = G(B)_{w}$ be the branch rooted at $w$.

Consider $T=G(B)-vw+uw$; the degree sequence of $T$ is $B_1$. By Theorem~\ref{Thm:Main_ver_root} 
and Lemma~\ref{Lem:Snake}, it follows that
$$C(k;G(B_1)) \geq C(k;T) \geq C(k;G(B))$$
for all $k \geq 0$. 

\medskip

\noindent {\bf Case 2:} The length of the path in $G(B)$ joining $u$ and $v$ is odd. The 
argument is analogous to the previous case: we choose the middle edge of the path as root and then 
we use Theorem~\ref{Th:Main_closed_wa_edge} and Lemma~\ref{Lem:Snake_e} instead of 
Theorem~\ref{Thm:Main_ver_root} and Lemma~\ref{Lem:Snake}.

\medskip

In either case, we have
$$C(k;G(B_1)) \geq C(k;G(B))$$
for all $k \geq 0$. We repeat this process to obtain a sequence of degree sequences $B_0 = B, B_1,B_2,\ldots,B_r = D$ such that
$
B = B_0 \preccurlyeq B_1 \preccurlyeq \dots \preccurlyeq B_r = D
$
and 
$$
C(k;G(B)) =  C(k;G(B_0)) \leq C(k;G(B_1))\leq \dots \leq C(k;G(B_r)) = C(k;G(D))
$$
for all $k \geq 0$, which proves the theorem.
\end{proof}

Conjecture~\ref{Conj:1} follows as corollary of the two Theorems~\ref{Th:Main} and \ref{Th:comp}: 
The degree sequence of the $n$-vertex Volkmann tree, which is of the form 
$(\Delta,\dots,\Delta,r,1,\dots,1)$ for some $1\leq r<\Delta$, majorizes all possible degree 
sequence of $n$-vertex trees with maximum degree $\Delta$.

More results can be obtained by similar arguments in the same way as Corollaries 5.1 -- 5.5 of 
\cite{zhang12} and Corollaries 29 -- 32 of \cite{Andriantiana2012} are obtained. Let us state some more of these corollaries, which also recover some results that can be found in \cite{zhang2011estrada,du2011estrada2}:
\begin{cor}
For any $n$-vertex tree $T$ and for any $k\geq 0$, $$\M_{k}(S_n)\geq \M_k(T),$$ where $S_n$ 
is the star with $n$ vertices, whose degree sequence is $(n-1,1,\dots,1)$.
\end{cor}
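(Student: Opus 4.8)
The plan is to realize $S_n$ as a greedy tree and then chain together the two main results already established. First I would observe that $S_n$ is precisely the greedy tree $G(D^\ast)$ associated with the degree sequence $D^\ast=(n-1,1,\dots,1)$: the greedy construction (Definition~\ref{Def:gf}) assigns the maximal degree $n-1$ to the root, after which every remaining vertex is a leaf, and the result is exactly the star. Thus it suffices to compare an arbitrary $n$-vertex tree with $G(D^\ast)$.

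Next, let $T$ be an arbitrary $n$-vertex tree and let $B=(b_1,\dots,b_n)$ be its degree sequence, written in non-increasing order. By Theorem~\ref{Th:Main} we have $\M_k(T)=C(k;T)\leq C(k;G(B))=\M_k(G(B))$ for every $k\geq 0$, so the problem reduces to comparing the two greedy trees $G(B)$ and $G(D^\ast)$, which is exactly the setting of Theorem~\ref{Th:comp}.

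The key step is to verify the majorization $B\preccurlyeq D^\ast$. Since $D^\ast$ is already non-increasing, by Remark~\ref{Rem:1} this amounts to checking that $D^\ast$ majorizes the non-increasing sequence $B$, i.e.\ that $\sum_{i=1}^l b_i\leq\sum_{i=1}^l d^\ast_i$ for all $l$. Here I would invoke the two defining features of a tree degree sequence: the degree sum equals $2(n-1)$, and every degree is at least $1$. Consequently $\sum_{i=l+1}^n b_i\geq n-l$, which gives $\sum_{i=1}^l b_i\leq (2n-2)-(n-l)=n+l-2$; and $n+l-2$ is exactly $\sum_{i=1}^l d^\ast_i$ for every $l\geq 1$ (both equal $n-1$ when $l=1$, and the star contributes $n-1$ followed by $1$s thereafter). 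This establishes $B\preccurlyeq D^\ast$.

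Finally, applying Theorem~\ref{Th:comp} to these two degree sequences of trees of order $n$ yields $C(k;G(B))\leq C(k;G(D^\ast))=\M_k(S_n)$, and combining this with the reduction above gives $\M_k(T)\leq\M_k(S_n)$ for all $k\geq 0$. I do not anticipate any genuine obstacle: the argument is a direct application of Theorems~\ref{Th:Main} and~\ref{Th:comp}, and the only point requiring care is the majorization check, which is the single place where the structural properties of tree degree sequences (the fixed edge count and the minimum degree $1$) enter. One could moreover read off strictness for sufficiently large even $k$ (unless $T\cong S_n$) from the strict parts of those two theorems, though the stated inequality only requires the non-strict conclusion.
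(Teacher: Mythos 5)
Your proof is correct and follows exactly the route the paper intends for this corollary: identify $S_n$ as the greedy tree $G(n-1,1,\dots,1)$, reduce to greedy trees via Theorem~\ref{Th:Main}, verify that $(n-1,1,\dots,1)$ majorizes every $n$-vertex tree degree sequence, and conclude with Theorem~\ref{Th:comp}. The majorization check $\sum_{i=1}^l b_i \leq n+l-2$ using the degree sum $2(n-1)$ and minimum degree $1$ is precisely the structural fact the paper relies on, so there is nothing to add.
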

\begin{cor}
Among trees $T$ of order $n$ with $s$ leaves, $\M_k(T)$ 
is maximized by the greedy tree $G(s,2,2,\ldots,2,1,1,\ldots,1)$ (the number of $2$s is $n-s-1$, 
the number of $1$s is $s$) for any $k \geq 0$.
\end{cor}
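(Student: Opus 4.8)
The plan is to combine the paper's two main theorems. For a tree $T$ on $n$ vertices with exactly $s$ leaves, let $D = D(T)$ be its degree sequence. Theorem~\ref{Th:Main} already gives $\M_k(T) = C(k;T) \le C(k;G(D)) = \M_k(G(D))$ for every $k\ge 0$, so it suffices to compare the greedy trees $G(D)$ as $D$ ranges over all admissible degree sequences and exhibit one that dominates. Theorem~\ref{Th:comp} reduces this to a majorization statement: if $D \preccurlyeq D^\ast$ then $\M_k(G(D)) \le \M_k(G(D^\ast))$. Hence the whole corollary reduces to the purely combinatorial claim that
\[
D^\ast = (s, 2, 2, \ldots, 2, 1, 1, \ldots, 1), \qquad \text{with } n-s-1 \text{ twos and } s \text{ ones},
\]
is a valid tree degree sequence with $s$ leaves (its entries are positive, exactly $s$ of them equal $1$, and the sum is $s + 2(n-s-1) + s = 2(n-1)$, so $G(D^\ast)$ exists) and that $D^\ast$ majorizes the degree sequence of \emph{every} $n$-vertex tree with $s$ leaves.

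The core of the argument is the verification that $D \preccurlyeq D^\ast$. First I would sort both sequences non-increasingly and, by Remark~\ref{Rem:1}, reduce $\preccurlyeq$ to ordinary majorization of the sorted sequences. Each tree degree sequence splits into $n-s$ \emph{internal} degrees $c_1 \ge c_2 \ge \cdots \ge c_{n-s} \ge 2$ followed by $s$ ones. The handshake identity $\sum_{i=1}^{n} d_i = 2(n-1)$ yields $\sum_{i=1}^{n}(d_i - 2) = -2$; separating the $s$ leaves (each contributing $-1$) from the internal vertices gives the key identity $\sum_{i=1}^{n-s}(c_i - 2) = s - 2$. Since every summand $c_i - 2$ is nonnegative, each partial sum is bounded by the total, i.e.\ $\sum_{i=1}^{l}(c_i - 2) \le s-2$, equivalently $\sum_{i=1}^{l} c_i \le s + 2(l-1)$ for all $l \le n-s$. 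The right-hand side is precisely the $l$-th partial sum of the internal part of $D^\ast$, and for $l \ge n-s$ the partial sums of $D$ and $D^\ast$ coincide (both have exhausted the internal degrees and then append ones). This establishes the desired majorization.

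Chaining the two bounds gives $\M_k(T) \le \M_k(G(D)) \le \M_k(G(D^\ast))$ for every tree $T$ on $n$ vertices with $s$ leaves and every $k \ge 0$, which is the assertion. The only non-routine step is the majorization claim, and the sketch above shows it rests entirely on the elementary identity $\sum_{i=1}^{n-s}(c_i - 2) = s-2$ together with the nonnegativity of $c_i - 2$; as a sanity check, the $l=1$ case recovers the familiar bound $\Delta \le s$ (the maximum degree of a tree is at most its number of leaves). I expect no genuine difficulty beyond handling the degenerate endpoints $s = 2$ (where $D^\ast$ is the path degree sequence) and $s = n-1$ (where $n-s-1=0$ and $D^\ast$ is the star), which the formula accommodates automatically.
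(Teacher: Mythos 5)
Your proposal is correct and takes exactly the route the paper intends: combine Theorem~\ref{Th:Main} (greedy tree is optimal within its degree sequence) with Theorem~\ref{Th:comp} (majorization of degree sequences preserves the inequality between greedy trees), reducing everything to the claim that $(s,2,\ldots,2,1,\ldots,1)$ majorizes every degree sequence of an $n$-vertex tree with $s$ leaves. Your verification of that majorization via the identity $\sum_{i=1}^{n-s}(c_i-2)=s-2$ is a correct filling-in of the detail the paper leaves implicit.
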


\begin{cor}
Among trees $T$ of order $n$ with independence number $\alpha \geq n/2$ and among all trees $T$ with matching number 
$n-\alpha\leq n/2$, $\M_k(T)$ is maximized by the greedy tree $G(\alpha,2,2,\ldots,2,1,1,\ldots,1)$ 
(the number of $2$s is $n-\alpha-1$, the number of $1$s is $\alpha$) for any $k \geq 0$.
\end{cor}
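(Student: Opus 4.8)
The plan is to reduce this corollary to the two structural results already in hand, namely Theorem~\ref{Th:Main} (which fixes the degree sequence) and Theorem~\ref{Th:comp} (which compares greedy trees whose degree sequences are related by $\preccurlyeq$), exactly in the spirit of the preceding corollaries. First I would record that for every tree the independence number $\alpha$ and the matching number $\mu$ satisfy $\alpha + \mu = n$: this is the Gallai identity together with the König theorem, both applicable since trees are bipartite and (for $n\ge 2$) have no isolated vertices. Hence the two stated constraints coincide---trees with independence number $\alpha$ are precisely the trees with matching number $\mu = n-\alpha$---and, because $\mu \le \lfloor n/2\rfloor$ always holds, the admissible range is exactly $\alpha \ge n/2$. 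Writing $D^{*} = (\alpha, 2,\dots,2,1,\dots,1)$ for the claimed extremal sequence (with $\mu-1 = n-\alpha-1$ twos and $\alpha$ ones), one checks that its entries sum to $2(n-1)$ and that $G(D^{*})$ itself has independence number $\alpha$: the $\alpha$ leaves form an independent set, while matching each degree-$2$ vertex to its pendant leaf and the root to one of its leaf neighbours (which exists since $\alpha \ge n/2$) yields a matching of size $n-\alpha$, so $\alpha(G(D^{*}))=\alpha$ by the Gallai identity. Thus $G(D^{*})$ lies in the class under consideration.

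The heart of the argument is the majorization claim: for every tree $T$ on $n\ge 3$ vertices with independence number $\alpha$, the sorted degree sequence $D_T=(d_1\ge\dots\ge d_n)$ satisfies $D_T \preccurlyeq D^{*}$. By Remark~\ref{Rem:1} this amounts to checking $\sum_{i=1}^{l} d_i \le \sum_{i=1}^{l} d^{*}_i$ for all $l$. For $l \ge \mu = n-\alpha$ the right-hand side equals $n+l-2$, so the inequality is equivalent to $\sum_{i=l+1}^{n} d_i \ge n-l$, which holds trivially because every degree is at least $1$. For $1 \le l \le \mu$ the right-hand side equals $\alpha + 2(l-1)$, so the inequality is equivalent to $\sum_{i=1}^{l}(d_i-2) \le \alpha-2$. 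Here I would invoke the excess identity $\sum_{v}(\deg v - 2) = 2(n-1)-2n = -2$: splitting by degree and writing $s$ for the number of leaves shows that the total positive excess is $\sum_{\deg v\ge 3}(\deg v - 2) = s-2$. Since the sequence is non-increasing, the partial sum $\sum_{i=1}^{l}(d_i-2)$ never exceeds this total positive excess $s-2$. Finally, for $n\ge 3$ the leaves of a tree are pairwise non-adjacent and hence form an independent set, so $s \le \alpha$; combining the two gives $\sum_{i=1}^{l}(d_i-2) \le s-2 \le \alpha-2$, as required.

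With the majorization established the corollary is immediate: for any tree $T$ with independence number $\alpha$, Theorem~\ref{Th:Main} gives $\M_k(T)=C(k;T)\le C(k;G(D_T))$, and $D_T \preccurlyeq D^{*}$ together with Theorem~\ref{Th:comp} gives $C(k;G(D_T)) \le C(k;G(D^{*})) = \M_k(G(D^{*}))$, for every $k\ge 0$. The cases $n\le 2$ are degenerate ($G(D^{*})$ is then forced to be the unique tree) and checked directly.

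I expect the main obstacle to be the majorization step. The subtle point, and the place where this corollary genuinely differs from the corollary for a fixed number of leaves, is that the number of leaves $s$ of $T$ is \emph{not} determined by $\alpha$ and may be strictly smaller, so one cannot simply compare the ``internal'' parts of the two degree sequences. The resolution is precisely the pairing of the inequality $s \le \alpha$ (leaves form an independent set) with the identity tying the concentrated positive excess to $s$; once these two facts are isolated, the partial-sum verification becomes routine.
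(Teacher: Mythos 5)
Your proposal is correct and takes essentially the approach the paper intends: the paper states this corollary without detailed proof, noting that it follows from Theorems~\ref{Th:Main} and~\ref{Th:comp} ``by similar arguments'' to earlier corollaries, and your argument is exactly that reduction made explicit. The majorization step (via the excess identity $\sum_{d_i\ge 3}(d_i-2)=s-2$, the bound $s\le\alpha$, and the trivial tail estimate), together with the check that $G(\alpha,2,\dots,2,1,\dots,1)$ itself has independence number $\alpha$, correctly supplies the details the paper leaves to the reader.
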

$\M_k$ in each of the above corollaries can of course be replaced by $\EE$ or more generally $\E_f$ for any $f$ with nonnegative coefficients in \eqref{Eq:f}. If infinitely many even-indexed coefficients are strictly positive (e.g., for $\EE$), then we even have strict inequality. Moreover, corollaries analogous to Corollary~\ref{cor:specrad} and Corollary~\ref{cor:charpoly} for the spectral radius and the values of the characteristic polynomial also follow easily.

\bibliographystyle{abbrv} 
\bibliography{Andriantiana_Wagner_Spectral_Moments}
\end{document}